\newtheoremstyle{Normal}{}{}{}{}{\bfseries}{:}{.5em}{}
\theoremstyle{Normal}
\newtheorem{theorem}{Theorem}
\newtheorem*{theorem*}{Theorem}
\newtheorem{remark}[theorem]{Remark}
\newtheorem{prop}[theorem]{Proposition}
\newtheorem{lemma}[theorem]{Lemma}
\newtheorem{example}[theorem]{Example}
\newtheorem{cor}[theorem]{Corollary}
\newtheorem{conj}[theorem]{Conjecture}
\newtheorem{definition}[theorem]{Definition}
\newtheorem*{setting*}{Switching, Asymptotically Linear}
\newcommand{\PP}{\mathbb P}
\newcommand{\RR}{\mathbb R}
\newcommand{\ZZ}{\mathbb Z}
\newcommand{\E}{\mathbb E}
\newcommand{\supp}{\operatorname{supp}}
\renewcommand{\emptyset}{\varnothing}
\DeclarePairedDelimiter\abs{\lvert}{\rvert}%
\DeclarePairedDelimiter\norm{\lVert}{\rVert}%
\DeclarePairedDelimiter\ceil{\lceil}{\rceil}%
\DeclarePairedDelimiter\floor{\lfloor}{\rfloor}%
\let\oldabs\abs
\def\abs{\@ifstar{\oldabs}{\oldabs*}}
\let\oldnorm\norm
\def\norm{\@ifstar{\oldnorm}{\oldnorm*}}
\let\oldceil\ceil
\def\ceil{\@ifstar{\oldceil}{\oldceil*}}
\let\oldfloor\floor
\def\floor{\@ifstar{\oldfloor}{\oldfloor*}}
\title{Stability of randomly switching stochastic reaction networks with asymptotically linear transition rates}
\author{Daniele Cappelletti\thanks{\href{mailto:daniele.cappelletti@polito.it}{daniele.cappelletti@polito.it},
support from MUR PRIN grant number 2022XRWY7W.} , Aidan S. Howells\thanks{\href{mailto:aidan.howells@polito.it}{aidan.howells@polito.it},
support from MUR PRIN grant number 2022XRWY7W. Corresponding author.}, and Chuang Xu\thanks{\href{chuangxu@hawaii.edu}{chuangxu@hawaii.edu}, support from Simons Foundation and a start-up funding from the University of Hawai'i at M\={a}noa.} }
\begin{document}

\maketitle

\begin{abstract}
Stochastic reaction networks are mathematical models frequently used in, but not limited to, biochemistry. These models are continuous-time Markov chains whose transition rates depend on certain parameters called \emph{rate constants}, which despite the name may not be constant in real-world applications. In this paper we study how random switching between different stochastic reaction networks with asymptotically linear rate functions affects the stability of the process. We give matrix conditions for both positive recurrence (indeed, exponentially ergodicity) and transience (indeed, evanescence) in both the regime with high switching rates and the regime with low switching rates. We then make use of these conditions to provide examples of processes whose stability behavior changes as the switching rate varies. We also explore what happens in the middle regime where the switching rates are neither high nor low and our theorems do not apply. Specifically, we show by examples that there can be arbitrarily many phase transitions between exponentially ergodicity and evanescence as the switching rate increases.
\end{abstract}

\section{Introduction}\label{sec:Intro}

A Chemical Reaction Network (CRN) is a set of possible chemical species, along with a finite set of chemical transformations that the species can undergo. These models are frequently used in biochemistry both for investigating existing systems and for designing new synthetic biological circuits. Their use also extends to epidemiology, ecology, genetics. It is impossible to cite all the applications present in the scientific literature; we only suggest a few works in the aforementioned directions and some comprehensive books: \cite{gurbuz2022analysis, Gupta_Khammash_2019, khudabukhsh2020survival, swain1984handbook, erdi1989mathematical, anderson2011continuous, ewens2004mathematical, bruno2024analysis}.
We also note that dynamical models of CRNs can be used as an alternative expression for families of polynomial differential equations or for stochastic models such as instances of continuous-time branching processes and Markovian queuing systems.

Deterministic and stochastic mass-action models of CRNs have been studied for over 50 years \cite{Horn_Jackson_1972, Kurtz_1972}. These models are not fully determined by the set of possible reactions, but require an additional ``rate constant" for each reaction, whose chemical interpretation is the propensity of a collection of the reactants to actually undergo the reaction. These rate constants are not a property solely of the reactions themselves, as they are also affected by factors such as temperature \cite[Section~1.6]{Cornish-Bowden_2014}, and the availability of input species which are not modeled and assumed to be in abundance \cite{Goldbeter_Koshland_1981}.

In regimes where the temperature or other relevant factors vary, it may be tempting to assume that the qualitative behavior of the system is well approximated by ignoring the variability of the rate constants, either by taking an average or by separately studying the system in the different regimes. However, recent work suggests that this assumption is dangerous.
For example, consider piecewise-deterministic Markov processes (PDMPs) that take the form of models switching between two linear 2-dimensional ODEs. In \cite{Benaim_LeBorgne_Malrieu_Zitt_2014} they demonstrated that such PDMPs could potentially be stable or unstable, depending only on the transition rates between the two ODEs.
Notably, Example 1.3 in \cite{Benaim_LeBorgne_Malrieu_Zitt_2014} consists of a PDMP that switches between two linear ODEs, each of which can be realized as a deterministic mass-action CRN. Thus, though this was not the point of the paper, it demonstrated that deterministic mass-action CRNs whose rate constants vary stochastically can be either stable or unstable depending only on the rate at which the rate constants vary, and thus the variability of the rate constants cannot be neglected. Similar conclusions could be drawn for deterministically switching models from the previous paper \cite{Balde_Boscain_Mason_2009}, that \cite{Benaim_LeBorgne_Malrieu_Zitt_2014} builds upon. A survey of results on models with deterministic switching between linear ODEs can be found in \cite{Lin_Panos_2009}. PDMPs which switch between nonlinear ODEs have also been studied \cite{Benaim_Strickler_2019}, as have diffusions in random environments \cite{Pinsky_Scheutzow_1992}.

\begin{figure}
\begin{equation*}
\mathcal{R}_1\colon
\begin{tikzcd}
    S_1\arrow[yshift=3]{r}{0.99}&0\arrow[yshift=-3]{l}{1}\\
    S_1\arrow{r}{0.01}&4S_2\\
    S_2\arrow[yshift=3]{r}{0.01}&0\arrow[yshift=-3]{l}{1}\\
    S_2\arrow{r}{0.99}&4S_1
\end{tikzcd}
\qquad\qquad
\mathcal{R}_2\colon
\begin{tikzcd}
    S_1\arrow[yshift=3]{r}{0.01}&0\arrow[yshift=-3]{l}{1}\\
    S_1\arrow{r}{0.99}&4S_2\\
    S_2\arrow[yshift=3]{r}{0.99}&0\arrow[yshift=-3]{l}{1}\\
    S_2\arrow{r}{0.01}&4S_1
\end{tikzcd}
\end{equation*}
\caption{On the left, a chemical reaction network $\mathcal R_1$ with species $S_1$ and $S_2$. Each species can (destructively) produce four of the other, and each species also has associated inflow and degradation reactions. We assume mass-action kinetics. The rate constants of each reaction are written over the reaction arrows. On the right, a second network $\mathcal R_2$, which in this case is just $\mathcal R_1$ but with the roles of the two species reversed. A simple example of a CRN in a stochastic environment is one which evolves either according to $\mathcal R_1$ or $\mathcal R_2$, switching between them at times which are independent of everything else and exponential with parameter $\kappa$ for some fixed $\kappa>0$. We will revisit this model in Example \ref{ex:basic switching transient}.}
\label{fig:intro example}
\end{figure}
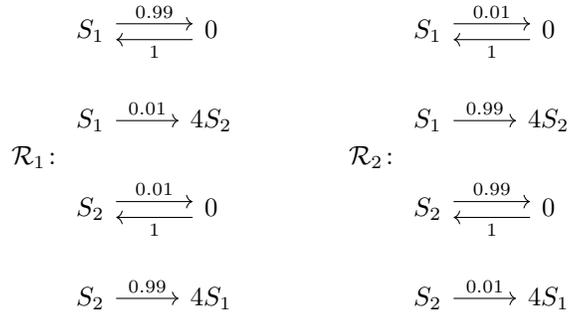

In the present paper, we investigate the stability of a stochastic process that randomly switches between a finite number of stochastic CRNs. We denote a choice of stochastic CRN as an ``environment,'' and we assume the choice changes in time according to a finite continuous-time Markov chain. We further assume that the transition rates of the environment Markov chain are multiplied by a common factor $\kappa>0$ (the bigger $\kappa$ is, the faster the environment switches occur). See Figure \ref{fig:intro example} for an example of this. Our four main theorems give matrix conditions for stability and instability for $\kappa$ large (Theorems \ref{thm:fast-switching-ergodicity} and \ref{thm:fast-switching-transience}, respectively) and similarly for $\kappa$ small (Theorems \ref{thm:slow-switching-ergodicity} and \ref{thm:slow-switching-transience}, respectively). In a sense, our results address the question above of when is it safe to approximate the stability of the system by ignoring the variability of the rate constants. We prove that when the rate constants are changing rapidly enough, the system with averaged rate constants is qualitatively similar, whereas when the rate constants are changing slowly enough (subject to some exceptions; see Example \ref{ex:disjoint species}), the correct approximation is by the constant rate processes rather than the average.

These results naturally open up the possibility of phase transitions, in terms of the stability of the model, as the parameter $\kappa$ varies. We capture this phenomenon in Examples \ref{ex:switching ergodic monomolecular} and \ref{ex:basic switching transient}.

The assumptions in our results match the conditions given in \cite{Benaim_LeBorgne_Malrieu_Zitt_2014} in the PDMP case, although that paper only addresses two of the four possibilities (namely, stability for small $\kappa$ and instability for large $\kappa$; see their Theorem 1.6). Consistently, in their examples they show that (to use our terminology) the system can be stable for small $\kappa$ but unstable for large $\kappa$, whereas we have examples both of this and of the reverse behavior (instability for small $\kappa$ paired with stability for large $\kappa$). There is a further difference with \cite{Benaim_LeBorgne_Malrieu_Zitt_2014} that is worth emphasizing, in addition to their notable restriction to dimension 2: their proof is based on a polar decomposition technique, which allows them to reduce the dimension of the problem by exploiting the fact that the dynamics of the angle is autonomous. This technique does not apply in the context of this paper. Instead, we prove our results from scratch using a novel Lyapunov function construction, which has the added benefit of working in higher dimensions. In fact, our technique could potentially be adapted back to their setting, though we do not address that possibility in this work. A Lyapunov function approach is also used in \cite{Pinsky_Scheutzow_1992}, however, the Lyapunov functions we construct in this paper rely on minor while indispensable changes caused by the switching environments, while their Lyapunov function constructions are based on a state space decomposition.

Above, we have discussed our results in the cases where $\kappa$ is sufficiently large or sufficiently small. We also study the case where $\kappa$ takes some intermediate value. It turns out that in that case, regardless of the behavior at the extremes, the process can undergo an arbitrarily large number of phase transitions between stable and unstable. While similar counterintuitive behavior has been noticed in \cite{Lawley_Mattingly_Reed_2014} in the context of PDMPs switching between linear ODEs, the models in that paper do not arise from CRNs. Moreover, though both here and in \cite{Lawley_Mattingly_Reed_2014} the key first step for showing arbitrarily many phase transitions is to construct a model which is stable for large and small $\kappa$ but is unstable for some intermediate $\kappa$, both the construction and verification are fundamentally different.

We would be remiss not to mention that we are not the first to look at specifically stochastic mass-action CRNs with stochastically varying coefficients. In \cite{Cappelletti_PalMajumder_Wiuf_2021}, they give conditions for such networks to be positive recurrent regardless of $\kappa$. The monomolecular networks from that paper are a subclass of the networks studied in this one, though unlike the present paper they allow for the stochastic environment to have an infinite state space. The major difference in the two papers is that the conditions for positive recurrence in \cite{Cappelletti_PalMajumder_Wiuf_2021} are structural, in contrast with the matrix conditions given here; this means that the results in \cite{Cappelletti_PalMajumder_Wiuf_2021} cannot capture stability transitions of the type studied in this paper.

Finally, it is worth highlighting that the models we study, under the mass-action and at-most-monomolecular assumptions, are ``continuous-time multitype branching processes with immigration in a random environment.'' While we are not aware of any results pertaining the stability of such general models, some related work exists for both discrete-time \cite{Key_1987,Z18} and continuous-time \cite{Kaplan_1973} processes.

In the remainder of this section, we will describe our notational conventions and definitions, as well as the precise setting in which we work. In Sections \ref{sec:Recurrence} and \ref{sec:Transience}, we provide sufficient conditions, in this setting, for models to be either exponentially ergodic or evanescent when $\kappa$ is either sufficiently large or sufficiently small. Section \ref{sec:Examples} provides various examples of applying these theorems, including Examples \ref{ex:switching ergodic monomolecular} and \ref{ex:basic switching transient} in which a stochastic mass-action CRN with stochastically varying rate constants transitions from stable to unstable or vice versa as $\kappa$ increases. Section \ref{sec:Intermediate Switching} deals with the case where $\kappa$ in the intermediate region where none of our previous theorems apply; we do not give a complete classification but show via several examples that the spectrum of possible behaviors is very rich. Section \ref{sec:Open Problems} addresses a possible generalization of Theorem \ref{thm:slow-switching-transience} which is left open by the present work. Lastly, the (already known) Lyapunov function theorems which undergird our proofs are provided in the Appendix, along with a summary of various results pertaining Metzler matrices which are useful for placing in context the hypotheses of our theorems.

\subsection{Notation and Terminology}\label{sec:Notation}

We use the abbreviation \emph{iff} in the standard manner, to mean ``if and only if". We denote the non-negative and positive integers by $\ZZ_{\ge0}$ and $\ZZ_{>0}$, respectively, and for $n\in\ZZ_{\ge0}$ we let $[n]:=\{1,\cdots,n\}$. We will denote the set of non-negative vectors in $\RR^d$ by $\RR^d_{\ge0}$ (that is, $v\in\RR^d_{\ge0}$ iff $v_i\ge0$ for all $i$), and similarly for the set of positive vectors $\RR_{>0}$, the non-positive vectors $\RR_{\le0}$, and the negative vectors $\RR_{<0}$.

If $w\in\RR^n$ is a vector $w=(w_1,\cdots,w_n)$, let $\operatorname{diag}(w)$ denote the $n\times n$ diagonal matrix whose $i$-th diagonal entry is $w_i$. If $v\in\RR^d$ is a vector, let $\norm v_{\ell^1}$ denote the $\ell^1$ norm of $v$ and let $\supp(v)\subseteq[d]$ denote the set of indices of nonzero entries of $v$; that is, $\supp(v):=\{m:v_m\ne0\}$.

We assume general knowledge of continuous time Markov chains (CTMCs) \cite{Norris_1997}. In particular, a Markov chain with unique stationary distribution $\pi$ is said to be \emph{exponentially ergodic} if there exists a constant $\eta>0$ and a function $f$ such that
\[
\norm{P_t(x,\cdot)-\pi(\cdot)}_{\mathrm{TV}}\le f(x) e^{-\eta t},
\]
for all $x$ and all $t$, where $P_t$ is as above and $\norm{\cdot}_{\mathrm{TV}}$ is the total variation norm. If $X(t)$ is a Markov chain on (a subset of) $\RR^d$ and $\norm{X(t)}_{\ell^1}\to\infty$ with positive probability when the process is started in state $x$, then we say $x$ is \emph{evanescent} for the process, consistent with the definition  \cite[Definition~3.1, p.526]{Meyn_Tweedie_1993} for Markov process on a general state space. Like transience and recurrence, evanescence is a class property of a Markov chain. Moreover, evanescence is equivalent to transience for irreducible CTMCs \cite[Section~3.1]{Meyn_Tweedie_1993}. In general, one may think of evanescent states as transient states from which the chain fails to reach a recurrent state with positive probability:

\begin{prop}\label{prop:evanescent}
    Let $X$ be a CTMC on $\mathbb S\subseteq \ZZ^d$, and let $x\in\mathbb S$. Then $x$ is evanescent for $X$ iff when started from state $x$ with positive probability the process never enters a recurrent state.
\begin{proof}[Proof Sketch]
    First, assume that $\lim\limits_{t\to\infty}\norm{X(t)}_{\ell^1}\to\infty$ occurs with positive probability. Given this event, the process almost surely does not enter a recurrent state because if a recurrent state is visited once then it is a.s.~visited infinitely many times \cite{Norris_1997}.
    
    For the reverse implication, suppose that $\liminf\limits_{t\to\infty}\norm{X(t)}_{\ell^1}$ is finite with probability one. Note that for all $R\in\RR$, the event that the process ever enters a recurrent state contains, up to a set of measure zero, the event that $\liminf\limits_{t\to\infty}\norm{X(t)}_{\ell^1}\le R$. Indeed, otherwise we could use the fact that $\{y\in\ZZ^d:\norm y_{\ell^1}\le R\}$ is finite to conclude that there existed a specific transient state $y$ which was visited infinitely often with positive probability, contradicting transience of $y$. Sending $R\to\infty$, the result follows from continuity of probability. 
\end{proof}
\end{prop}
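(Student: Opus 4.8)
The plan is to prove the biconditional by establishing the contrapositive in each direction, using the dichotomy that on a discrete state space $\mathbb S\subseteq\ZZ^d$, the quantity $\liminf_{t\to\infty}\norm{X(t)}_{\ell^1}$ is either finite or infinite, and relating ``staying in a finite region'' to ``visiting some state infinitely often.''

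\begin{proof}[Proof Sketch]
First I would prove the forward direction. Suppose that with positive probability $\norm{X(t)}_{\ell^1}\to\infty$ as $t\to\infty$; call this event $A$. On $A$, for every $R\in\RR$ the process eventually leaves the finite ball $\{y\in\ZZ^d:\norm y_{\ell^1}\le R\}$ and never returns, so in particular no single state is visited infinitely often. The key input is the standard fact for CTMCs that a recurrent state, once visited, is almost surely revisited infinitely often \cite{Norris_1997}. Hence on $A$ the process almost surely never enters a recurrent state; since $A$ has positive probability, it follows that with positive probability the process never enters a recurrent state.

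For the reverse direction I would argue by contraposition: assume it is \emph{not} the case that with positive probability $\norm{X(t)}_{\ell^1}\to\infty$, i.e.\ $\liminf_{t\to\infty}\norm{X(t)}_{\ell^1}$ is finite with probability one, and deduce that the process almost surely enters a recurrent state. For each $R\in\RR$, let $B_R$ be the event $\{\liminf_{t\to\infty}\norm{X(t)}_{\ell^1}\le R\}$, and let $C$ be the event that the process ever enters a recurrent state. The heart of the argument is to show $B_R\subseteq C$ up to a null set. On $B_R$ the process visits the finite set $\{y:\norm y_{\ell^1}\le R\}$ at an unbounded sequence of times, so by finiteness of that set there is at least one state $y$ visited at arbitrarily large times, hence infinitely often. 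If every such $y$ were transient, this would contradict the defining property of a transient state (a.s.\ visited only finitely often), so with probability one some recurrent state is visited; this gives $\PP(B_R\setminus C)=0$. Finally, since $\liminf_{t\to\infty}\norm{X(t)}_{\ell^1}<\infty$ almost surely, the events $B_R$ increase to a full-measure event as $R\to\infty$, so by continuity of probability $\PP(C)=1$, completing the contrapositive.

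The main obstacle, and the only point requiring care, is the measure-theoretic bookkeeping in the reverse direction: making precise the passage from ``the finite ball is visited at unbounded times'' to ``some fixed state is visited infinitely often,'' which relies on the pigeonhole principle applied to a finite set together with the distinction between the almost-sure behavior of transient versus recurrent states. One must also handle the countable union over the (say, integer) values of $R$ and invoke continuity of probability to pass to the limit; these are routine once the inclusion $B_R\subseteq C$ (mod null sets) is secured, which is why I would present the statement as a proof sketch rather than belaboring the null-set manipulations.
\end{proof}
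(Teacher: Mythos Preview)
Your proof is correct and follows essentially the same approach as the paper's: both directions use the same contrapositive structure, the same recurrence fact from \cite{Norris_1997} for the forward direction, and the same pigeonhole-on-a-finite-ball argument followed by continuity of probability as $R\to\infty$ for the reverse direction. Your version is slightly more explicit about the events $B_R$ and $C$ and the null-set bookkeeping, but there is no substantive difference.
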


The following is not a standard definition; we could not find it in the literature even thought it is based on standard concepts of Markov chain convergence.
\begin{definition}
 Let $X$ be a CTMC on $\mathbb S\subseteq \ZZ^d$. We say that $X$ \emph{converges exponentially fast} if
 \begin{enumerate}
     \item $X$ restricted to any closed communicating class is exponentially ergodic;
     \item there exists $\gamma\in\RR_{>0}$ such that for every $x\in S$ we have $\E_x[e^{\gamma \tau}]<\infty$, where $\tau=\inf\{t\in\RR_{>0}\,:\,X(t)$ is in a closed communicating class$\}$.
 \end{enumerate} 
\end{definition}
\begin{remark}
    By the Markov inequality, if $X$ converges exponentially fast we can conclude
    \begin{equation*}
        P_x(\tau>t)=P_x(e^{\gamma\tau}>e^{\gamma t})\leq \E_x[e^{\gamma\tau}]e^{-\gamma t},
    \end{equation*}
    so the probability of not being in a closed communicating class decreases exponentially fast, with a uniform rate $\gamma$ independent of the initial value $x$, and a multiplicative coefficient depending on $x$.
\end{remark}

The objects of study in this paper are \emph{chemical reactions networks}, or \emph{CRNs}. Let $\mathcal S$ be any finite set; we refer to the elements of $\mathcal S$ as \emph{species}. Linear combination of species with coefficients in $\ZZ_{\ge0}$ are called \emph{complexes}. Suppose we have a finite set $\mathcal R$ of \emph{reactions}, each of which is an arrow from one complex (called the \emph{source complex}) to a second complex (called the \emph{product complex}). A CRN is a tuple $(\mathcal S,\mathcal C,\mathcal R)$ where $\mathcal S$ and $\mathcal R$ are as discussed and $\mathcal C$ is the set of all complexes which appear in at least one reaction (either as a source complex or a product complex). CRNs are frequently drawn as a directed graph whose vertices are the complexes and whose edges are the reactions. While strictly speaking complexes are linear combinations over $\ZZ_{\ge0}$ of species, it is standard to view them as elements of $\ZZ_{\ge0}^{\mathcal S}$ by identifying linear combinations with their coefficients, and is even more common to abuse notation further by implicitly fixing an ordering of the species and viewing complexes as elements of $\ZZ_{\ge0}^d$ for $d\in\ZZ_{>0}$ the number of species.

One example CRN is $(\mathcal S_1,\mathcal C_1,\mathcal R_1)$ with species $\mathcal S_1=\{H_2O,H_2,O_2\}$, reaction $\mathcal R_1=\{2H_2+O_2\to 2H_2O\}$, and complexes $\mathcal C_1=\{2H_2+O_2,2H_2O\}$. Here the only source complex is $2H_2+O_2$ and the only product complex is $2H_2O$. We will say a CRN \emph{has at-most-monomolecular reactions} if for each $i$ and each $y\to y'\in\mathcal R$, the vector $y\in\ZZ_{\ge0}^d$ has at most one entry which is one and all other entries are zero. The example $(\mathcal S_1,\mathcal C_1,\mathcal R_1)$ from the beginning of this paragraph does not have at-most-monomolecular reactions because of the existence of a source complex $2H_2+O_2$, identified with the vector $(2,1,0)$; an example of an at-most-monomolecular reaction network would be one consisting only of the two reactions $0\to S_1\to 2S_2$, where $0$ denotes the empty linear combination $0S_1+0S_2$. Reactions with source complex $0$ are referred to as \emph{inflow reactions}.

A CRN $(\mathcal S,\mathcal C,\mathcal R)$ alone is not a dynamical system. There are multiple ways to get a dynamical system associated to a CRN; here we are concerned with specifically CTMC models. To each reaction $y\to y'\in\mathcal R$, assign a function, termed \emph{rate function}, $\lambda_{y\to y'}\colon \ZZ_{\ge0}^d\to\RR_{\geq 0}$. We consider a CTMC with state space $\ZZ_{\ge0}^d$ such that for each $v\in \ZZ^d$, the Markov chain can transition from state $x\in\ZZ_{\ge0}^d$ to state $x+v\in\ZZ_{\ge0}^d$ with transition rate
\[
\mathop{\sum_{y\to y'\in \mathcal R}}\limits_{y'-y=v}\lambda_{y\to y'}(x).
\]

We are particularly interested here in \emph{stochastic mass-action kinetics}: to each reaction $y\to y'\in\mathcal R$, assign a number, or \emph{rate constant}, $\kappa_{y\to y'}\in\RR_{>0}$, and define the rate functions as
$\lambda_{y\to y'}(x):=\kappa_{y\to y'}\prod_{m=1}^d\prod_{j=0}^{y_m-1}(x_m-j)$. Put another way, if the state space $x\in\ZZ_{\ge0}^d$ is interpreted as a vector listing the number of each species, then the rate of reaction $y\to y'$ is directly proportional to the number of ways to select the input species $y$ from the available species $x$, with constant of proportionality $\kappa_{y\to y'}$.

\subsection{Setting}

Throughout this paper we work in the following setting:

\begin{setting*}[SAL]
\makeatletter\def\@currentlabelname{SAL}\makeatother
\label{setting}
    Fix positive integers $n,d$. Let $M_1,\cdots,M_n$ be $d\times d$ matrices. For each $i=1,\cdots,n$, suppose that $M_i$ is associated to a CRN $X^i(t)$ with $d$ species; by ``associated", we mean that
    \begin{align}\label{eq:associated CRN}
        \sum_{y\to y'\in\mathcal R_i}\lambda_{i,y\to y'}(x)(y'_m-y_m)&=(M_ix)_m+o(\norm{x}_{\ell^1})
    \end{align}
    for each $i=1,\cdots,n$ and each $m=1,\cdots,d$ and each $x\in\ZZ^d_{\ge0}$, where $\mathcal R_i$ and $\lambda_{i,y\to y'}:\ZZ^d_{\ge0}\to[0,\infty)$ are the set and rate, respectively, of reactions of $X^i$, and $(M_ix)_m$ is the $m$-th entry of the vector $M_i x$.

    Suppose that $Q=(q_{ij})$ is a $n\times n$ transition rate matrix associated to an irreducible Markov chain on $[n]$, and that $w=(w_1,\cdots,w_n)$ is the unique stationary distribution of $Q$. Let $M=\sum w_iM_i$. Let $\kappa>0$, and notice that $\kappa Q$ is a transition rate matrix with the same stationary distribution as $Q$.

    We consider the Markov chain $Z_\kappa$ on $\ZZ^d_{\ge0}\times[n]$ whose last coordinate evolves according to $\kappa Q$, and whose first $d$ coordinates evolve according to $X^i$ when the last coordinate is in state $i$. More precisely, the only possible transitions are from a state of the form $(x,i)$ to a state of the form $(x',i)$ at rate
    \[
    \sum_{y\to y'\in\mathcal R_i\,:\,y'-y=x'-x}\lambda_{i,y\to y'}(x),
    \]
    and from a state of the form $(x,i)$ to a state of the form $(x,j)$ at rate $\kappa q_{ij}$, where $x, x'\in\ZZ^d_{\ge0}$ and $i,j\in[n]$. We refer to $Z_\kappa$ as the \emph{mixed process} associated to $\kappa Q$ and $X^1,\cdots,X^n$. We will denote by $\mathcal L_\kappa$ the generator of the Markov chain $Z_\kappa$. \hfill //
\end{setting*}

Equation \eqref{eq:associated CRN} says that in the $i$-th CRN, the (leading order) change of species $m$ as a function of the number $x_l$ of molecules of species $l$ is given by $x_l(M_i)_{m,l}$, where $(M_i)_{m,l}$ is the $m,l$-th entry of $M_i$. The $o(\norm{x}_{\ell^1})$ term allows for the existence of constant-rate inflow reactions.

The intended interpretation of the \nameref{setting} Setting  is that there are $n$ CRNs with a combined $d$ species, and which one of the CRNs is active at any given time is decided by a further Markov chain (the stochastic environment) on $[n]$ that evolves freely, with transition rates completely independent from the current species count. Then $w_i$ is the fraction of time which the CRN corresponding to $M_i$ is active, and so $M$ is the matrix giving the average rate of change of each species as a function of each other species.

In principle equation \eqref{eq:associated CRN} allows for the possibility of higher-order reactions whose rates cancel. When we prove theorems about transience we will need to restrict to the case where no such higher-order reactions exist. Our recurrence theorems, by contrast, will not have this restriction (see Example \ref{ex:switching ergodic non-monomolecular}).

Note that the \nameref{setting} Setting does not assume mass-action kinetics, and this is not needed to prove the recurrence theorems (Theorems \ref{thm:fast-switching-ergodicity} and \ref{thm:slow-switching-ergodicity}). In contrast, mass-action kinetics and at-most-monomolecular reactions are assumed to prove the transience theorems (Theorems \ref{thm:fast-switching-transience} and \ref{thm:slow-switching-transience}).

\section{Recurrence}\label{sec:Recurrence}

\subsection{Fast Switching}

Our first theorem gives a condition under which the mixed process converges exponentially fast when the environment transitions quickly enough. The condition, that there exists $v\in\RR^d_{>0}$ such that $vM\in\RR^d_{<0}$, implies by Corollary \ref{cor:one-environment-ergodicity} below that a CRN in the average environment $M$ would converge exponentially fast. Thus Theorem \ref{thm:slow-switching-ergodicity} gives a sense in which the switched model behaves like the averaged model when the switching rate is fast.

\begin{theorem}\label{thm:fast-switching-ergodicity}
Working in the \nameref{setting} Setting, suppose that there exists a vector $v\in\RR^d_{>0}$ such that $vM\in\RR^d_{<0}$. Then as long as $\kappa$ is sufficiently large, the mixed Markov chain $Z_\kappa$ converges exponentially fast.
\end{theorem}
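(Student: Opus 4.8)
The plan is to construct a single Lyapunov function for the mixed chain $Z_\kappa$ from the linear function $V(x)=v\cdot x$ together with an environment-dependent corrector, and then feed the resulting geometric drift inequality into the (already known) Lyapunov theorem from the Appendix. The starting observation is that for a linear test function the error term in \eqref{eq:associated CRN} is harmless: writing $\mathcal L_i$ for the generator of $X^i$, one computes
\[
\mathcal L_i(v\cdot x)=\sum_{m=1}^d v_m\sum_{y\to y'\in\mathcal R_i}\lambda_{i,y\to y'}(x)(y'_m-y_m)=(vM_i)\cdot x+o(\norm{x}_{\ell^1}),
\]
where $vM_i$ denotes the row vector $v$ times the matrix $M_i$. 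Crucially, this requires no control on the growth of the individual rate functions, which is precisely why the recurrence theorems can dispense with the mass-action and at-most-monomolecular hypotheses. Averaging against $w$ gives $\sum_i w_i(vM_i)=vM$, and since $vM\in\RR^d_{<0}$ and $x\ge0$ we obtain the decisive inequality $(vM)\cdot x\le -c\norm{x}_{\ell^1}$ with $c:=\min_l\abs{(vM)_l}>0$.

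First I would correct for the switching. The obstruction is that in a fixed environment $i$ the drift $(vM_i)\cdot x$ need not be negative; only its $w$-average is. To transfer the averaged negativity to $\mathcal L_\kappa$, I would solve, coordinatewise, the Poisson equation
\[
\sum_{j=1}^n q_{ij}u_j(x)=(vM)\cdot x-(vM_i)\cdot x,\qquad i\in[n],
\]
for the environment chain. The right-hand side is orthogonal to $w$ because $\sum_i w_i(vM_i)=vM$, so by irreducibility of $Q$ a solution exists, and since the data are linear in $x$ one may take $u_i(x)=c_i\cdot x$ for suitable vectors $c_i\in\RR^d$. Setting $F(x,i):=v\cdot x+\tfrac1\kappa\,c_i\cdot x$ and computing $\mathcal L_\kappa F$, the $X^i$-part contributes $(vM_i)\cdot x+\tfrac1\kappa(c_iM_i)\cdot x+o(\norm{x}_{\ell^1})$, while the $\kappa Q$-part contributes $\sum_j q_{ij}u_j(x)=(vM)\cdot x-(vM_i)\cdot x$ (the prefactor $\kappa$ of the switching rate cancelling the $1/\kappa$ in $F$); the environment-dependent terms cancel, leaving
\[
\mathcal L_\kappa F(x,i)=(vM)\cdot x+\tfrac1\kappa(c_iM_i)\cdot x+o(\norm{x}_{\ell^1}).
\]

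Next I would take $\kappa$ large and collect the estimates. Bounding $\abs{(c_iM_i)\cdot x}\le C'\norm{x}_{\ell^1}$ uniformly in $i$ and absorbing the $o(\norm{x}_{\ell^1})$ term, for $\kappa>2C'/c$ and $\norm{x}_{\ell^1}$ large one gets $\mathcal L_\kappa F(x,i)\le-\tfrac c4\norm{x}_{\ell^1}$. Choosing $\kappa$ also large enough that $\min_l v_l-\tfrac1\kappa\max_i\norm{c_i}_\infty>0$ makes $F$ comparable to $\norm{x}_{\ell^1}$ on both sides, so that $\mathcal L_\kappa F\le-\epsilon F$ outside a finite set $C$ while $\mathcal L_\kappa F$ stays bounded on $C$; this is the geometric drift inequality $\mathcal L_\kappa F\le-\epsilon F+b\,\mathbf 1_C$. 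Applying the Lyapunov theorem from the Appendix to $F$ (shifted to be bounded below by a positive constant) then gives that $Z_\kappa$ converges exponentially fast, for all $\kappa$ sufficiently large.

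I expect the main obstacle to be the corrector step, which is the conceptual heart of the averaging argument: the Poisson solution $c_i\cdot x$ is exactly the device that turns the $w$-averaged negative drift into a genuinely negative drift for the coupled $(x,i)$ generator. The role of $\kappa$ is transparent in the estimate, since the corrector enters at order $1/\kappa$ and its own contribution $\tfrac1\kappa(c_iM_i)\cdot x$ is the error that must be dominated by the averaged gain $(vM)\cdot x\le-c\norm{x}_{\ell^1}$, which succeeds exactly when $\kappa$ is large. The only genuinely delicate bookkeeping I anticipate is the uniform control, in $i$ and as $\norm{x}_{\ell^1}\to\infty$, of the $o(\norm{x}_{\ell^1})$ terms, and checking that $F$ meets the precise hypotheses (lower bound, properness, non-explosion) required by the Appendix theorem.
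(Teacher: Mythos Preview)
Your proposal is correct and is essentially the paper's own proof. The paper packages your Poisson-equation step as Lemma~\ref{lem:z^m} (with the cosmetic variant that the right-hand side is taken to be $w_i^{-1}n^{-1}(vM)_m-(vM_i)_m$ rather than $(vM)_m-(vM_i)_m$, and positivity of the corrector is arranged by shifting $z^m$ rather than by taking $\kappa$ large), and its Lyapunov function $h_\kappa(x,i)=(v+\kappa^{-1}u^i)\cdot x$ is exactly your $F$; the computation and the appeal to Theorem~\ref{thm:Lyapunov-exponential-ergodicity} then proceed as you describe.
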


\begin{remark}
    In light of Proposition \ref{prop:dec/inc direction}, the existence of a vector $v\in\RR^d_{>0}$ such that $vM\in\RR^d_{<0}$ is equivalent to $M$ being Hurwitz stable (recall that $M$ is Hurwitz stable if all eigenvalues of $M$ have negative real part; see section \ref{sec:lin-alg} for definitions and related discussion). \hfill //
\end{remark}

Before proceeding with the proof, we need a technical lemma which will also be useful for proving Theorem \ref{thm:fast-switching-transience}.

\begin{lemma}\label{lem:z^m}
In the \nameref{setting} Setting, for any $m\in[d]$ and any $v\in\RR^d$ there exists a vector $z^m=(z_1^m,z_2^m,\cdots,z_n^m)\in\RR^n_{>0}$ such that for each $i\in[n]$ we have
\begin{align*}
    \left(\sum_{j\ne i}q_{ij}(z^m_j-z^m_i)\right)+(vM_i)_m&=w_i^{-1}n^{-1}(vM)_m,
\end{align*}
where $(vM_i)_m$ and $(vM)_m$ denote the $m$-th entries of the vectors $vM_i$ and $vM$, respectively.

\begin{proof}
Fix $m$. Let $\psi=(\psi_1,\cdots,\psi_n)$ denote the vector whose $i$-th entry is the $m$-th entry of the (length $d$) vector $v(n^{-1}M-w_iM_i)$; that is,
\begin{align*}
    \psi_i:=(v(n^{-1}M-w_iM_i))_m.
\end{align*}
We claim that there exists a vector $z^m$ such that
\begin{align}\label{eq:diag(w)Qz=psi}
    \operatorname{diag}(w)Qz^m=\psi.
\end{align}
By the Fundamental Theorem of Linear Algebra, the image space of the matrix $\operatorname{diag}(w)Q$ is exactly the set of vectors orthogonal to the kernel of $(\operatorname{diag}(w)Q)^\intercal=Q^\intercal \operatorname{diag}(w)$. But since the Markov chain associated to $Q$ is irreducible, its stationary distribution $w$ is unique. It follows that, if $\widehat w$ is a vector with $\widehat wQ=0$, then $\widehat w$ is a scalar multiple of $w$.
But if $0=(\operatorname{diag}(w)Q)^\intercal\rho$ for some $\rho=(\rho_1,\cdots,\rho_n)^\intercal$ then
$
    0=\operatorname{diag}(w)\rho Q,
$
so in particular taking $\widehat w=\operatorname{diag}(w)\rho$ we see that $\rho$ must be a scalar multiple of the all-ones vector $\mathbf 1$. All this is to say that the kernel of $(\operatorname{diag}(w)Q)^\intercal$ is exactly the span of $\mathbf 1$, and therefore $\psi$ is in the image of $\operatorname{diag}(w)Q$ iff $\psi$ is orthogonal to $\mathbf 1$.

The scalar product between $\psi$ and $\mathbf 1$ is
\begin{align*}
    \psi\cdot\mathbf1
    &=\sum_{i=1}^n (v(n^{-1}M-w_iM_i))_m\\
    &=\left(\sum_{i=1}^nv(n^{-1}M-w_iM_i)\right)_m\\
    &=\left(v\left(\sum_{i=1}^n n^{-1}M-w_iM_i\right)\right)_m\\
    &=\left(v\left(M-M\right)\right)_m\\
    &=0.
\end{align*}
So $\psi$ is indeed orthogonal to $\mathbf 1$, and hence $\psi$ is the image of some vector $z^m$ under $\operatorname{diag}(w)Q$. Let $z^m$ be such. Notice that the $i$-th entry of $\operatorname{diag}(w)Q z^m$ is
\begin{align*}
    (\operatorname{diag}(w)Q z^m)_i
    &=\sum_{j=1}^n w_i q_{ij}z^m_j\\
    &=\left(\sum_{j\ne i} w_i q_{ij}z^m_j\right)+w_i q_{ii}z^m_i\\
    &=\left(\sum_{j\ne i} w_i q_{ij}z^m_j\right)-\sum_{j\ne i}w_i q_{ij} z^m_i\\
    &=\sum_{j\ne i} w_i q_{ij}(z^m_j-z^m_i).
\end{align*}
So equation \eqref{eq:diag(w)Qz=psi} tells us that for each $i$, we have
\begin{align*}
    \sum_{j\ne i} w_i q_{ij}(z^m_j-z^m_i)
    &=(v(n^{-1}M-w_iM_i))_m\\
    \sum_{j\ne i} w_i q_{ij}(z^m_j-z^m_i)+w_i(vM_i)_m&=n^{-1}(vM)_m\\
    \sum_{j\ne i}  q_{ij}(z^m_j-z^m_i)+(vM_i)_m&=w_i^{-1}n^{-1}(vM)_m.
\end{align*}
But this is exactly the equation we wanted $z^m$ to satisfy. \textit{A priori}, the vector $z^m$ need not have all positive entries. However, notice that if we add the same real number to every entry of $z^m$ then the equality above will be preserved, since the left-hand side is only a function of the differences $z^m_j-z^m_i$. Therefore, we may take every entry of $z^m$ to be positive without loss of generality.
\end{proof}
\end{lemma}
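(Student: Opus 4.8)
The plan is to recognize the $n$ equations indexed by $i$ as a single linear system and solve it via the Fredholm alternative. First I would rewrite the left-hand side: since $q_{ii}=-\sum_{j\ne i}q_{ij}$, the sum $\sum_{j\ne i}q_{ij}(z^m_j-z^m_i)$ equals $(Qz^m)_i$, so the target equations become $(Qz^m)_i+(vM_i)_m=w_i^{-1}n^{-1}(vM)_m$ for every $i$. Multiplying the $i$-th equation through by $w_i>0$ converts this into the compact matrix equation $\operatorname{diag}(w)Qz^m=\psi$, where $\psi_i:=(v(n^{-1}M-w_iM_i))_m$. Thus the whole lemma reduces to showing that $\psi$ lies in the image of $\operatorname{diag}(w)Q$ and that the solution can be taken strictly positive.

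For solvability I would invoke the orthogonal decomposition $\operatorname{im}(A)=(\ker A^\intercal)^\perp$ with $A=\operatorname{diag}(w)Q$. The crux is to identify $\ker A^\intercal$. If $A^\intercal\rho=Q^\intercal\operatorname{diag}(w)\rho=0$, then $u:=\operatorname{diag}(w)\rho$ satisfies $u^\intercal Q=0$, i.e.\ $u$ is a left null vector of $Q$. Because $Q$ is irreducible, its stationary distribution $w$ is unique up to scaling, so $u$ must be a scalar multiple of $w$, forcing $\rho=\operatorname{diag}(w)^{-1}u$ to be a scalar multiple of the all-ones vector $\mathbf 1$. Hence $\ker A^\intercal=\operatorname{span}(\mathbf 1)$, and $\psi$ is in the image of $A$ iff $\psi\cdot\mathbf 1=0$.

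Checking this orthogonality is a direct computation: summing $\psi_i$ over $i$ and pulling the sum inside the $m$-th coordinate gives $(v(\sum_i n^{-1}M-\sum_i w_iM_i))_m=(v(M-M))_m=0$, using that there are $n$ environments (so $\sum_i n^{-1}M=M$) and the defining relation $M=\sum_i w_iM_i$. This secures the existence of some solution $z^m$. To upgrade to a strictly positive solution I would then exploit that the system only pins down $z^m$ up to a constant shift: since the rows of $Q$ sum to zero we have $Q\mathbf 1=0$, so replacing $z^m$ by $z^m+c\mathbf 1$ leaves $\operatorname{diag}(w)Qz^m$ unchanged for every $c\in\RR$, and choosing $c$ large enough makes all entries positive.

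I expect the main obstacle to be the transpose/kernel bookkeeping in the second step, namely correctly translating ``the left null space of $Q$ is spanned by $w$'' into ``the kernel of $(\operatorname{diag}(w)Q)^\intercal$ is spanned by $\mathbf 1$''; it is easy to conflate left and right null spaces or to misplace the diagonal weight $\operatorname{diag}(w)$ when passing to the transpose. Everything else — the reduction to $\operatorname{diag}(w)Qz^m=\psi$, the orthogonality check, and the positivity shift — should be routine once that identification is pinned down.
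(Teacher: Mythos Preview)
Your proposal is correct and follows essentially the same route as the paper: reduce the $n$ equations to $\operatorname{diag}(w)Qz^m=\psi$ with $\psi_i=(v(n^{-1}M-w_iM_i))_m$, apply the Fredholm alternative by showing $\ker(\operatorname{diag}(w)Q)^\intercal=\operatorname{span}(\mathbf 1)$ via irreducibility of $Q$, check $\psi\cdot\mathbf 1=0$ from $M=\sum_i w_iM_i$, and then shift by a constant multiple of $\mathbf 1$ to obtain positivity. The only cosmetic difference is that you derive the matrix equation first and verify solvability afterward, whereas the paper states the system first and then unwinds it back to the original equation.
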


\begin{remark}
    The proof of Lemma \ref{lem:z^m} just given is not constructive, and we have not attempted to find a constructive proof in general. However, in the special case where $n$, the number of environments, is $2$, there is a simple construction worth mentioning. Specifically, in that case for $i=1,2$ we can define $z^m_i$ via $2w_1w_2 z^m_i=(w_ivM_i)_m$. \hfill //
\end{remark}

\begin{proof}[Proof of Theorem \ref{thm:fast-switching-ergodicity}]
Define $h_\kappa:\ZZ^d_{\ge0}\times[n]\to\RR_{\ge0}$ via $h_\kappa(x,i) = (v+\kappa^{-1} u^i)\cdot x$ for some vector $u^i\in\mathbb{R}^d_{\ge0}$ to be picked later. Then
\begin{align*}
    \mathcal L_\kappa h_\kappa(x,i)
    &=\sum_{j\ne i}\kappa q_{ij}\kappa^{-1}(u^j-u^i)\cdot x
    +\sum_{y\to y'\in\mathcal R_i}\lambda_{i,y\to y'}(x)(v+\kappa^{-1}u^i)\cdot (y'-y)\\
    &=\sum_{j\ne i} q_{ij}(u^j-u^i)\cdot x
    +\sum_{m=1}^d (v_m+\kappa^{-1}u^i_m) \sum_{y\to y'\in\mathcal R_i}\lambda_{i,y\to y'}(x)(y'_m-y_m)\\
    &=\sum_{j\ne i}q_{ij}(u^j-u^i)\cdot x
    +\sum_{m=1}^d (v_m+\kappa^{-1}u^i_m)(M_ix)_m+o(\norm x_{\ell^1})\\
    &=\sum_{j\ne i}q_{ij}(u^j-u^i)\cdot x
    +(v+\kappa^{-1}u^i)\cdot (M_ix)+o(\norm x_{\ell^1})\\
    &=\left(\sum_{j\ne i} q_{ij}(u^j-u^i)
    +(v+\kappa^{-1}u^i)M_i\right)\cdot x+o(\norm x_{\ell^1})
\end{align*}
By Theorem \ref{thm:Lyapunov-exponential-ergodicity}, it suffices to show for each $i\in[n]$ the existence of $u^i\in\mathbb{R}^d_{\ge0}$  such that 
\begin{equation*}
\sum_{j\neq i} q_{ij} (u^j - u^i) + (v+\kappa^{-1}u^i)M_i <0
\end{equation*}
But by Lemma \ref{lem:z^m}, there exists $u^i\in\mathbb R^d_{>0}$ such that 
\[
    \sum_{j\neq i} q_{ij} (u^j - u^i) + vM_i = w_i^{-1}n^{-1}vM;
\]
specifically, take $u^i_m=z^m_i$ where $z^m\in\RR^n_{>0}$ is the vector given by Lemma \ref{lem:z^m} for $m\in[d]$. Hence we obtain exponential ergodicity of the process randomly switching among $n$ environments for large $\kappa$.
\end{proof}

\subsection{Slow Switching}

While the previous theorem shows that the switched model inherits stability from the averaged model when the switching rate is large enough, this next theorem shows that the switched model inherits stability from each individual environment when the switching rate is slow.

\begin{theorem}\label{thm:slow-switching-ergodicity}
    Working in the \nameref{setting} Setting, suppose that for each $i=1,\cdots,n$ there exists a vector $v^i\in\RR^d_{>0}$ such that $v^iM_i\in\RR^d_{<0}$. Then as long as $\kappa$ is sufficiently small, the mixed Markov chain $Z_\kappa$ converges exponentially fast.
\end{theorem}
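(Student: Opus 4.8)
The plan is to construct a Lyapunov function adapted to the slow-switching regime, mirroring the structure of the fast-switching proof but with the roles reversed: instead of a single global direction $v$ perturbed by small environment-dependent corrections, I will use genuinely different directions $v^i$ in each environment, glued together so that the generator $\mathcal L_\kappa$ applied to the resulting function is negative for small $\kappa$. Concretely, for each $i$ I have a vector $v^i\in\RR^d_{>0}$ with $v^iM_i\in\RR^d_{<0}$, so in isolation the $i$-th CRN contracts along $v^i$. I would define $h_\kappa(x,i)=(v^i+\kappa\, u^i)\cdot x$ (or more simply $h(x,i)=v^i\cdot x$ to leading order) for correction vectors $u^i$ to be chosen, and compute $\mathcal L_\kappa h_\kappa(x,i)$ exactly as in the proof of Theorem \ref{thm:fast-switching-ergodicity}.

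First I would carry out that generator computation. The switching part contributes $\sum_{j\ne i}\kappa q_{ij}(h(x,j)-h(x,i))=\kappa\sum_{j\ne i}q_{ij}(v^j-v^i)\cdot x$, and the reaction part contributes $(v^i+\kappa u^i)\cdot(M_i x)+o(\norm x_{\ell^1})$ by the associated-CRN equation \eqref{eq:associated CRN}. Collecting terms, the coefficient of $x$ is
\begin{equation*}
v^iM_i+\kappa\Bigl(\sum_{j\ne i}q_{ij}(v^j-v^i)+u^iM_i\Bigr)+o(\norm x_{\ell^1}).
\end{equation*}
The leading term $v^iM_i$ is strictly negative in every coordinate by hypothesis, so for $\kappa$ small enough the $O(\kappa)$ switching correction cannot overcome it, regardless of how the bracketed term behaves. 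The key point is that the negativity we need now comes for free from each environment's individual stability, and the cross-environment coupling is a genuinely small perturbation precisely because $\kappa$ is small — the exact opposite of the fast-switching case, where the individual $vM_i$ need not be negative and we had to average them out. In fact the simplest version takes $u^i=0$ outright, so that $h(x,i)=v^i\cdot x$; then the coefficient of $x$ is $v^iM_i+\kappa\sum_{j\ne i}q_{ij}(v^j-v^i)$, which is coordinatewise negative once $\kappa<\min_{i,m}\frac{-(v^iM_i)_m}{\bigl|\sum_{j\ne i}q_{ij}(v^j-v^i)\bigr|_m}$ (interpreting the ratio as $+\infty$ where the denominator vanishes).

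With this coefficient strictly negative in every coordinate, I would then invoke Theorem \ref{thm:Lyapunov-exponential-ergodicity} from the Appendix exactly as in the fast-switching proof: the function $h$ is nonnegative, proper in the sense that its sublevel sets intersected with $\ZZ^d_{\ge0}\times[n]$ are finite (since $v^i\in\RR^d_{>0}$ forces $h(x,i)\to\infty$ as $\norm x_{\ell^1}\to\infty$ in each fixed environment), and outside a compact set $\mathcal L_\kappa h\le -c\norm x_{\ell^1}\le -\eta h$ for suitable constants, which yields that $Z_\kappa$ converges exponentially fast.

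I expect the main obstacle to be purely technical rather than conceptual: handling the $o(\norm x_{\ell^1})$ term and the finitely many states where the drift estimate might fail, so as to land cleanly in the hypotheses of Theorem \ref{thm:Lyapunov-exponential-ergodicity}. Unlike Theorem \ref{thm:fast-switching-ergodicity}, there is no need for the nonconstructive Lemma \ref{lem:z^m} here, since we are not trying to average the $M_i$ into $M$; each environment is already stable, so no solvability condition on a linear system involving $Q$ arises. The one subtlety worth flagging is that the threshold on $\kappa$ depends on all the $v^i$ and $Q$ simultaneously, but since there are finitely many environments and coordinates, taking a finite minimum over $i\in[n]$ and $m\in[d]$ produces a single explicit bound below which the argument goes through.
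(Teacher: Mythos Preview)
Your proposal is correct and matches the paper's proof essentially line for line: the paper also takes $h(x,i)=v^i\cdot x$ (your $u^i=0$ simplification), computes $\mathcal L_\kappa h(x,i)=\bigl(v^iM_i+\kappa\sum_{j\ne i}q_{ij}(v^j-v^i)\bigr)\cdot x+o(\norm x_{\ell^1})$, observes the leading term is strictly negative and the $\kappa$-term vanishes as $\kappa\to0$, and invokes Theorem~\ref{thm:Lyapunov-exponential-ergodicity}. You even correctly anticipate that Lemma~\ref{lem:z^m} is unnecessary here.
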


\begin{proof}[Proof of Theorem \ref{thm:slow-switching-ergodicity}]
Define $h:\ZZ^d_{\ge0}\times[n]\to\RR_{\ge0}$ via $h(x,i)=v^i\cdot x$. Then
\begin{align*}
    \mathcal L_\kappa h(x,i)
    &=\sum_{j\ne i}\kappa q_{ij}(v^j-v^i)\cdot x
    +\sum_{y\to y'\in\mathcal R_i}\lambda_{i,y\to y'}(x) (v^i\cdot (y'-y))\\
    &=\sum_{j\ne i}\kappa q_{ij}(v^j-v^i)\cdot x
    +\sum_{m=1}^dv^i_m\sum_{y\to y'\in\mathcal R_i}\lambda_{i,y\to y'}(x) (y'_m-y_m)\\
    &=\sum_{j\ne i}\kappa q_{ij}(v^j-v^i)\cdot x
    +\sum_{m=1}^dv^i_m(M_ix)_m+o(\norm x_{\ell^1})\\
    &=\sum_{j\ne i}\kappa q_{ij}(v^j-v^i)\cdot x
    +v^i\cdot (M_ix)+o(\norm x_{\ell^1})\\
    &=\left(\sum_{j\ne i}\kappa q_{ij}(v^j-v^i)+v^iM_i\right)\cdot x+o(\norm x_{\ell^1}).
\end{align*}
By Theorem \ref{thm:Lyapunov-exponential-ergodicity}, it suffices to show that
\begin{align*}
    \sum_{j\ne i}\kappa q_{ij}(v^j-v^i)+v^iM_i<0
\end{align*}
for each $i$. But $v^iM_i<0$ by assumption, and then rest of the expression is converging to zero as $\kappa\to0$. We conclude that the Markov chain is exponentially ergodic for sufficiently small $\kappa$, as claimed.
\end{proof}

\subsection{One environment}

The special case of the previous two theorems in which there is only one environment is worth being stated in its own right.

\begin{cor}\label{cor:one-environment-ergodicity}
Suppose that $M$ is a matrix which is associated to a CRN in the sense of equation \eqref{eq:associated CRN}. If there exists a vector $v\in\RR_{>0}^d$ such that $vM\in\RR_{<0}^d$, then the CRN converges exponentially fast.
\begin{proof}
    We can view this as a switched process in one environment with $M_1=M$ and switching matrix $Q=(0)$. So if we consider the process with matrix $\kappa Q$, then Theorem \ref{thm:slow-switching-ergodicity} says that the process converges exponentially fast if $\kappa$ is small enough and Theorem \ref{thm:fast-switching-ergodicity} says that the process converges exponentially fast if $\kappa$ is large enough. But $\kappa Q=Q$ in this case, so either theorem is enough to conclude the claimed result.
\end{proof}
\end{cor}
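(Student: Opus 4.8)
The plan is to recognize Corollary \ref{cor:one-environment-ergodicity} as the degenerate $n=1$ instance of the two switching theorems already proved, so that no new analytic work is required. With a single environment, there is nothing to switch between, and the entire apparatus of the \nameref{setting} Setting should collapse onto the hypothesis $vM \in \RR^d_{<0}$ together with the conclusion that the (unswitched) chain converges exponentially fast.

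First I would set $n=1$, $M_1 = M$, and take $Q$ to be the $1\times1$ zero matrix $(0)$, which is trivially a transition rate matrix for the (necessarily irreducible) one-state chain on $[1]$, with unique stationary distribution $w=(1)$. Then $M = \sum w_i M_i = M_1 = M$, so the averaged matrix coincides with the single environment matrix and the hypotheses of Theorem \ref{thm:fast-switching-ergodicity} and Theorem \ref{thm:slow-switching-ergodicity} both reduce to exactly the assumed condition that there is $v\in\RR^d_{>0}$ with $vM\in\RR^d_{<0}$. In particular the slow-switching per-environment condition ($v^i M_i \in \RR^d_{<0}$ for each $i$) is just the single condition $vM\in\RR^d_{<0}$ when $n=1$.

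The key observation that makes this work cleanly is that $\kappa Q = Q = (0)$ for every $\kappa>0$, so the mixed process $Z_\kappa$ does not actually depend on $\kappa$; it is simply the CRN $X^1$ running on $\ZZ^d_{\ge0}\times\{1\}$, which we identify with $X^1$ on $\ZZ^d_{\ge0}$. Consequently the phrases ``$\kappa$ sufficiently large'' and ``$\kappa$ sufficiently small'' in the two theorems refer to the very same $\kappa$-independent process, and either theorem may be invoked for any fixed $\kappa$ (say $\kappa=1$) to conclude that this process converges exponentially fast.

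I do not anticipate a genuine obstacle here, since the result is structurally a specialization rather than a new theorem; the only thing to be careful about is confirming that the $n=1$ data legitimately satisfies every clause of the \nameref{setting} Setting—namely that $(0)$ is an admissible irreducible transition rate matrix with stationary distribution $w=(1)$, and that $M$ being associated to a CRN via \eqref{eq:associated CRN} is precisely the stated hypothesis. Once these bookkeeping points are checked, the conclusion follows immediately by applying either Theorem \ref{thm:slow-switching-ergodicity} or Theorem \ref{thm:fast-switching-ergodicity}.
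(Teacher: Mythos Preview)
Your proposal is correct and follows essentially the same approach as the paper's own proof: specialize the \nameref{setting} Setting to $n=1$ with $M_1=M$ and $Q=(0)$, observe that $\kappa Q = Q$ so the mixed process is independent of $\kappa$, and then invoke either Theorem~\ref{thm:slow-switching-ergodicity} or Theorem~\ref{thm:fast-switching-ergodicity}. Your write-up is slightly more detailed in checking the bookkeeping (that $(0)$ is an admissible irreducible rate matrix with stationary distribution $w=(1)$ and that the averaged matrix coincides with $M$), but the argument is the same.
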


\section{Transience}\label{sec:Transience}

\subsection{Fast Switching}

Recall that Theorem \ref{thm:fast-switching-ergodicity} showed that the switched process is stable when the average environment is stable and the switching rate is fast enough. The next theorem is an analog for instability. Together, they show that for fast enough switching rates, the switched model will behave like the averaged model.

\begin{theorem}\label{thm:fast-switching-transience}
In the \nameref{setting} Setting, suppose additionally that each CRN is endowed with mass-action kinetics and has at-most-monomolecular reactions. Suppose that there exists a vector $v\in\RR^d_{\ge 0}$ such that $(vM)_m>0$ for each $m\in\supp(v)$, where $(vM)_m$ is the $m$-th entry of the vector $vM$. Then as long as $\kappa$ is sufficiently large, we have that $(x,i)$ is an evanescent state for the mixed Markov chain $Z_\kappa$ for every $i\in[n]$ and every $x\in\ZZ^d_{\ge0}$ such that $\supp(v)\cap\supp(x)\ne\emptyset$.
\end{theorem}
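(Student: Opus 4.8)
The plan is to build a Lyapunov function that certifies escape to infinity, mirroring the construction in the proof of Theorem \ref{thm:fast-switching-ergodicity} but arranged so that the relevant linear functional has strictly positive, rather than negative, drift. The first observation is structural. Since each $X^i$ has mass-action, at-most-monomolecular kinetics, each $M_i$ is a Metzler matrix (its off-diagonal entry $(M_i)_{m,l}$ records the production of species $m$ from species $l$, hence is nonnegative), and so is $M=\sum_i w_iM_i$; moreover the $o(\norm x_{\ell^1})$ term in \eqref{eq:associated CRN} is now a genuine nonnegative constant coming from inflow reactions, and all reaction jumps $y'-y$ are bounded. Combining the Metzler property with $v\in\RR^d_{\ge0}$ and $(vM)_m>0$ on $\supp(v)$ yields the key sign fact that $vM\ge0$ entrywise: for $l\notin\supp(v)$ we have $(vM)_l=\sum_{m\ne l}v_m M_{m,l}\ge0$ because $v_l=0$ and the off-diagonal entries are nonnegative. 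Thus the functional $v\cdot x$ has nonnegative averaged drift everywhere and strictly positive averaged drift whenever $\supp(v)\cap\supp(x)\ne\emptyset$.

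Next I would reduce to the sub-network of species that can influence $\supp(v)$. Let $R\subseteq[d]$ be the set of species from which a directed path along nonzero entries of $M$ reaches $\supp(v)$. Because $M$ is Metzler, no species outside $R$ produces a species in $R$, and no reaction that changes $v\cdot x$ has its source outside $R$; consequently the coordinates $x_R$ together with the environment form an autonomous CTMC (again an at-most-monomolecular mass-action CRN in the same switching environment), and $\norm{x_R}_{\ell^1}\to\infty$ already forces $\norm{x}_{\ell^1}\to\infty$. Using the theory of Metzler matrices from the Appendix I would upgrade $v$ to a weight vector $\tilde v$ that is strictly positive on all of $R$ and still satisfies $(\tilde vM)_m>0$ for every $m\in R$ (possible by assigning sufficiently small positive weights to the downstream, fast-decaying species). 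The point of insisting that $\tilde v>0$ on all of $R$ is that then $\tilde v\cdot x_R$ is comparable to $\norm{x_R}_{\ell^1}$.

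With $\tilde v$ in hand I would reuse Lemma \ref{lem:z^m}, applied to $\tilde v$ on the sub-network on $R$, to build $h_\kappa(x,i)=(\tilde v+\kappa^{-1}u^i)\cdot x_R$ with $u^i$ supported on $R$ so that $\sum_{j\ne i}q_{ij}(u^j-u^i)+\tilde vM_i=w_i^{-1}n^{-1}\tilde vM$ on $R$. The same computation as in the proof of Theorem \ref{thm:fast-switching-ergodicity} gives $\mathcal L_\kappa h_\kappa(x,i)=\bigl(w_i^{-1}n^{-1}\tilde vM+\kappa^{-1}u^iM_i\bigr)\cdot x_R+(\text{nonnegative inflow})$, and since $\tilde vM$ is entrywise positive on $R$ while the $\kappa^{-1}$ term is a vanishing perturbation, for $\kappa$ large every coefficient is bounded below by a positive constant; hence $\mathcal L_\kappa h_\kappa\ge\epsilon\,h_\kappa$ for some $\epsilon>0$. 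I would then feed this into the transience/evanescence Lyapunov criterion of the Appendix through a bounded, strictly decreasing transform $V=\varphi(h_\kappa)$, a power law $\varphi(s)=s^{-p}$ being the natural choice for a multiplicative drift, and verify that $\mathcal L_\kappa V\le0$ outside the finite set $\{h_\kappa\le K\}$ (finite in the $x_R$-coordinates by autonomy). Since $h_\kappa\ge c\norm{x_R}_{\ell^1}$, each environment-switching jump changes $h_\kappa$ by at most an $O(\kappa^{-1})$ fraction of its current value, and each reaction jump moving $h_\kappa$ has bounded size and total rate $O(h_\kappa)$; expanding $\varphi$ to second order, the favorable first-order term $\varphi'(h_\kappa)\mathcal L_\kappa h_\kappa\le-p\epsilon\,h_\kappa^{-p}$ dominates the second-order corrections, which are $O(\kappa^{-1}h_\kappa^{-p})$ from the environment and of lower order from the reactions, once $\kappa$ is large. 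Superharmonicity of $V$ off a finite set, together with $V\to0$ as $h_\kappa\to\infty$, yields positive-probability escape from states with large $h_\kappa$, and because evanescence propagates backward along positive-probability paths it extends to every $(x,i)$ with $\supp(v)\cap\supp(x)\ne\emptyset$ (which guarantees $v\cdot x>0$, hence a positive-probability route to large $h_\kappa$).

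The main obstacle, and what forces all of the care above, is the control of the fluctuations created by the switching environment. Any linear Lyapunov function that averages the environments must change by an amount proportional to $\norm x_{\ell^1}$ at each environment jump, so its jumps are unbounded and a naive exponential transform fails to be superharmonic. The two ingredients that rescue the argument are the $O(\kappa^{-1})$ smallness of these jumps in the fast-switching regime and the insistence---via the Metzler/reachability reduction and the strictly positive weights $\tilde v$ on $R$---that $h_\kappa$ dominate $\norm{x_R}_{\ell^1}$, so that each jump is a small \emph{multiplicative} fraction of $h_\kappa$ and the second-order error in $\varphi(h_\kappa)$ can be beaten by the linear drift. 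Verifying this domination, and dispatching the degenerate species that never influence $\supp(v)$ by projecting onto the autonomous reachable sub-network, is where the real work lies.
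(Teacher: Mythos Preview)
Your approach is correct in outline and genuinely different from the paper's. The paper does \emph{not} reduce to a sub-network or upgrade $v$ to a strictly positive vector. Instead it keeps $v$ supported only on $I=\supp(v)$, takes the bounded Lyapunov function $h_\kappa(x,i)=1-1/(1+\varphi^i\cdot x)$ with $\varphi^i=v+\kappa^{-1}u^i$ (also supported on $I$), and attacks $\mathcal L_\kappa h_\kappa$ directly: reactions with source outside $I$ are discarded because they can only increase $h_\kappa$, and the awkward environment-switch terms are handled by explicit two-sided bounds on the ratios $(1+\varphi^i\cdot x)/(1+\varphi^j\cdot x)$ via constants $a^m_{i,j}\to 1$ as $\kappa\to\infty$. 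Your route trades that delicate ratio analysis for an up-front structural step: pass to the autonomous sub-network on the set $R$ of species that feed into $\supp(v)$, then enlarge $v$ to $\tilde v>0$ on all of $R$. Once $\tilde v$ is strictly positive the linear functional dominates $\|x_R\|_{\ell^1}$, every environment jump is an $O(\kappa^{-1})$ \emph{multiplicative} perturbation of $h_\kappa$, and the second-order bookkeeping for $\varphi(h_\kappa)$ becomes routine. Both strategies ultimately feed into the same transience criterion; yours is more modular, the paper's avoids having to justify the upgrade.

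Three small points to tighten. First, the Metzler ``upgrade'' to $\tilde v>0$ on $R$ is true but is not literally a statement in the Appendix; you should supply the one-paragraph induction (process species in $R\setminus I$ in order of distance to $I$, assigning each a weight small enough that its negative diagonal is beaten by the already-positive incoming term, noting that adding weight never hurts other coordinates since off-diagonals are nonnegative). Second, $\varphi(s)=s^{-p}$ is not globally bounded, and Theorem~\ref{thm:Lyapunov-transience} is stated for bounded $V$ with $\mathcal L V\ge 0$ on $B^c$ and $\sup_B V<V(x_0)$; the clean fix is to take $V=1-(1+h_\kappa)^{-p}$ (increasing, bounded) and show $\mathcal L_\kappa V\ge 0$ for $h_\kappa$ large --- your computation gives exactly this after the sign flip. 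Third, the last sentence (``positive-probability route to large $h_\kappa$'') hides a small argument: from any state with $h_\kappa>0$, pick the environment maximizing $h_\kappa(x,\cdot)$, note $\mathcal L_\kappa h_\kappa>0$ there, and conclude some \emph{reaction} (not just an environment switch) strictly increases $h_\kappa$; iterate. This is the content of the paper's Lemma~\ref{lem:v cdot x^0>C} and goes through verbatim in your reduced network.
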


\begin{remark}
    The condition $\supp(v)\cap\supp(x)\ne\emptyset$ means that $x$ is non-zero in at least one coordinate in which $v$ is non-zero. As for the transience or recurrence of the states where $x$ is zero in every coordinate in which $v$ is non-zero, there are two cases. The first is that, given such an $x$, some state $z$ with $\supp(v)\cap\supp(z)\ne\emptyset$ is reachable from $x$. In that case $z$ is transient and hence so is $x$. (See Example~\ref{ex:basic switching transient} for an example of such reasoning.) The second case is that, given $x$, no such $z$ is reachable. In that case the problem reduces to studying a lower-dimensional model (that is, one with fewer species) for which $x$ may or may not be transient, and potential options for studying this include Theorems \ref{thm:fast-switching-ergodicity} or \ref{thm:fast-switching-transience} from this paper.\hfill //
\end{remark}

\begin{remark}
    The condition that there exists a vector $v$ such that $(vM)_m>0$ for each $m\in\supp(v)$ should be thought of as an instability condition on the matrix $M$. Indeed, if $v=0$ then Theorem \ref{thm:fast-switching-transience} does not give transience for any states $(x,i)$, whereas by Proposition \ref{prop:Hurwitz-unstable} a non-zero $v$ such that $(vM)_m>0$ for each $m\in\supp(v)$ exists iff $M$ is Hurwitz-unstable (that is, iff $M$ has an eigenvalue with strictly positive real part). The conclusion of the theorem, then, is that if the mixed matrix $M$ is unstable in a linear algebra sense, then the mixed stochastic process is unstable in the usual Markov chain sense as long as the environment switches rapidly enough. \hfill //
\end{remark}

The idea of the proof of Theorem \ref{thm:fast-switching-transience} will be to define a Lyapunov function using Lemma \ref{lem:z^m}, show that the generator applied to this function is positive as long as the function is sufficiently large, and then apply Theorem \ref{thm:Lyapunov-transience} to conclude that the claimed states are evanescent. For this final step it will be convenient if the only reactions $y\to y'$ which are invisible to $v$ in the sense that $v\cdot (y'-y)=0$ are those for which $\supp(v)\cap\supp(y'-y)=\emptyset$. As it turns out, we can assume this without loss of generality, which follows from the next lemma.

\begin{lemma}\label{lem:v not orthogonal to reactions}
In the setting of Theorem \ref{thm:fast-switching-transience}, suppose we are given a finite collection $\Xi\subset\RR^d$ with the property that for all $\xi\in\Xi$ we have $\supp(v)\cap\supp(\xi)\ne\emptyset$. Then without loss of generality we may assume that $v\cdot\xi\ne0$ for all $\xi\in\Xi$.

Explicitly, the claim is that in the setting of Theorem \ref{thm:fast-switching-transience} there exists a vector $v^\varepsilon$ such that:
\begin{itemize}
    \item $v^\varepsilon\in\RR^d_{\ge0}$.
    
    \item $(v^\varepsilon M)_m>0$ for each $m$ such that $v^\varepsilon_m>0$.

    \item $\supp(v^\varepsilon)=\supp(v)$ (and hence in particular $\supp(v^\varepsilon)\cap\supp(x)=\supp(v)\cap\supp(x)$ for all $x\in\ZZ^d_{\ge0}$).

    \item $v^\varepsilon\cdot\xi\ne0$ for all $\xi\in\Xi$.
\end{itemize}
\begin{proof}
Let $v\in\RR^d_{\ge0}$ be the vector given by the statement of the theorem, and consider the subspace $A$ of $\RR^d$ defined by
\[
    A:=\{z\in\RR^d:\supp(z)\subseteq\supp(v)\}=\operatorname{span}\{e^m:v_m>0\},
\]
where $e^m$ is the unit vector in direction $m$. For each $\xi\in\Xi$, the space $\{z\in A:z\cdot\xi=0\}$ is a subspace of $A$ of dimension at most one less than that of $A$. Since the union of finitely many lower-dimensional sets is not the whole space, there exists some $z\in\RR^d$ such that $z\cdot\xi\ne0$ for all $\xi\in\Xi$ and also $\supp(z)\subseteq\supp(v)$.

Let $z$ be such. For $\varepsilon>0$, consider the vector $v^\varepsilon:=v+\varepsilon z$. Notice that as long as $\varepsilon$ is small enough, $v^\varepsilon$ will be a non-negative vector with $\supp(v^\varepsilon)=\supp(v)$ (here it is important that $v$ is non-negative and $\supp(z)\subseteq\supp(v)$). Furthermore, notice that
\begin{align*}
    (v^\varepsilon M)_m
    &=(vM)_m+\varepsilon (zM)_m.
\end{align*}
From this it follows that if $\varepsilon$ is small enough, $(v^\varepsilon M)_m$ will be positive whenever $(vM)_m$ is.

Lastly, we claim that $v^\varepsilon\cdot\xi\ne0$ for all $\xi\in\Xi$ as long as $\varepsilon$ is small enough. Toward this end, notice that $v^\varepsilon\cdot\xi=v\cdot\xi+\varepsilon z\cdot\xi$; since $z\cdot\xi\ne0$ this quantity is a linear function of $\varepsilon$ (with non-zero slope). It follows that for each $\xi$ there is exactly one value of $\varepsilon$ for which $v^\varepsilon\cdot\xi=0$. The desired result follows.
\end{proof}
\end{lemma}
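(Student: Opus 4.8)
The plan is to realize $v^\varepsilon$ as a small perturbation $v+\varepsilon z$ of the given vector $v$, where $z$ is a fixed auxiliary vector chosen once and for all so that $z\cdot\xi\ne 0$ for every $\xi\in\Xi$, and where $z$ is supported inside $\supp(v)$ so that the perturbation disturbs neither the support of $v$ nor the sign conditions on $vM$. The point is that three of the four requirements on $v^\varepsilon$ (non-negativity, the strict positivity of $(v^\varepsilon M)_m$ on $\supp(v^\varepsilon)$, and the support equality) are \emph{open} conditions already satisfied by $v$, hence preserved under sufficiently small perturbations. So essentially all the content lies in producing a single $z$ that makes the fourth, non-orthogonality condition achievable.

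To find $z$, I would work inside the coordinate subspace $A:=\{z\in\RR^d:\supp(z)\subseteq\supp(v)\}$, which is spanned by the standard basis vectors $e^m$ with $m\in\supp(v)$. Here the hypothesis on $\Xi$ is exactly what is needed: for each $\xi\in\Xi$ the condition $\supp(v)\cap\supp(\xi)\ne\emptyset$ says that $\xi$ has a nonzero entry in some coordinate belonging to $\supp(v)$, so the linear functional $z\mapsto z\cdot\xi$ is not identically zero on $A$. Its kernel is therefore a proper subspace of $A$, of dimension at most $\dim A-1$. Since $\Xi$ is finite and a finite union of proper subspaces cannot exhaust a vector space over $\RR$, there exists $z\in A$ lying outside all of these kernels, i.e.\ with $z\cdot\xi\ne 0$ for every $\xi\in\Xi$ and $\supp(z)\subseteq\supp(v)$. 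This finite-union-of-subspaces argument is the conceptual heart of the lemma.

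With such a $z$ fixed, I would set $v^\varepsilon:=v+\varepsilon z$ and check the four bullet points for small $\varepsilon>0$. Non-negativity and the support equality hold because $v$ is strictly positive on $\supp(v)$, so its entries there remain positive under an $O(\varepsilon)$ correction, while both $v$ and $z$ vanish off $\supp(v)$; the inequality $(v^\varepsilon M)_m=(vM)_m+\varepsilon(zM)_m>0$ holds for every $m\in\supp(v^\varepsilon)=\supp(v)$ since $(vM)_m>0$ there and the correction is again $O(\varepsilon)$. For the non-orthogonality, $v^\varepsilon\cdot\xi=v\cdot\xi+\varepsilon(z\cdot\xi)$ is an affine function of $\varepsilon$ with nonzero slope $z\cdot\xi$, hence vanishes for at most one value of $\varepsilon$ per $\xi$.

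The only obstacle is the mild bookkeeping of selecting one $\varepsilon$ that meets all requirements at once. Each requirement is either an open condition valid on some interval $(0,\varepsilon_0)$ or the avoidance of a single forbidden value of $\varepsilon$; since $\Xi$ is finite there are only finitely many such constraints, so any sufficiently small $\varepsilon$ lying outside the resulting finite bad set works simultaneously, and the lemma follows.
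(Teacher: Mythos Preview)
Your proposal is correct and follows essentially the same argument as the paper: define the coordinate subspace $A=\{z:\supp(z)\subseteq\supp(v)\}$, use the hypothesis on $\Xi$ to see that each $\xi$ cuts out a proper subspace of $A$, pick $z$ avoiding the finite union of these hyperplanes, and set $v^\varepsilon=v+\varepsilon z$ for small $\varepsilon$. If anything, you are slightly more explicit than the paper in spelling out why the functional $z\mapsto z\cdot\xi$ is nonzero on $A$ and in the final bookkeeping for choosing a common $\varepsilon$.
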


Because a partial result from the proof of Theorem \ref{thm:fast-switching-transience} will be useful for us later, most of the proof of the theorem will be outsourced in the form of the next two Lemmas, Lemma \ref{lem:generator positive} and Lemma \ref{lem:v cdot x^0>C}. (Specifically, the proof of Theorem \ref{thm:slow-switching-transience} will use Lemma \ref{lem:v cdot x^0>C} via Corollary \ref{cor:one-environment-lemma}.)

\begin{lemma}\label{lem:generator positive}
    In the setting of Theorem \ref{thm:fast-switching-transience}, let $I:=\supp(v)$ denote the set of species which are visible to $v$. For $m\in I$, let $z^m$ be the vector given to us by applying Lemma \ref{lem:z^m} to $v$; for $m\notin I$, let $z^m$ be the vector of all zeros. For $x\in\ZZ^d_{\ge0}$ and $i\in\{1,2,\cdots,n\}$, let
    \begin{align*}
        h_\kappa(x,i)=1-\frac{1}{1+\sum\limits_{m=1}^d (v_m +\kappa^{-1} z^m_i)x_m}.
    \end{align*}
    Let $\mathcal L_\kappa$ denote the generator of the Markov chain $Z_\kappa$. Then for each large enough $\kappa$, there exists some constant $b$ (depending implicitly on $\kappa$ but not on $x$ or $i$) such that $\mathcal L_\kappa h_\kappa(x,i)>0$ whenever $v\cdot x\ge b$.
\begin{proof}
    For $i=1,\cdots,n$, let $\widetilde{\mathcal R}_i$ denote the subset of $\mathcal R_i$ consisting of reactions which do not take the $m$-th species as an input for any $m\notin I$. In words, $\widetilde{\mathcal R}_i$ is the set of reactions $y\to y'$ such that every species which appears in $y$ is visible to $v$.

We claim that if $y\to y'\notin\widetilde{\mathcal R}_i$ then $h_\kappa(x-y+y',i)\ge h_\kappa(x,i)$. Indeed, suppose that $y\to y'\notin\widetilde{\mathcal R}_i$. If $m\notin I$ then $v_m+\kappa^{-1}z^m_i=0$ and if $m\in I$ we have $y'_m-y_m\ge0$ (it is important here that $y$ is at-most-monomolecular). It follows that each term in the sum in the denominator of $h_\kappa(x-y+y',i)$ is no smaller than the corresponding term in $h_\kappa(x,i)$, so $h_\kappa(x-y+y',i)\ge h_\kappa(x,i)$ as claimed.

Notice that for each $x$ and each $i$,
\begin{align*}
    &\mathcal L_\kappa h_\kappa(x,i)\\
    &=\sum_{j\ne i}\kappa q_{ij}(h_\kappa(x,j)-h_\kappa(x,i))
    +\sum_{y\to y'\in\mathcal R_i}\lambda_{i,y\to y'}(x)(h_\kappa(x-y+y',i)-h_\kappa(x,i))\\
    &\ge\sum_{j\ne i}\kappa q_{ij}(h_\kappa(x,j)-h_\kappa(x,i))
    +\sum_{y\to y'\in\widetilde{\mathcal R}_i}\lambda_{i,y\to y'}(x)(h_\kappa(x-y+y',i)-h_\kappa(x,i))\\
    &=\frac{1}{1+\sum\limits_{m=1}^d (v_m +\kappa^{-1} z^m_i)x_m}\left(\sum_{j\ne i}\frac{\kappa q_{ij}\sum\limits_{m=1}^d\kappa^{-1}(z_j^m-z_i^m)x_m}{1+\sum\limits_{m=1}^d (v_m +\kappa^{-1} z^m_j)x_m}
    +\sum_{y\to y'\in\widetilde{\mathcal R}_i}\frac{\lambda_{i,y\to y'}(x)\sum\limits_{m=1}^d (v_m +\kappa^{-1} z^m_i)(y'_m-y_m)}{1+\sum\limits_{m=1}^d (v_m +\kappa^{-1} z^m_i)(x_m-y_m+y'_m)}\right)\\
    &=:g_\kappa(x,i)
\end{align*}
where the inequality follows from the discussion in the previous paragraph, and the last line is the definition of $g_\kappa$.

Consider the quantity
\begin{align*}
    *:=\sum_{j\ne i}\frac{1+\sum\limits_{m=1}^d (v_m +\kappa^{-1} z^m_i)x_m}{1+\sum\limits_{m=1}^d (v_m +\kappa^{-1} z^m_j)x_m}\left( q_{ij}\sum\limits_{m=1}^d(z_j^m-z_i^m)x_m\right)
    +\sum_{y\to y'\in\widetilde{\mathcal R}_i}\lambda_{i,y\to y'}(x)\sum_{m=1}^d (v_m +\kappa^{-1} z^m_i)(y'_m-y_m).
\end{align*}
The difference between $*$ and $(1+\sum_{m=1}^d (v_m +\kappa^{-1} z^m_i)x_m)^2 g_\kappa(x,i)$ is 
\begin{align*}
    *\,-\,&\left(1+\sum_{m=1}^d (v_m +\kappa^{-1} z^m_i)x_m\right)^2g_\kappa(x,i)\\
    &=\sum_{y\to y'\in\widetilde{\mathcal R}_i}\sum_{m=1}^d\lambda_{i,y\to y'}(x) (v_m +\kappa^{-1} z^m_i)(y'_m-y_m)\left(1-\frac{1+\sum\limits_{m=1}^d (v_m +\kappa^{-1} z^m_i)x_m}{1+\sum\limits_{m=1}^d (v_m +\kappa^{-1} z^m_i)(x_m-y_m+y'_m)}\right)\\
    &=\sum_{y\to y'\in\widetilde{\mathcal R}_i}\sum_{m=1}^d\lambda_{i,y\to y'}(x) (v_m +\kappa^{-1} z^m_i)(y'_m-y_m)\left(\frac{\sum\limits_{m=1}^d (v_m +\kappa^{-1} z^m_i)(y'_m-y_m)}{1+\sum\limits_{m=1}^d (v_m +\kappa^{-1} z^m_i)(x_m-y_m+y'_m)}\right).
\end{align*}
The term $\lambda_{i,y\to y'}(x) (v_m +\kappa^{-1} z^m_i)(y'_m-y_m)$ only depends on $x$ via $\lambda_{i,y\to y'}(x)$, and since the mass-action CRN associated to $M_i$ has at-most-monomolecular reactions we know that $\lambda_{i,y\to y'}(x)$ is at most linear in $x$. Moreover, by definition of $\widetilde{\mathcal R}_i$ we know that the coefficient of $x_m$ is zero for each $m\notin I$.

Meanwhile, the numerator of the fraction in the last line does not depend on $x$, and the denominator is (affine) linear in $x$ with each $x_m$ having a non-negative coefficient which is positive iff $m\in I$. Therefore, each term in the (finite!) sum above is bounded as $x_m$ varies, and so the difference between $*$ and $(1+\sum_{m=1}^d (v_m +\kappa^{-1} z^m_i)x_m)^2g_\kappa(x,i)$ is bounded. We conclude that if we are able to show that $*\to\infty$ as $v\cdot x\to\infty$, then it would follow that $(1+\sum_{m=1}^d (v_m +\kappa^{-1} z^m_i)x_m)^2\mathcal L_\kappa h_\kappa(x,i)\to\infty$ as well.

Now we turn to analyzing $*$. Notice that
\begin{align*}
    \min\left\{1,\min_{l\in I}\left\{\frac{v_l +\kappa^{-1} z^l_i}{v_l +\kappa^{-1} z^l_j}\right\}\right\}\left(1+\sum_{m=1}^d (v_m +\kappa^{-1} z^m_j)x_m\right)
    &\le 1+\sum_{m=1}^d (v_m +\kappa^{-1} z^m_i)x_m\\
    \min\left\{1,\min_{l\in I}\left\{\frac{v_l +\kappa^{-1} z^l_i}{v_l +\kappa^{-1} z^l_j}\right\}\right\}
    &\le \frac{1+\sum\limits_{m=1}^d (v_m +\kappa^{-1} z^m_i)x_m}{1+\sum\limits_{m=1}^d (v_m +\kappa^{-1} z^m_j)x_m}
\end{align*}
and similarly
\begin{align*}
    \frac{1+\sum\limits_{m=1}^d (v_m +\kappa^{-1} z^m_i)x_m}{1+\sum\limits_{m=1}^d (v_m +\kappa^{-1} z^m_j)x_m}
    &\le \max\left\{1,\max_{l\in I}\left\{\frac{v_l +\kappa^{-1} z^l_i}{v_l +\kappa^{-1} z^l_j}\right\}\right\}
\end{align*}
Therefore, if we define
\begin{align*}
    a^m_{i,j}=\begin{cases}\displaystyle
        \min\left\{1,\min_{l\in I}\left\{\frac{v_l +\kappa^{-1} z^l_i}{v_l +\kappa^{-1} z^l_j}\right\}\right\}      &     \qquad z_j^m-z_i^m>0\\
        1   &     \qquad z_j^m-z_i^m=0\\\displaystyle
        \max\left\{1,\max_{l\in I}\left\{\frac{v_l +\kappa^{-1} z^l_i}{v_l +\kappa^{-1} z^l_j}\right\}\right\}      &     \qquad z_j^m-z_i^m<0
    \end{cases},
\end{align*}
then
\begin{align*}
    *&=\sum_{j\ne i}\frac{1+\sum\limits_{m=1}^d (v_m +\kappa^{-1} z^m_i)x_m}{1+\sum\limits_{m=1}^d (v_m +\kappa^{-1} z^m_j)x_m}\left( q_{ij}\sum_{m=1}^d(z_j^m-z_i^m)x_m\right)
    +\sum_{y\to y'\in\widetilde{\mathcal R}_i}\lambda_{i,y\to y'}(x)\sum_{m=1}^d (v_m +\kappa^{-1} z^m_i)(y'_m-y_m)\\
    &\ge\sum_{j\ne i}q_{ij}\sum_{m=1}^d a^m_{i,j}(z_j^m-z_i^m)x_m
    +\sum_{y\to y'\in\widetilde{\mathcal R}_i}\lambda_{i,y\to y'}(x)\sum_{m=1}^d (v_m +\kappa^{-1} z^m_i)(y'_m-y_m)
\end{align*}

Define $\varphi^i$ to be the vector $(\varphi^i_1,\cdots,\varphi^i_d)$ whose $m$-th entry is $\varphi^i_m:=v_m +\kappa^{-1} z^m_i$. If the sum over $\widetilde{\mathcal R}_i$ in the last line above were over $\mathcal R_i$ instead, then it would be
\begin{align*}
    \sum_{y\to y'\in\mathcal R_i}\lambda_{i,y\to y'}(x)\sum_{m=1}^d (v_m +\kappa^{-1} z^m_i)(y'_m-y_m)
    &=\sum_{m=1}^d \varphi^i_m(M_ix)_m+o(\norm{x}_{\ell^1})\\
    &=\varphi^i\cdot(M_ix)+o(\norm{x}_{\ell^1})\\
    &=(\varphi^i M_i)\cdot x+o(\norm{x}_{\ell^1})\\
    &=\sum_{m=1}^d(\varphi^i M_i)_mx_m+o(\norm{x}_{\ell^1}).
\end{align*}
But the distinction between $\widetilde{\mathcal R}_i$ and $\mathcal R_i$ is exactly that the former leaves out the reactions whose rate is a multiple of $x_m$ for some $m\notin I$. Therefore,
\begin{align*}
    \sum_{y\to y'\in\widetilde{\mathcal R}_i}\lambda_{i,y\to y'}(x)\sum_{m=1}^d (v_m +\kappa^{-1} z^m_i)(y'_m-y_m)
    &=\sum_{m\in I}(\varphi^i M_i)_mx_m+o(\norm{x}_{\ell^1}).
\end{align*}
Plugging this into the above and applying the fact that $z_j^m=0$ for each $m\notin I$ and each $j$, we get
\begin{align*}
    *&\ge\sum_{j\ne i}q_{ij}\sum_{m=1}^d a^m_{i,j}(z_j^m-z_i^m)x_m
    +\sum_{y\to y'\in\widetilde{\mathcal R}_i}\lambda_{i,y\to y'}(x)\sum_{m=1}^d (v_m +\kappa^{-1} z^m_i)(y'_m-y_m)\\
    &=\sum_{j\ne i}q_{ij}\sum_{m=1}^d a^m_{i,j}(z_j^m-z_i^m)x_m
    +\sum_{m\in I}(\varphi^i M_i)_mx_m+o(\norm{x}_{\ell^1})\\
    &=\sum_{j\ne i}q_{ij}\sum_{m\in I} a^m_{i,j}(z_j^m-z_i^m)x_m
    +\sum_{m\in I}(\varphi^i M_i)_mx_m+o(\norm{x}_{\ell^1})\\
    &=\sum_{m\in I}\left(\sum_{j\ne i}q_{ij} a^m_{i,j}(z_j^m-z_i^m)+(\varphi^i M_i)_m\right)x_m+o(\norm{x}_{\ell^1}).
\end{align*}
Written in this form, it is clear that to show $*\to\infty$ as $v\cdot x\to\infty$, it suffices to show that
\begin{align}\label{eq:leading coefficient}
    \sum_{j\ne i}q_{ij}a^m_{i,j}(z_j^m-z_i^m)
    +(\varphi^i M_i)_m>0
\end{align}
for each $m\in I$. But notice that $a^m_{i,j}\to1$ as $\kappa\to\infty$ and $\varphi_m^i\to v_m$ as $\kappa\to\infty$, and the expression above does not otherwise depend on $\kappa$. So
\begin{align*}
    \lim_{\kappa\to\infty}\sum_{j\ne i}q_{ij}a^m_{i,j}(z_j^m-z_i^m)+(\varphi^i M_i)_m
    &=\sum_{j\ne i}q_{ij}(z_j^m-z_i^m)+(v M_i)_m.
\end{align*}
Recall that the vector $z^m$ was selected using Lemma \ref{lem:z^m} exactly so that the right-hand side of this equation would be equal to $w_i^{-1}n^{-1}(vM)_m$. Since $(vM)_m>0$ for $m\in I$ by assumption, we conclude that \eqref{eq:leading coefficient} is satisfied as long as $\kappa$ is sufficiently large. Fix $\kappa$ large enough that the inequality \eqref{eq:leading coefficient} is satisfied. Then it follows that we have $*\to\infty$ as $v\cdot x\to\infty$ and hence $(1+\sum_{m=1}^d (v_m +\kappa^{-1} z^m_i)x_m)^2\mathcal L_\kappa h_\kappa(x,i)\to\infty$ as well. Fix $b$ large enough that when $v\cdot x\ge b$, we have $(1+\sum_{m=1}^d (v_m +\kappa^{-1} z^m_i)x_m)^2\mathcal L_\kappa h_\kappa(x,i)>0$. Then because the square is positive, for all such $x$ we have $\mathcal L_\kappa h_\kappa(x,i)>0$ as well. (In principal $b$ should depend on $i$ as well as on $\kappa$, but there are only finitely many possibilities for $i$ so we're just picking $b$ large enough that it works for all $i$). In other words, $\mathcal L_\kappa h_\kappa(x,i)>0$ whenever $v\cdot x\ge b$, which is exactly the result we set out to prove.
\end{proof}
\end{lemma}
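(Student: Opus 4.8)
The plan is to compute $\mathcal L_\kappa h_\kappa(x,i)$ directly and show that its sign becomes positive once $v\cdot x$ is large. Writing $L_i(x):=\sum_{m=1}^d(v_m+\kappa^{-1}z_i^m)x_m$ for the linear form inside the denominator, observe that $h_\kappa(x,i)=1-(1+L_i(x))^{-1}$ is a bounded, increasing, concave function of $L_i(x)$, and that the generator splits into an environment-switching part governed by $\kappa Q$ and a reaction part governed by $\mathcal R_i$. First I would isolate the reactions $y\to y'$ whose source complex feeds on a species outside $I:=\supp(v)$ and argue they can only raise $h_\kappa$: for $m\notin I$ the coefficient $v_m+\kappa^{-1}z_i^m$ vanishes, while for $m\in I$ the at-most-monomolecular hypothesis forces $y_m=0$ and hence $y'_m-y_m\ge0$, so $L_i(x-y+y')\ge L_i(x)$. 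Discarding these non-negative reaction terms produces a lower bound $\mathcal L_\kappa h_\kappa(x,i)\ge g_\kappa(x,i)$ in which only the restricted reaction set $\widetilde{\mathcal R}_i$ survives.

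Since $(1+L_i(x))^2>0$, the sign of $\mathcal L_\kappa h_\kappa(x,i)$ agrees with that of $(1+L_i(x))^2g_\kappa(x,i)$, and the crux is to show this quantity diverges to $+\infty$ as $v\cdot x\to\infty$ for $\kappa$ large. To linearise, I would introduce an auxiliary quantity $*$ in which every denominator other than $1+L_j(x)$ has been cleared, and check that $*-(1+L_i(x))^2g_\kappa(x,i)$ stays bounded in $x$. This is where the mass-action and at-most-monomolecular assumptions are essential: each rate $\lambda_{i,y\to y'}(x)$ is then at most affine in $x$, and within $\widetilde{\mathcal R}_i$ it carries no dependence on $x_m$ for $m\notin I$, so the error is a finite sum of terms each of the shape (something affine in $x$) times (a numerator independent of $x$) divided by (an affine function of $x$ with positive coefficients on $I$), all of which are bounded.

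It then suffices to show $*\to\infty$. I would sandwich each ratio $(1+L_i(x))/(1+L_j(x))$ between the $\kappa$-dependent constants obtained by comparing the coefficients $v_l+\kappa^{-1}z_i^l$ and $v_l+\kappa^{-1}z_j^l$ over $l\in I$, packaging them into factors $a^m_{i,j}$ chosen according to the sign of $z_j^m-z_i^m$. Rewriting the surviving reaction contribution as $(\varphi^i M_i)\cdot x+o(\norm{x}_{\ell^1})$ with $\varphi^i_m:=v_m+\kappa^{-1}z_i^m$ (restricted to $m\in I$, since $\widetilde{\mathcal R}_i$ omits the invisible-input reactions), this yields a lower bound for $*$ that is linear in $x$ with coefficient $\sum_{j\ne i}q_{ij}a^m_{i,j}(z_j^m-z_i^m)+(\varphi^iM_i)_m$ in each coordinate $m\in I$. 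Finally I would send $\kappa\to\infty$: since $a^m_{i,j}\to1$ and $\varphi^i\to v$, this coefficient tends to $\sum_{j\ne i}q_{ij}(z_j^m-z_i^m)+(vM_i)_m$, which by the defining property of $z^m$ in Lemma \ref{lem:z^m} equals $w_i^{-1}n^{-1}(vM)_m>0$ for every $m\in I$ by hypothesis. Thus for all sufficiently large $\kappa$ every leading coefficient is strictly positive, forcing $*\to\infty$ and hence $(1+L_i(x))^2\mathcal L_\kappa h_\kappa(x,i)\to\infty$; choosing $b$ large enough, uniformly over the finitely many values of $i$, that this is positive once $v\cdot x\ge b$ finishes the argument.

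The hard part will be the passage from the genuinely nonlinear generator to a usable linear estimate. Clearing denominators introduces the ratio factors $(1+L_i(x))/(1+L_j(x))$, which are not themselves linear, so one must argue in one stroke that the discarded reactions cause no loss, that the denominator-clearing error stays bounded, and that the leading coefficients survive and remain positive in the limit $\kappa\to\infty$. The at-most-monomolecular and mass-action hypotheses are precisely what keep every error term at most linear and therefore controllable.
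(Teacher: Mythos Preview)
Your proposal is correct and follows essentially the same approach as the paper's own proof: the same restriction to $\widetilde{\mathcal R}_i$, the same introduction of an auxiliary linearised quantity $*$ differing from $(1+L_i(x))^2 g_\kappa(x,i)$ by a bounded amount, the same sandwich bounds $a^m_{i,j}$ on the ratios $(1+L_i(x))/(1+L_j(x))$, and the same limiting argument invoking Lemma~\ref{lem:z^m} to identify the $\kappa\to\infty$ limit of the leading coefficients as $w_i^{-1}n^{-1}(vM)_m>0$. One minor imprecision: you write that the sign of $\mathcal L_\kappa h_\kappa(x,i)$ ``agrees with'' that of $(1+L_i(x))^2g_\kappa(x,i)$, but in fact you only have $\mathcal L_\kappa h_\kappa\ge g_\kappa$; this is harmless since positivity of $g_\kappa$ is all you need.
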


\begin{lemma}\label{lem:v cdot x^0>C}
    In the setting of Theorem \ref{thm:fast-switching-transience}, for any constant $C\in\RR$, any $i\in[n]$, and any $x\in\ZZ^d_{\ge0}$ such that $\supp(v)\cap\supp(x)\ne\emptyset$, there exists an $x^0\in\ZZ^d_{\ge0}$ such that $(x^0,i)$ is reachable from $(x,i)$ and $v\cdot x^0>C$.
\begin{proof}
Let $\Xi=\{y'-y:\exists i, y\to y'\in\mathcal R_i\text{ and }\supp(v)\cap\supp(y'-y)\ne\emptyset\}$ be the set of reactions in any environment which affect at least one species seen by $v$. By Lemma \ref{lem:v not orthogonal to reactions}, we may assume without loss of generality that $v$ sees all these reactions; that is, that $v\cdot(y'-y)\ne0$ for all $y'-y\in\Xi$.

For $m\in[d]$ let $z^m$ be as in the statement of Lemma \ref{lem:generator positive}; similarly, let $h_\kappa$ and $\mathcal L_\kappa$ be as in the statement of Lemma \ref{lem:generator positive}. As in the proof of Lemma \ref{lem:generator positive}, let $\varphi^i=(\varphi^i_1,\cdots,\varphi^i_d)$ be the vector defined by $\varphi^i_m:=v_m +\kappa^{-1} z^m_i$.

The proof will consist of three claims.

\textit{Claim 1:} As long as $\kappa$ is large enough, for any $i$ and any reaction $y\to y'\in\mathcal R_i$, the numbers $\varphi^i\cdot(y'-y)$ and $v\cdot(y'-y)$ have the same sign (positive, negative, or zero).

\vspace{.2cm}

Notice that for any reaction $y\to y'$, 
\begin{align*}
    \varphi^i\cdot(y'-y)
    &=\sum_{m=1}^d(v_m+\kappa^{-1}z^m_i)(y'_m-y_m)\\
    &=v\cdot(y'-y)+\kappa^{-1}\sum_{m=1}^d(z^m_i)(y'_m-y_m)
\end{align*}
In particular, $\varphi^i\cdot(y'-y)\to v\cdot(y'-y)$ as $\kappa\to\infty$, and hence $\varphi^i\cdot(y'-y)$ will have the same sign as $v\cdot(y'-y)$ whenever $\kappa$ is large enough, provided $v\cdot(y'-y)\ne0$. By our application of Lemma \ref{lem:v not orthogonal to reactions} above, this includes all reactions $y\to y'$ with $\supp(v)\cap\supp(y'-y)\ne\emptyset$. Since there are only finitely many possible reactions, as long as $\kappa$ is large enough it follows that $\varphi^i\cdot(y'-y)$ has the same sign as $v\cdot(y'-y)$ whenever $\supp(v)\cap\supp(y'-y)\ne\emptyset$.

Now recall that $z_i^m=0$ whenever $v_m=0$. Therefore, $\supp(\varphi^i)=\supp(v)$ and hence if $y\to y'$ is a reaction such that $\supp(v)\cap\supp(y'-y)=\emptyset$ then $\supp(\varphi^i)\cap\supp(y'-y)=\supp(v)\cap(y'-y)=\emptyset$ and in particular $\varphi^i\cdot(y'-y)=0=v\cdot(y'-y)$. Therefore, $\varphi^i\cdot(y'-y)$ and $v\cdot(y'-y)$ still have the same sign in the case where $\supp(v)\cap\supp(y'-y)=\emptyset$, completing the proof of the claim.

\vspace{.2cm}

\textit{Claim 2:} If $x$ is such that $\supp(v)\cap\supp(x)\ne\emptyset$, then there exists at least one environment $i$ and at least one reaction $y\to y'\in\mathcal R_i$ such that $\lambda_{i,y\to y'}(x)>0$ and $v\cdot(y'-y)>0$.

\vspace{.2cm}

Let $x$ be such that $\supp(v)\cap\supp(x)\ne\emptyset$. For $m=1,\cdots,d$, let $e^m$ denote the standard basis vector in direction $m$ (that is, $e^m_l$ is one if $m=l$ and zero otherwise). Fix $m\in\supp(v)\cap\supp(x)$ and consider the vector $z:=x+e_m \ceil{(v_m)^{-1} b}$, where $\ceil{(v_m)^{-1} b}$ is the least integer greater than $(v_m)^{-1} b$ and $b$ the constant given by Lemma \ref{lem:generator positive}. Notice that $v\cdot z\ge v\cdot x+b>b$. It follows from Lemma \ref{lem:generator positive} that $\mathcal L_\kappa h_\kappa(z,j)>0$ for every $j$, and hence for each $j$ there exists at least one transition which increases $h_\kappa$ in state $(z,j)$. In particular, there exists at least one transition in state $(z,i)$ which increases $h_\kappa$, where $i$ is selected such that $h_\kappa(z,i)=\max_jh_\kappa(z,j)$. But this transition can't be a change of environment by choice of $i$, so it has to be a reaction $y\to y'\in\mathcal R_i$. But notice by definition of $h_\kappa$ that $h_\kappa(z+y'-y,i)>h_\kappa(z,i)$ implies that $\varphi^i\cdot(y'-y)>0$. It follows from Claim 1 that $v\cdot(y'-y)>0$.

It remains only to show that $\lambda_{i,y\to y'}(x)>0$. Indeed, because all reactions have at-most-monomolecular inputs, the only way that $\lambda_{i,y\to y'}(x)$ could be zero was if $x_l=0$ for the (necessarily unique) species such that $y_l=1$. Notice that if $l\ne m$ then $x_l=z_l$ and so the fact that $y\to y'$ is admissible in state $z$ implies that it is admissible in state $x$, and if $l=m$ then $x_m>0$ by choice of $m$. So in either case $\lambda_{i,y\to y'}(x)>0$, as claimed.

\vspace{.2cm}

\textit{Claim 3:} For any constant $C\in\RR$ and any $i\in[n]$, if $x\in\ZZ_{\ge0}^d$ is such that $\supp(v)\cap\supp(x)\ne\emptyset$, there exists a $x^0\in\ZZ_{\ge0}^d$ such that $(x^0,i)$ is reachable from $(x,i)$ and $v\cdot x^0>C$.

\vspace{.2cm}

We first note that if $x$ is a state to which we can apply Claim 2, then Claim 2 can also be applied to the resulting state $x+y'-y$. Indeed, if $\supp(x)\cap\supp(v)\ne0$ then $v\cdot x>0$ since $v$ and $x$ are non-negative vectors, and so if also $v\cdot(y'-y)>0$, then $v\cdot(x+y'-y)>v\cdot x>0$. But then $\supp(v)\cap\supp(x+y'-y)\ne\emptyset$, so Claim 2 indeed applies to $x+y'-y$.

So suppose that $i\in[n]$, and $x$ is such that $\supp(v)\cap\supp(x)\ne\emptyset$. Then by Claim $2$ there is some environment $i_1$ and some reaction $y'\to y\in\mathcal R_{i_1}$ with $v\cdot(y'-y)>0$ such that $(x^1,i_1)$ is reachable from $(x,i_1)$, where $x^1:=x+y'-y$. But $(x,i_1)$ is reachable from $(x,i)$ because $Q$ is irreducible, so $(x^1,i_1)$ is reachable from $(x,i)$. Moreover, this increase the dot product with $v$ by $v\cdot(y'-y)$; that is, $v\cdot x^1-v\cdot x=v\cdot(y'-y)$.

Iterating Claim 2, we get a sequence of environments $i_1,i_2,\cdots$ and reactions $x^1,x^2\cdots$ such that for each integer $p$, the state $(x^p,i_p)$ is reachable from $(x,i)$. Moreover, there are only finitely many reactions that can take place, so there is some minimum positive value the the dot product with $v$ can increase at each stage. Therefore, $v\cdot x^p\to\infty$ as $p\to\infty$; let $p$ be large enough that $v\cdot x^p>C$. But $(x^p,i)$ is reachable from $(x^p,i_p)$ because $Q$ is irreducible, so taking $x^0=x^p$ the claim is proven.
\end{proof}
\end{lemma}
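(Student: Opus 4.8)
The plan is to build, from the initial state $(x,i)$, a finite path in the state space along which the $v$-weighted mass $v\cdot x$ strictly increases at each step by a uniformly positive amount, so that after enough steps it exceeds $C$; a final environment switch then returns the chain to environment $i$. The engine is Lemma \ref{lem:generator positive}, which guarantees $\mathcal L_\kappa h_\kappa(\,\cdot\,,j)>0$ once $v\cdot x\ge b$, together with the irreducibility of $Q$, which lets us move freely between environments. First I would invoke Lemma \ref{lem:v not orthogonal to reactions} on the finite set $\Xi$ of reaction vectors $y'-y$ (over all environments) that meet $\supp(v)$, so that without loss of generality $v\cdot(y'-y)\ne0$ for every such reaction, while the support and positivity properties of $v$ are preserved. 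I would then record a sign-matching fact: since $\varphi^i_m=v_m+\kappa^{-1}z^m_i$ with $z^m_i=0$ whenever $m\notin\supp(v)$, we have $\supp(\varphi^i)=\supp(v)$ and $\varphi^i\cdot\xi\to v\cdot\xi$ as $\kappa\to\infty$; hence for $\kappa$ large the sign of $\varphi^i\cdot(y'-y)$ agrees with that of $v\cdot(y'-y)$ for every reaction, the only delicate case $v\cdot(y'-y)=0$ forcing $\supp(v)\cap\supp(y'-y)=\emptyset$ and thus $\varphi^i\cdot(y'-y)=0$ as well.

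The crux is a bootstrapping claim: for \emph{any} $x$ with $\supp(v)\cap\supp(x)\ne\emptyset$, some environment $i$ admits a reaction $y\to y'\in\mathcal R_i$ with $\lambda_{i,y\to y'}(x)>0$ and $v\cdot(y'-y)>0$. The difficulty is that Lemma \ref{lem:generator positive} only provides positive drift in the large regime $v\cdot x\ge b$, whereas here $v\cdot x$ may be tiny. To bridge this gap I would pick $m\in\supp(v)\cap\supp(x)$ and pad, setting $z:=x+\ceil{b/v_m}e^m$ so that $v\cdot z>b$. Lemma \ref{lem:generator positive} then gives $\mathcal L_\kappa h_\kappa(z,j)>0$ for all $j$, so in each environment at least one transition strictly increases $h_\kappa$. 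Choosing the environment $i$ that maximizes $h_\kappa(z,\cdot)$ rules out an environment switch as the increasing move, leaving a reaction $y\to y'\in\mathcal R_i$ with $h_\kappa(z+y'-y,i)>h_\kappa(z,i)$; by the definition of $h_\kappa$ this is exactly $\varphi^i\cdot(y'-y)>0$, and the sign-matching fact upgrades it to $v\cdot(y'-y)>0$. Finally I would transport this reaction back to $x$: because inputs are at-most-monomolecular, $\lambda_{i,y\to y'}(z)>0$ depends only on the single input coordinate $l$, and either $l\ne m$ (so $x_l=z_l$) or $l=m$ (so $x_m>0$ by choice of $m$); in both cases $\lambda_{i,y\to y'}(x)>0$.

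With the bootstrapping claim in hand, the iteration is routine. From $(x,i)$ I would switch, using irreducibility of $Q$, into the environment $i_1$ supplied by the claim, fire the reaction to reach $x^1=x+(y'-y)$ with $v\cdot x^1>v\cdot x$, and repeat; the support condition is preserved since $v\cdot x^1>v\cdot x>0$. Because there are finitely many reaction vectors, the positive increments $v\cdot(y'-y)$ are bounded below by a fixed constant, so $v\cdot x^p\to\infty$; I would stop at the first $p$ with $v\cdot x^p>C$ and then switch back to environment $i$, again by irreducibility, obtaining the desired $(x^0,i)=(x^p,i)$. I expect the main obstacle to be precisely the bootstrapping step: converting the purely analytic drift estimate of Lemma \ref{lem:generator positive}, which speaks only to large states, into a combinatorial statement about which reactions are \emph{enabled} at a possibly small state. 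The padding-and-transport device, and in particular the at-most-monomolecular hypothesis that keeps a reaction enabled after padding a single coordinate, is what makes this conversion work.
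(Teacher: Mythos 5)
Your proposal is correct and follows essentially the same route as the paper's proof: the same reduction via Lemma \ref{lem:v not orthogonal to reactions}, the same sign-matching argument for $\varphi^i$ versus $v$, the same padding trick $z=x+\ceil{b/v_m}e^m$ combined with choosing the $h_\kappa$-maximizing environment to extract a reaction with $v\cdot(y'-y)>0$, the same at-most-monomolecular transport of admissibility back to $x$, and the same iteration using irreducibility of $Q$. No gaps.
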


We now assemble Lemmas \ref{lem:generator positive} and \ref{lem:v cdot x^0>C} to prove the theorem.

\begin{proof}[Proof of Theorem \ref{thm:fast-switching-transience}]
For $m\in[d]$ let $z^m$ be as in the statement of Lemma \ref{lem:generator positive}; similarly, let $I:=\supp(v)$, $h_\kappa$, and $\mathcal L_\kappa$ be as in the statement of Lemma \ref{lem:generator positive}. As in the proof of both the previous two lemmas, let $\varphi^i=(\varphi^i_1,\cdots,\varphi^i_d)$ be the vector defined by $\varphi^i_m:=v_m +\kappa^{-1} z^m_i$. Recall that $z^m\in\RR^n_{\ge0}$ was defined so that $v_m=0$ iff $z^m_i=0$. It follows that $\varphi^i_m\ge0$ for each $i,m$ and so $h_\kappa$ is a bounded function. Separately, it also follows that there exists a constant $c>1$ such that for all $m$ we have $v_m\le \varphi^i_m\le c v_m$. Let $c$ be such; notice in particular that
\begin{align*}
    v\cdot x\le \varphi^i\cdot x\le c (v\cdot x)
\end{align*}
for any $x\in\ZZ^d_{\ge0}$.

Let $x$ be such that $\supp(v)\cap\supp(x)\ne\emptyset$, and fix $i$. By Lemma \ref{lem:v cdot x^0>C}, there exists an $x^0$ such that $(x^0,i)$ is reachable from $(x,i)$ and $v\cdot x^0>c\max\{b,v\cdot x\}$ where $c$ is the constant just discussed and $b$ is the constant given by Lemma \ref{lem:generator positive}. Let $B=\{(z,j)\in\ZZ^d_{\ge0}\times[n]: v\cdot z\le \max\{b,v\cdot x\}\}$. Then by Lemma \ref{lem:generator positive}, we have $\mathcal L_\kappa h_\kappa(z,j)>0$ for every $(z,j)\notin B$.

Suppose that $(z,j)\in B$. Then
\begin{align*}
    \varphi^j\cdot z&\le c(v\cdot z)\le c\max\{b,v\cdot x\}.
\end{align*}
But by choice of $x^0$ we have
\[
c\max\{b,v\cdot x\}<v\cdot x^0\le \varphi^i\cdot x^0,
\]
so it follows that
\begin{align*}
    \varphi^j\cdot z
    &\le c\max\{b,v\cdot x\}
    <\varphi^i\cdot x^0\\
    h_\kappa(z,j)
    &\le 1-\frac1{1+c\max\{b,v\cdot x\}}
    <h_\kappa(x^0,i)
\end{align*}
Therefore, $\sup_{(z,j)\in B} h_\kappa(z,j)<h_\kappa(x^0,i)$. So by Theorem \ref{thm:Lyapunov-transience}, with positive probability the process never returns to $B$ from state $(x^0,i)$. But by construction $(x,i)\in B$, so we see that $(x^0,i)$ is a state reachable from $(x,i)$ from which with positive probability the process never returns to state $(x,i)$. It follows that $(x,i)$ is transient.

To see that $(x,i)$ is actually evanescent, notice that $B$ contains all states $(z,j)$ with $\supp(v)\cap\supp(z)=\emptyset$. In particular, every state not in $B$ is transient by above. Therefore, on the event that the process reaches $(x^0,i)$ and then never returns to $B$, the process never enters a recurrent state. By above, this event has positive probability when the process is started from state $(x,i)$, so the state $(x,i)$ is evanescent by Proposition \ref{prop:evanescent}.
\end{proof}

\subsection{Slow Switching}

Our fourth and final main theorem gives conditions under which the individual CRNs in each environment are unstable, and the mixed process inherits this behavior for slow switching rates.

\begin{theorem}\label{thm:slow-switching-transience}
In the \nameref{setting} Setting, suppose additionally that each CRN is endowed with mass-action kinetics and has at-most-monomolecular reactions. Suppose for each $i$ there exists a vector $v^i\in\RR^d_{\ge0}$ such that $(v^iM_i)_m>0$ for each $m$ such that $v^i_m>0$, where $(v^iM_i)_m$ is the $m$-th entry of the vector $v^iM_i$. Suppose further that $\supp(v^i)=\supp(v^j)$ for each $i,j\in[n]$. Then as long as $\kappa>0$ is sufficiently small, $(x,i)$ is an evanescent state for the mixed Markov chain $Z_\kappa$ whenever $\supp(v^i)\cap\supp(x)\ne\emptyset$.
\end{theorem}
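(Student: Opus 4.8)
The plan is to mirror the proof of Theorem \ref{thm:fast-switching-transience}, replacing the single averaged vector $v$ by the environment-dependent vectors $v^i$ and using the common-support hypothesis to keep the Lyapunov function comparable across environments. Concretely, I would set $I:=\supp(v^1)=\cdots=\supp(v^n)$ and work with the bounded function
\[
h(x,i)=1-\frac{1}{1+v^i\cdot x}.
\]
Since all the $v^i$ are nonnegative with the same support $I$, there is a constant $c\geq 1$ with $v^i_m\leq c\,v^j_m$ for every $i,j\in[n]$ and $m\in I$; consequently $c^{-1}(v^j\cdot x)\leq v^i\cdot x\leq c\,(v^j\cdot x)$ and $\abs{v^j\cdot x-v^i\cdot x}\leq c\,(v^i\cdot x)$ for all $x\in\ZZ^d_{\ge0}$. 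This comparability is the slow-switching substitute for the role played by Lemma \ref{lem:z^m} in the fast regime, where the environment corrections were instead made small by taking $\kappa$ large.

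The heart of the argument is a generator-positivity lemma analogous to Lemma \ref{lem:generator positive}: for $\kappa$ small enough there is a constant $b$ (uniform in $i$) with $\mathcal L_\kappa h(x,i)>0$ whenever $v^i\cdot x\geq b$. I would split the reaction sum as in Lemma \ref{lem:generator positive} into $\widetilde{\mathcal R}_i$ (reactions whose single input species lies in $I$) and its complement; for $y\to y'\notin\widetilde{\mathcal R}_i$ one has $y_m=0$ for all $m\in I$, so $v^i\cdot(y'-y)\geq 0$ and the corresponding terms are nonnegative and may be dropped. Multiplying $\mathcal L_\kappa h(x,i)$ by $(1+v^i\cdot x)^2$ and discarding bounded differences (here the at-most-monomolecular mass-action assumption makes the $o(\norm{x}_{\ell^1})$ term genuinely $O(1)$), the surviving leading part is
\[
\sum_{j\ne i}\kappa q_{ij}\,\frac{1+v^i\cdot x}{1+v^j\cdot x}\,(v^j\cdot x-v^i\cdot x)+\sum_{m\in I}(v^iM_i)_m x_m.
\]
The reaction sum is bounded below by $\alpha\,(v^i\cdot x)$ with $\alpha:=\min_i\min_{m\in I}(v^iM_i)_m/v^i_m>0$ (using $(v^iM_i)_m>0$ on $I$), whereas the comparability bounds above force the switching sum to be at most $\kappa c^2\abs{q_{ii}}(v^i\cdot x)$ in absolute value. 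Hence the whole expression is at least $(\alpha-\kappa C)(v^i\cdot x)-O(1)$ for a constant $C$ depending only on $c$ and $Q$, which tends to $+\infty$ as $v^i\cdot x\to\infty$ provided $\kappa<\alpha/C$. Taking $\kappa$ below this (finite) threshold produces the claimed $b$.

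With generator positivity in hand the conclusion follows exactly as in the proof of Theorem \ref{thm:fast-switching-transience}. Using the one-environment reachability statement (Corollary \ref{cor:one-environment-lemma}), from any $(x,i)$ with $\supp(v^i)\cap\supp(x)\ne\emptyset$ I would reach, within environment $i$, a state $(x^0,i)$ with $v^i\cdot x^0>\max\{b,v^i\cdot x\}$. Setting $B:=\{(z,j):v^j\cdot z\leq\max\{b,v^i\cdot x\}\}$, every $(z,j)\in B$ satisfies $h(z,j)\leq 1-(1+\max\{b,v^i\cdot x\})^{-1}<h(x^0,i)$, while $\mathcal L_\kappa h(z,j)>0$ for $(z,j)\notin B$ (as then $v^j\cdot z>b$); Theorem \ref{thm:Lyapunov-transience} then shows that the process started at $(x^0,i)$ avoids $B$ forever with positive probability, so $(x,i)$, which lies in $B$, is transient. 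Since $B$ contains every state whose supports have empty intersection with $I$, each state off $B$ is transient, and the same avoidance event witnesses that the process never enters a recurrent state, so evanescence follows from Proposition \ref{prop:evanescent}.

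I expect the generator-positivity step to be the main obstacle, precisely because the switching and reaction contributions to $(1+v^i\cdot x)^2\mathcal L_\kappa h$ are both linear in $x$ and hence of the same order: one cannot simply send $v^i\cdot x\to\infty$ to render the (possibly sign-indefinite) switching term negligible, as was effectively possible in the fast regime. The resolution is quantitative---the common-support hypothesis forces the switching term to be at most a constant multiple of $\kappa\,(v^i\cdot x)$, so that the strictly positive reaction term $\alpha(v^i\cdot x)$ dominates once $\kappa$ is small. Without common supports this comparison breaks down (cf.\ the behavior in Example \ref{ex:disjoint species}), which is exactly why the hypothesis $\supp(v^i)=\supp(v^j)$ is indispensable here.
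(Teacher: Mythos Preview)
Your proposal is correct and follows essentially the same route as the paper: the same Lyapunov function $h(x,i)=1-(1+v^i\cdot x)^{-1}$, the same split into $\widetilde{\mathcal R}_i$, the same reachability via Corollary~\ref{cor:one-environment-lemma}, the same choice of $B$, and the same evanescence conclusion through Theorem~\ref{thm:Lyapunov-transience} and Proposition~\ref{prop:evanescent}. The one place you deviate is in the generator-positivity estimate: where the paper introduces coordinate-wise constants $a^m_{i,j}=\min/\max_l (v^i_l/v^j_l)$ to bound the ratio $(1+v^i\cdot x)/(1+v^j\cdot x)$ and then argues that each leading coefficient $\sum_{j\ne i}\kappa q_{ij}a^m_{i,j}(v^j_m-v^i_m)+(v^iM_i)_m$ is positive for small $\kappa$, you instead use a single comparability constant $c$ to bound the entire switching contribution by $\kappa c^2|q_{ii}|(v^i\cdot x)$ and compare against the reaction term $\alpha(v^i\cdot x)$. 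This is a cleaner shortcut that exploits the common-support hypothesis more directly and gives an explicit threshold $\kappa<\alpha/C$; the paper's version stays closer in form to the fast-switching proof (Lemma~\ref{lem:generator positive}) but is otherwise equivalent.
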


\begin{remark}
    Theorem \ref{thm:slow-switching-transience}, like Theorem \ref{thm:fast-switching-transience}, differs from the recurrence theorems in that we assume that the reactions are linear instead of merely assuming that the generator is linear. Example \ref{ex:trans thms don't apply with superlinear reactions}, viewed as a CRN in one environment, shows that this assumption cannot be completely dropped from either theorem. \hfill //
\end{remark}

\begin{remark}
    The assumption in Theorem \ref{thm:slow-switching-transience} that all the vectors $v^i$ have the same support is worth of further attention. From Example \ref{ex:disjoint species}, we see that the assumption cannot be dropped entirely. However, we cannot rule out the possibility that it is enough to instead assume that $\bigcap_i\supp(v^i)\ne\emptyset$. See Conjecture \ref{conj: general conjecture} and the surrounding discussion.\hfill //
\end{remark}

Before proceeding with the proof of Theorem \ref{thm:slow-switching-transience}, we need one additional result. Just like Corollary \ref{cor:one-environment-ergodicity} was the specialization of Theorems \ref{thm:slow-switching-ergodicity} and \ref{thm:fast-switching-ergodicity} to the case with one environment (and just like Corollary \ref{cor:one-environment-transience} will be the specialization of Theorems \ref{thm:fast-switching-transience} and \ref{thm:slow-switching-transience} to the case with one environment), the following is best viewed as the one-environment case of Lemma \ref{lem:v cdot x^0>C}.

\begin{cor}\label{cor:one-environment-lemma}
Suppose that $M$ is a $d\times d$ matrix which is associated to a mass-action CRN in the sense of equation \eqref{eq:associated CRN}; suppose further that the CRN has at-most-monomolecular reactions. Suppose that there exists a vector $v\in\RR_{\ge0}^d$ such that $(vM)_m>0$ for each $m$ such that $v_m>0$, where as above $(vM)_m$ is the $m$-th entry of the vector $vM$. Then for each constant $C\in\RR$ and each $x\in\ZZ^d_{\ge0}$ such that $\supp(v)\cap\supp(x)\ne\emptyset$, there exists an $x^0\in\ZZ^d_{\ge0}$ such that $x^0$ is reachable from $x$ and $v\cdot x^0>C$.
\begin{proof}
We can view this as a switched process in one environment with $M_1=M$ and switching matrix $Q=(0)$. Applying Lemma \ref{lem:v cdot x^0>C}, the result is immediate.
\end{proof}
\end{cor}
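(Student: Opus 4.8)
The plan is to recognize this corollary as precisely the one-environment specialization of Lemma \ref{lem:v cdot x^0>C}, and to obtain it by a direct reduction. First I would instantiate the \nameref{setting} Setting with $n=1$, taking the single matrix $M_1:=M$ and the degenerate switching matrix $Q:=(0)$, the $1\times 1$ zero matrix. The CRN associated to $M_1$ is the given CRN, which by hypothesis is mass-action and at-most-monomolecular, so the additional assumptions of Theorem \ref{thm:fast-switching-transience} (and hence of Lemma \ref{lem:v cdot x^0>C}) are in force.

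Next I would verify that the hypotheses transfer cleanly. The matrix $Q=(0)$ is a valid transition rate matrix for the (trivially irreducible) Markov chain on the single state $\{1\}$, and its unique stationary distribution is $w=(1)$. Consequently $M=\sum_i w_iM_i=M_1=M$, so the averaged matrix coincides with $M$ itself. The hypothesis of the corollary, namely that there is $v\in\RR^d_{\ge0}$ with $(vM)_m>0$ for every $m\in\supp(v)$, is therefore exactly the instability hypothesis required to apply Lemma \ref{lem:v cdot x^0>C}.

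I would then fix any $\kappa$ large enough for Lemma \ref{lem:v cdot x^0>C} to apply to this one-environment mixed process $Z_\kappa$; such a $\kappa$ exists by that lemma. The key observation is that reachability is unaffected by $\kappa$ and by the (absent) environment dynamics: since there is only one environment, the generator $\mathcal L_\kappa$ contains no environment-switching transitions, so $(x^0,1)$ is reachable from $(x,1)$ in $Z_\kappa$ iff $x^0$ is reachable from $x$ under the reactions of the CRN. Given $C\in\RR$ and $x$ with $\supp(v)\cap\supp(x)\ne\emptyset$, Lemma \ref{lem:v cdot x^0>C} applied with $i=1$ produces $x^0\in\ZZ^d_{\ge0}$ with $(x^0,1)$ reachable from $(x,1)$ and $v\cdot x^0>C$; translating reachability back to the CRN gives exactly the desired conclusion.

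The proof presents essentially no obstacle beyond bookkeeping: the only points that warrant a line of checking are that a one-state chain is legitimately irreducible (so that the \nameref{setting} Setting genuinely applies and $w=(1)$ is well defined) and that reachability in the product chain restricted to a single environment coincides with reachability in the underlying CRN. Both are immediate. I would emphasize that no new Lyapunov construction is needed here; the entire content is inherited from Lemma \ref{lem:v cdot x^0>C}, which is why it is stated as a corollary.
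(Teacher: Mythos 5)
Your proposal is correct and takes essentially the same route as the paper, which likewise views the CRN as a one-environment mixed process with $M_1=M$ and $Q=(0)$ and then invokes Lemma \ref{lem:v cdot x^0>C}. The additional bookkeeping you spell out (the one-state chain is trivially irreducible with $w=(1)$, the averaged matrix is $M$ itself, and reachability in $Z_\kappa$ restricted to a single environment coincides with reachability in the underlying CRN and is independent of $\kappa$) is precisely what the paper's phrase ``the result is immediate'' leaves implicit.
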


\begin{proof}[Proof of Theorem \ref{thm:slow-switching-transience}]
For each $i\in[n]$, let $v^i$ be the vector given by the statement of the theorem. For each $x\in\ZZ^d_{\ge0}$ and each $i$, define
\[
    h(x,i)=1-\frac1{1+v^i\cdot x}.
\]
Let $\mathcal L_\kappa$ denote the generator of $Z_\kappa$.

The proof will be divided into two parts. In the first part, we will show that for each small enough $\kappa$, there exists some constant $b$ (depending implicitly on $\kappa$, but not on $x$ or $i$) such that $\mathcal L_\kappa h(x,i)>0$ whenever $v^i\cdot x\ge b$. In the second part, we will show how the result in the first part implies evanescence for the claimed states.

\textbf{Part 1:} Let $I=\supp(v^i)$; note that, by assumption, despite superficial appearances $I$ does not depend on $i$. For $i=1,\cdots,n$, let $\widetilde{\mathcal R}_i$ denote the subset of $\mathcal R_i$ consisting of reactions which do not take the $m$-th species as an input for any $m\notin I$:
\[
\widetilde{\mathcal R}_i:=\{y\to y'\in\mathcal R_i:\supp(y)\subseteq I\}
\]

We claim that if $y\to y'\notin\widetilde{\mathcal R}_i$, then $h(x-y+y',i)\ge h(x,i)$. Indeed, suppose that $y\to y'\notin\widetilde{\mathcal R}_i$. If $m\notin I$ then $v^i_m=0$ and if $m\in I$ we have $y'_m-y_m=y'_m\ge0$ (it is important here that $y$ is at-most-monomolecular). It follows that each term in the dot product in the denominator of $h(x+y'-y,i)$ is no smaller than the corresponding term in $h(x,i)$, so $h(x-y+y',i)\ge h(x,i)$ as claimed.

Notice that for each $x$ and each $i$,
\begin{align*}
    \mathcal L_\kappa h(x,i)
    &=\sum_{j\ne i}\kappa q_{ij}(h(x,j)-h(x,i))
    +\sum_{y\to y'\in\mathcal R_i}\lambda_{i,y\to y'}(x)(h(x-y+y',i)-h(x,i))\\
    &\ge\sum_{j\ne i}\kappa q_{ij}(h(x,j)-h(x,i))
    +\sum_{y\to y'\in\widetilde{\mathcal R}_i}\lambda_{i,y\to y'}(x)(h(x-y+y',i)-h(x,i))\\
    &=\frac1{1+v^i\cdot x}\left(\sum_{j\ne i}\frac{\kappa q_{ij}(v^j-v^i)\cdot x}{1+v^j\cdot x}
    +\sum_{y\to y'\in\widetilde{\mathcal R}_i}\frac{\lambda_{i,y\to y'}(x)v^i\cdot (y'-y)}{1+v^i\cdot (x-y+y')}\right)\\
    &=:g_\kappa(x,i),
\end{align*}
where the inequality follows from the discussion in the previous paragraph and the last line is the definition of $g_\kappa$.

Consider the quantity
\begin{align*}
    *:=\frac{1+v^i\cdot x}{1+v^j\cdot x}\left(\sum_{j\ne i}\kappa q_{ij}(v^j-v^i)\cdot x\right)
    +\sum_{y\to y'\in\widetilde{\mathcal R}_i}\lambda_{i,y\to y'}(x)v^i\cdot (y'-y)
\end{align*}
The difference between $*$ and $(1+v^i\cdot x)^2g_\kappa (x,i)$ is
\begin{align*}
    *-(1+v^i\cdot x)^2g_\kappa(x,i)
    &=\sum_{y\to y'\in\widetilde{\mathcal R}_i}\lambda_{i,y\to y'}(x)v^i\cdot (y'-y)\left(1-\frac{1+v^i\cdot x}{1+v^i\cdot (x-y+y')}\right)\\
    &=\sum_{y\to y'\in\widetilde{\mathcal R}_i}\lambda_{i,y\to y'}(x)v^i\cdot (y'-y)\left(\frac{v^i\cdot (y'-y)}{1+v^i\cdot (x-y+y')}\right).
\end{align*}
The term $\lambda_{i,y\to y'}(x)v^i\cdot (y'-y)$ only depends on $x$ via $\lambda_{i,y\to y'}(x)$, and since the mass-action CRN associated to $M_i$ has at-most-monomolecular reactions we know that $\lambda_{i,y\to y'}(x)$ is at most linear in $x$. Moreover, by definition of $\widetilde{\mathcal R}_i$ we know that in the linear function $\lambda_{i,y\to y'}(x)$ the coefficient of $x_m$ is zero for each $m\notin I$.

Meanwhile, the numerator of the fraction in the last line does not depend on $x$, and the denominator is (affine) linear in $x$ and each $x_m$ has a non-negative coefficient which is positive iff $m\in I$. It follows that the difference between $*$ and $(1+v^i\cdot x)^2g_\kappa(x,i)$ is bounded as $x$ varies. We conclude that if we are able to show that $*\to\infty$ as $v^i\cdot x\to\infty$, it will follow that $(1+v^i\cdot x)^2\mathcal L_\kappa h(x,i)\to\infty$ as well.

Now we turn to analyzing $*$. Notice that
\begin{align*}
    \min\left\{1,\min_{l\in I}\left\{(v^j_l)^{-1}v^i_l\right\}\right\}\left(1+\sum_{m=1}^d v^j_m x_m\right)
    &\le 1+\sum_{m=1}^d v^i_m x_m\\
    \min\left\{1,\min_{l\in I}\left\{(v^j_l)^{-1}v^i_l\right\}\right\}\left(1+v^j\cdot x\right)
    &\le 1+v^i\cdot x\\
    \min\left\{1,\min_{l\in I}\left\{(v^j_l)^{-1}v^i_l\right\}\right\}
    &\le \frac{1+v^i\cdot x}{1+v^j\cdot x}
\end{align*}
and similarly
\begin{align*}
    \frac{1+v^i\cdot x}{1+v^j\cdot x}
    &\le\max\left\{1,\max_{l\in I}\left\{(v^j_l)^{-1}v^i_l\right\}\right\}
\end{align*}
Therefore, if we define
\begin{align*}
    a^m_{i,j}=\begin{cases}\displaystyle
        \min\left\{1,\min_{l\in I}\left\{(v^j_l)^{-1}v^i_l\right\}\right\} & \qquad v^j_m-v^i_m>0\\
        1   &   \qquad v^j_m-v^i_m=0\\\displaystyle
        \max\left\{1,\max_{l\in I}\left\{(v^j_l)^{-1}v^i_l\right\}\right\} & \qquad v^j_m-v^i_m<0
    \end{cases},
\end{align*}
then
\begin{align*}
    *&=\frac{1+v^i\cdot x}{1+v^j\cdot x}\left(\sum_{j\ne i}\kappa q_{ij}\sum_{m=1}^d(v^j_m-v^i_m)x_m\right)
    +\sum_{m=1}^d v^i_m \sum_{y\to y'\in\widetilde{\mathcal R}_i}\lambda_{i,y\to y'}(x)(y'_m-y_m)\\
    &\ge \sum_{j\ne i}\kappa q_{ij}\sum_{m=1}^d a^m_{i,j}(v^j_m-v^i_m)x_m
    +\sum_{m=1}^dv^i_m\sum_{y\to y'\in\widetilde{\mathcal R}_i}\lambda_{i,y\to y'}(x)(y'_m-y_m).
\end{align*}
If the sum over $\widetilde{\mathcal R}_i$ in the last line above were over $\mathcal R_i$ instead, then it would be
\begin{align*}
    \sum_{m=1}^dv^i_m\sum_{y\to y'\in\mathcal R_i}\lambda_{i,y\to y'}(x)(y'_m-y_m)
    &=\sum_{m=1}^dv^i_m(M_ix)_m+o(\norm{x}_{\ell^1})\\
    &=v^i\cdot (M_ix)+o(\norm{x}_{\ell^1})\\
    &=(v^iM_i)\cdot x+o(\norm{x}_{\ell^1})\\
    &=\sum_{m=1}^d(v^iM_i)_m x_m+o(\norm{x}_{\ell^1}).
\end{align*}
But the distinction between $\widetilde{\mathcal R}_i$ and $\mathcal R_i$ is exactly that the former leaves out the reactions whose rate is a multiple of $x_m$ for some $m\notin I$. Therefore,
\begin{align*}
    \sum_{m=1}^dv^i_m\sum_{y\to y'\in\widetilde{\mathcal R}_i}\lambda_{i,y\to y'}(x)(y'_m-y_m)
    &=\sum_{m\in I}(v^iM_i)_m x_m+o(\norm{x}_{\ell^1}).
\end{align*}
Plugging this into the above and applying the fact that $v^j_m=0=v^i_m$ for each $i,j$ and each $m\notin I$, we get
\begin{align*}
    *&\ge \sum_{j\ne i}\kappa q_{ij}\sum_{m=1}^d a^m_{i,j}(v^j_m-v^i_m)x_m
    +\sum_{m=1}^dv^i_m\sum_{y\to y'\in\widetilde{\mathcal R}_i}\lambda_{i,y\to y'}(x)(y'_m-y_m)\\
    &=\sum_{j\ne i}\kappa q_{ij}\sum_{m=1}^d a^m_{i,j}(v^j_m-v^i_m)x_m
    +\sum_{m\in I}(v^iM_i)_m x_m+o(\norm{x}_{\ell^1})\\
    &=\sum_{j\ne i}\kappa q_{ij}\sum_{m\in I} a^m_{i,j}(v^j_m-v^i_m)x_m
    +\sum_{m\in I}(v^iM_i)_m x_m+o(\norm{x}_{\ell^1})\\
    &=\sum_{m\in I}\left(\sum_{j\ne i}\kappa q_{ij}a^m_{i,j}(v^j_m-v^i_m)+(v^iM_i)_m\right)x_m+o(\norm{x}_{\ell^1}).
\end{align*}
Therefore, to show that $*\to\infty$ as $v^i\cdot x\to\infty$, it suffices to show that
\[
    \sum_{j\ne i}\kappa q_{ij} a^m_{i,j}(v^j_m-v^i_m)+(v^iM_i)_m>0
\]
for each $m\in I$. But $(v^iM_i)_m>0$ for each $m\in I$ by assumption, and each term $\kappa q_{ij} a^m_{i,j}(v^j_m-v^i_m)$ above is converging to zero as $\kappa\to0$. Therefore, as long as $\kappa$ is sufficiently small we have that $*\to\infty$ as $v^i\cdot x\to\infty$. As discussed above, this implies that $(1+v^i\cdot x)^2\mathcal L_\kappa h(x,i)\to\infty$ as $v^i\cdot x\to\infty$. Thus as long as $v^i\cdot x$ is large enough $(1+v^i\cdot x)^2\mathcal L_\kappa h(x,i)$, and hence $\mathcal L_\kappa h(x,i)$, will be positive.

The argument above was for a particular value of $i$; applying it for each $i$ we get that as long as $\kappa$ is small enough, there is some constant $b$ (depending only on $\kappa$) such that $\mathcal L_\kappa h(x,i)>0$ whenever $v^i\cdot x>b$ (here it is important that there are only finitely many environments $i$).

\textbf{Part 2:} From Part 1, we know that there exists a number $b$ such that $\mathcal L_\kappa h(x,i)>0$ whenever $v^i\cdot x>b$. By Corollary \ref{cor:one-environment-lemma}, if $(x,i)$ is such that $\supp(v^i)\cap\supp(x)\ne\emptyset$, then there exists a $x^0\in\ZZ^d_{\ge0}$ such that $(x^0,i)$ is reachable from $(x,i)$ (indeed, reachable entirely by reactions in $\mathcal R_i$) and such that $v^i\cdot x^0>\max\{b,v^i\cdot x\}$. Let $x^0$ be such.

Now consider the set $B=\{(z,j)\in\ZZ^d_{\ge0}\times[n]:v^j\cdot z\le\max\{b,v^i\cdot x\}\}$. By choice of $b$, we have that $\mathcal L_\kappa h(z,j)>0$ for every $(z,j)\notin B$. But if $(z,j)\in B$ then
\[
v^j\cdot z\le \max\{b,v^i\cdot x\}<v^i\cdot x^0
\]
and hence
\[
h(z,j)\le1-\frac1{1+\max\{b,v^i\cdot x\}}<h(x^0,i).
\]
Therefore, $\sup_{(z,j)\in B}h(z,j)<h(x^0,i)$. So by Theorem \ref{thm:Lyapunov-transience}, with positive probability the process never returns to $B$ from state $(x^0,i)$. But by construction $(x,i)\in B$, so we see that $(x^0,i)$ is a state reachable from $(x,i)$ from which with positive probability the process never returns to state $(x,i)$. It follows that $(x,i)$ is transient, as claimed.

To see that $(x,i)$ is actually evanescent, notice that $B$ contains all states $(z,j)$ with $\supp(v^j)\cap\supp(z)=\emptyset$. In particular, every state not in $B$ is transient by above. Therefore, on the event that the process reaches $(x^0,i)$ and then never returns to $B$, the process never enters a recurrent state. By above, this event has positive probability when the process is started from state $(x,i)$, so the state $(x,i)$ is evanescent by Proposition \ref{prop:evanescent}.
\end{proof}

\subsection{One environment}

Just like the stability theorems, for instability we also have a corollary in one environment which is worth stating.

\begin{cor}\label{cor:one-environment-transience}
Suppose that $M$ is a $d\times d$ matrix which is associated to a CRN in the sense of equation \eqref{eq:associated CRN}; suppose further that the CRN is endowed with mass-action kinetics and has at-most-monomolecular reactions. Suppose that there exists a vector $v\in\RR_{\ge0}^d$ such that $(vM)_m>0$ for each $m$ such that $v_m>0$, where as above $(vM)_m$ is the $m$-th entry of the vector $vM$. Then $x$ is a evanescent state for the CRN for each $x\in\ZZ^d_{\ge0}$ such that $\supp(v)\cap\supp(x)\ne\emptyset$.
\begin{proof}
    Fix $x\in\ZZ^d_{\ge0}$ such that $\supp(v)\cap\supp(x)\ne\emptyset$. We can view the CRN as a mixed process in one environment with $M_1=M$ and switching matrix $Q=(0)$. So if we consider the mixed process with matrix $\kappa Q$, then Theorem \ref{thm:slow-switching-transience} says that $x$ is evanescent for the process if $\kappa$ is small enough and Theorem \ref{thm:fast-switching-transience} says that $x$ is evanescent for the process if $\kappa$ is large enough. But $\kappa Q=Q$ in this case, so either theorem is enough to conclude the claimed result.
\end{proof}
\end{cor}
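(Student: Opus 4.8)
The plan is to obtain this corollary by the same reduction used for Corollary~\ref{cor:one-environment-ergodicity} and Corollary~\ref{cor:one-environment-lemma}: reinterpret the single given CRN as a degenerate instance of the \nameref{setting} Setting in which there is exactly one environment. Concretely, I would take $n=1$, set $M_1:=M$, and let $Q:=(0)$ be the $1\times 1$ zero matrix. This is a legitimate transition rate matrix for the (trivially irreducible) Markov chain on $[1]=\{1\}$, its unique stationary distribution is $w=(1)$, and so the averaged matrix of the Setting is $\sum_i w_iM_i=w_1M_1=M$, matching the notation. The mass-action and at-most-monomolecular hypotheses of the corollary are exactly those required by the transience theorems for this single CRN, so the degenerate data $(n,M_1,Q)=(1,M,(0))$ genuinely satisfies every clause of the Setting.

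Next I would check that the hypotheses of the corollary feed directly into the hypotheses of the two transience theorems. For Theorem~\ref{thm:fast-switching-transience}, the required vector $v\in\RR^d_{\ge0}$ with $(vM)_m>0$ for each $m\in\supp(v)$ is precisely the $v$ supplied here. For Theorem~\ref{thm:slow-switching-transience}, with a single environment the family $\{v^i\}$ collapses to the single vector $v^1:=v$, the condition $(v^1M_1)_m=(vM)_m>0$ for $m\in\supp(v^1)$ holds by assumption, and the cross-environment support condition $\supp(v^i)=\supp(v^j)$ is vacuously true. In both theorems the admissibility requirement $\supp(v^i)\cap\supp(x)\ne\emptyset$ on the starting state becomes exactly $\supp(v)\cap\supp(x)\ne\emptyset$, which is what the corollary assumes.

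The only point needing a word of care is the role of $\kappa$. The theorems conclude evanescence only for $\kappa$ sufficiently large (Theorem~\ref{thm:fast-switching-transience}) or sufficiently small (Theorem~\ref{thm:slow-switching-transience}); but here the scaled generator uses $\kappa Q=\kappa(0)=Q$, so the mixed process $Z_\kappa$ does not depend on $\kappa$ at all. Hence either theorem applies for every $\kappa>0$, and I would simply invoke one of them to conclude that every state $(x,1)$ with $\supp(v)\cap\supp(x)\ne\emptyset$ is evanescent for $Z_\kappa$. Since the environment coordinate is constant, this says exactly that $x$ is an evanescent state for the original single-environment CRN, which is the claim.

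I do not anticipate any genuine obstacle: all of the analytic difficulty already lives in Theorems~\ref{thm:fast-switching-transience} and~\ref{thm:slow-switching-transience} (and in Lemmas~\ref{lem:z^m}, \ref{lem:v not orthogonal to reactions}, \ref{lem:generator positive}, and~\ref{lem:v cdot x^0>C} behind them), so this corollary is a pure bookkeeping reduction. The closest thing to a subtlety is verifying that the one-state chain with $Q=(0)$ satisfies irreducibility and has a unique stationary distribution, which makes the identity $\kappa Q=Q$ the key observation that removes the asymptotic restriction on $\kappa$.
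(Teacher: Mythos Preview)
Your proposal is correct and follows essentially the same approach as the paper: reinterpret the CRN as a one-environment mixed process with $M_1=M$ and $Q=(0)$, note that $\kappa Q=Q$ makes the asymptotic restriction on $\kappa$ vacuous, and invoke either Theorem~\ref{thm:fast-switching-transience} or Theorem~\ref{thm:slow-switching-transience}. Your version simply spells out more of the routine verifications (irreducibility of the one-state chain, the stationary distribution, the vacuous support condition) that the paper leaves implicit.
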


\section{Examples}\label{sec:Examples}

This first example illustrates Theorems \ref{thm:fast-switching-ergodicity} and \ref{thm:slow-switching-transience} by applying them in a case where the switched process is transient as long as the switching is slow enough but nevertheless is exponentially ergodic when the switching is fast enough.

\begin{example}\label{ex:switching ergodic monomolecular}
Let us consider the process randomly switching, with rate $\kappa$ in each direction, between the following two mass-action SRNs
\[
\mathcal{R}_1\colon
\begin{tikzcd}[column sep=3em,row sep=3em]
2S_2&&\\
S_2 \arrow[u,shift left=0.7ex,"2"] \arrow[d,shift right=0.7ex,swap,"1+\varepsilon"]  \arrow[drr,shift left=0ex,"1-\varepsilon"]  &&\\
0 \arrow[r,"1"] \arrow[u,shift right=0.7ex,swap,"1"] & S_1 \arrow[l,shift left=1ex,"4-\varepsilon"]   \arrow[uul,shift left=0.7ex,swap,"\varepsilon"] &2S_1
\end{tikzcd}
\qquad\qquad \mathcal{R}_2\colon
\begin{tikzcd}[column sep=3em,row sep=3em]
2S_2&&\\
S_2  \arrow[d,shift right=0.7ex,swap,"4-\varepsilon"]  \arrow[drr,shift left=0ex,"\varepsilon"]  &&\\
0 \arrow[r,"1"] \arrow[u,shift right=0.7ex,swap,"1"] & S_1 \arrow[r,swap,"2"] \arrow[l,shift left=1ex,"1+\varepsilon"]   \arrow[uul,swap,shift left=0.7ex,"1-\varepsilon"] &2S_1
\end{tikzcd}
\]
\end{example}

\begin{prop}\label{prop:switching ergodic monomolecular}
In Example \ref{ex:switching ergodic monomolecular}, $\mathcal R_1$ and $\mathcal R_2$ are evanescent whenever $\varepsilon\in(0,1)$. For any $\varepsilon\in(0,1)$, the mixed process is evanescent as long as $\kappa$ is small enough, but is exponentially ergodic as long as $\kappa$ is large enough.
\begin{proof}
Notice that the respective matrices in different individual environments as well as the stationary environment are given by 
\[
    M_1=\begin{bmatrix}-4&2\varepsilon\\ 2(1-\varepsilon)&0\end{bmatrix},\quad
    M_2=\begin{bmatrix}0&2(1-\varepsilon)\\ 2\varepsilon&-4\end{bmatrix},\quad
    M=\begin{bmatrix}-2&1\\ 1&-2\end{bmatrix}
\]
Note that the eigenvalues of $M_1$ and $M_2$ are $-2\pm2\sqrt{1 + \varepsilon - \varepsilon^2}$. For $\varepsilon\in(0,1)$ we have $\varepsilon>\varepsilon^2$, and hence the larger eigenvalue is positive. Since both off-diagonal entries of $M_1$ and $M_2$ are nonzero, it follows from Proposition \ref{prop:dec/inc direction}(ii) that there exist vectors $v^1,v^2\in\RR^2_{>0}$ such that $v^iM_i\in\RR^2_{>0}$. By Corollary \ref{cor:one-environment-transience} we conclude that both $\mathcal R_1$ and $\mathcal R_2$ are evanescent for any nonzero state; the existence of inflow reactions means this state is evanescent as well. Similarly, by Theorem \ref{thm:slow-switching-transience} we conclude that all states are evanescent for the mixed process whenever $\kappa$ is sufficiently small.

However, one sees that if $v=(1,1)$ then $vM=-v$, so by Theorem \ref{thm:fast-switching-ergodicity} the mixed process converges exponentially fast whenever $\kappa$ is sufficiently large. Since the state space is irreducible, it follows by definition that the mixed process is exponentially ergodic whenever $\kappa$ is sufficiently large.
\end{proof}
\end{prop}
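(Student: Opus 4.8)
The plan is to handle the three assertions separately, feeding the individual-environment matrices and the average matrix into, respectively, the one-environment transience corollary, the slow-switching transience theorem, and the fast-switching ergodicity theorem. First I would read the generator matrices $M_1,M_2$ off the two reaction graphs via \eqref{eq:associated CRN} (every source complex is $0$, $S_1$, or $S_2$, so both networks are mass-action and at-most-monomolecular, as those theorems require), and record the average $M=\tfrac12(M_1+M_2)=\left[\begin{smallmatrix}-2&1\\1&-2\end{smallmatrix}\right]$, where $w=(\tfrac12,\tfrac12)$ because the environment switches at equal rates in both directions.

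For the evanescence of each individual network I would compute the characteristic polynomial of $M_i$ and obtain the eigenvalues $-2\pm2\sqrt{1+\varepsilon-\varepsilon^2}$; since $\varepsilon(1-\varepsilon)>0$ for $\varepsilon\in(0,1)$, the larger eigenvalue is strictly positive, so each $M_i$ is Hurwitz-unstable. The structural point is that both off-diagonal entries of $M_i$ equal $2\varepsilon$ and $2(1-\varepsilon)$, hence are strictly positive for $\varepsilon\in(0,1)$, so $M_i$ is an irreducible Metzler matrix; by Proposition \ref{prop:dec/inc direction}(ii) this promotes Hurwitz instability to the existence of a strictly positive $v^i\in\RR^2_{>0}$ with $v^iM_i\in\RR^2_{>0}$. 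Corollary \ref{cor:one-environment-transience} then gives evanescence of $\mathcal R_i$ at every state $x$ with $\supp(v^i)\cap\supp(x)\ne\emptyset$; as $\supp(v^i)=\{1,2\}$, this is every nonzero state, and the inflow reactions $0\to S_1$ and $0\to S_2$ drive the process out of the zero state into an evanescent one, so $0$ is evanescent as well.

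For slow switching I would reuse the vectors $v^1,v^2$ just produced. Because both have full support $\{1,2\}$, the hypothesis $\supp(v^1)=\supp(v^2)$ of Theorem \ref{thm:slow-switching-transience} holds automatically, and $(v^iM_i)_m>0$ on $\supp(v^i)$ is exactly what was verified, so that theorem yields evanescence of the mixed process for all sufficiently small $\kappa$ (again the inflows dispose of the zero state). For fast switching I would exhibit the single witness $v=(1,1)$: a direct computation gives $vM=(-1,-1)=-v\in\RR^2_{<0}$, so Theorem \ref{thm:fast-switching-ergodicity} shows $Z_\kappa$ converges exponentially fast for all large $\kappa$. Since the inflow and degradation reactions make $\ZZ^2_{\ge0}$ irreducible in each environment and $Q$ is irreducible on $\{1,2\}$, the whole chain is irreducible, whence ``converges exponentially fast'' coincides with exponential ergodicity here.

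The routine parts are the two matrix–vector checks and the eigenvalue formula; the one genuinely load-bearing step is the passage, in each individual environment, from a positive eigenvalue of $M_i$ to a \emph{non-negative} witness $v^i$ with $v^iM_i>0$. A positive eigenvalue alone does not produce such a $v^i$ in general, so the argument leans essentially on the Metzler (Perron--Frobenius) structure guaranteed by the positivity of the off-diagonal rate constants for $\varepsilon\in(0,1)$. I expect this to be the main point to get right, together with the bookkeeping that all relevant witnesses have the full support needed to conclude evanescence at every nonzero state and, through Theorem \ref{thm:slow-switching-transience}, in the mixed process.
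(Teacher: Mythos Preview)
Your proposal is correct and follows essentially the same route as the paper: compute $M_1,M_2,M$, use the eigenvalue $-2+2\sqrt{1+\varepsilon-\varepsilon^2}>0$ together with irreducibility (positive off-diagonals) and Proposition~\ref{prop:dec/inc direction}(ii) to obtain strictly positive witnesses $v^i$, then invoke Corollary~\ref{cor:one-environment-transience}, Theorem~\ref{thm:slow-switching-transience}, and Theorem~\ref{thm:fast-switching-ergodicity} with $v=(1,1)$. You have also correctly identified the one non-routine step, namely that the Metzler/Perron--Frobenius structure is what upgrades Hurwitz instability to a positive increasing direction.
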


The next example serves to emphasize one difference between the recurrence theorems (Theorems \ref{thm:fast-switching-ergodicity} and \ref{thm:slow-switching-ergodicity}) and the transience theorems (Theorems \ref{thm:fast-switching-transience} and \ref{thm:slow-switching-transience}). Namely, this next example is an application of Theorem \ref{thm:fast-switching-ergodicity} in a case where the CRNs are not at-most-monomolecular, a possibility allowed for by the recurrence theorems but not the transience theorems.

\begin{example}\label{ex:switching ergodic non-monomolecular}
Consider the process randomly switching, with rate $\kappa$ in each direction, between the following two mass-action SRNs
\[
\mathcal{R}_1\colon
\begin{tikzcd}[column sep=3em,row sep=3em]
2S_2&&&\\
S_2 \arrow[u,shift left=0.7ex,"2"] \arrow[d,shift right=.7ex,swap,"1+\varepsilon"]  \arrow[drr,shift left=0ex,"1-\varepsilon"]  &&&\\
0 \arrow[r,"1"] \arrow[u,shift right=0.7ex,swap,"1"] & S_1 \arrow[l,shift left=1ex,"4-\varepsilon"]   \arrow[uul,shift left=0.7ex,swap,"\varepsilon"] &2S_1\arrow[l,"1"]\arrow[r,swap,"1"] &3S_1
\end{tikzcd}
\qquad\qquad \mathcal{R}_2\colon
\begin{tikzcd}[column sep=3em,row sep=3em]
3S_2&&\\
2S_2\arrow[u,"1"]\arrow[d,swap,"1"]&&\\
S_2  \arrow[d,shift right=0.7ex,swap,"4-\varepsilon"]  \arrow[drr,shift left=0ex,"\varepsilon"]  &&\\
0 \arrow[r,"1"] \arrow[u,shift right=0.7ex,swap,"1"] & S_1 \arrow[r,swap,"2"] \arrow[l,shift left=1ex,"1+\varepsilon"]   \arrow[uul,swap,shift left=0.7ex,"1-\varepsilon"] &2S_1
\end{tikzcd}
\]
\end{example}

\begin{prop}
In Example \ref{ex:switching ergodic non-monomolecular}, for any $\varepsilon\in(0,1)$, the mixed process is exponentially ergodic as long as $\kappa$ is large enough.
\begin{proof}
The only difference between Examples \ref{ex:switching ergodic monomolecular} and \ref{ex:switching ergodic non-monomolecular} are the reactions $S_1\leftarrow 2S_1\rightarrow 3S_1$ and $S_2\leftarrow 2S_2\rightarrow 3S_2$. The terms associated to these reactions in equation \eqref{eq:associated CRN} cancel out, so Theorem \ref{thm:fast-switching-ergodicity} applies exactly as in the previous example.
\end{proof}
\end{prop}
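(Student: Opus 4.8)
The plan is to reduce the claim to the linear-algebra computation already carried out in Proposition~\ref{prop:switching ergodic monomolecular}, by showing that the two networks here induce \emph{identical} matrices $M_1$, $M_2$, and hence the same average matrix $M$, as in that earlier example. First I would isolate precisely what has changed: relative to Example~\ref{ex:switching ergodic monomolecular}, the network $\mathcal R_1$ here adds the two reactions $2S_1\to S_1$ and $2S_1\to 3S_1$ (each with rate constant $1$), while $\mathcal R_2$ adds $2S_2\to S_2$ and $2S_2\to 3S_2$ (again each with rate constant $1$).

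Next I would compute the contribution of these new reactions to the left-hand side of equation~\eqref{eq:associated CRN}. Under mass-action kinetics both new reactions of $\mathcal R_1$ have rate $x_1(x_1-1)$; their stoichiometric vectors $y'-y$ are $(-1,0)$ and $(1,0)$, and both leave $S_2$ unchanged. Hence their combined contribution to $\sum_{y\to y'}\lambda_{1,y\to y'}(x)(y'_m-y_m)$ is $x_1(x_1-1)\bigl((-1)+(1)\bigr)=0$ for $m=1$ and is identically zero for $m=2$; the same cancellation, with $x_2(x_2-1)$ in place of $x_1(x_1-1)$, holds for the two new reactions of $\mathcal R_2$. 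The quadratic terms thus vanish exactly, so equation~\eqref{eq:associated CRN} continues to hold with the very same matrices
\[
M_1=\begin{bmatrix}-4&2\varepsilon\\ 2(1-\varepsilon)&0\end{bmatrix},\qquad M_2=\begin{bmatrix}0&2(1-\varepsilon)\\ 2\varepsilon&-4\end{bmatrix},
\]
and therefore with the same average $M=\tfrac12(M_1+M_2)=\begin{bmatrix}-2&1\\1&-2\end{bmatrix}$.

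With the matrices identified, I would reuse the vector $v=(1,1)$, which satisfies $vM=-v\in\RR^2_{<0}$, so the hypothesis of Theorem~\ref{thm:fast-switching-ergodicity} is met. The conclusion is that $Z_\kappa$ converges exponentially fast for all sufficiently large $\kappa$, and—since the state space is irreducible—this upgrades to exponential ergodicity exactly as at the end of Proposition~\ref{prop:switching ergodic monomolecular}.

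There is no serious obstacle here; the entire content is the observation that the added reactions come in rate-matched pairs of opposite stoichiometry, so their contributions cancel exactly and the linear-algebra input from the previous example is untouched. The only point that genuinely requires care is that one must invoke the \emph{ergodicity} theorem rather than a transience theorem: Theorem~\ref{thm:fast-switching-ergodicity} asks only that each $M_i$ be associated to a CRN via equation~\eqref{eq:associated CRN} and does not impose the at-most-monomolecular condition, whereas the transience theorems would forbid the genuinely bimolecular reactions $2S_1\to S_1$, $2S_1\to 3S_1$ and their $S_2$ counterparts. Since we are establishing stability, we are precisely in the regime where equation~\eqref{eq:associated CRN} is permitted to conceal cancelling higher-order reactions.
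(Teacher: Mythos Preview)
Your proposal is correct and follows exactly the same approach as the paper's proof: observe that the added bimolecular reactions come in rate-matched pairs with opposite stoichiometry, so their contributions to equation~\eqref{eq:associated CRN} cancel, leaving the matrices $M_1,M_2,M$ unchanged from Example~\ref{ex:switching ergodic monomolecular}, whence Theorem~\ref{thm:fast-switching-ergodicity} applies with $v=(1,1)$. Your write-up simply spells out the cancellation and the resulting matrices more explicitly than the paper does.
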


The next example illustrates Theorems \ref{thm:fast-switching-transience} and \ref{thm:slow-switching-ergodicity} by applying them in a case where the switched process is positive recurrent as long as the switching is slow enough but the mixed process is transient as long as the switching is fast enough.

\begin{example}\label{ex:basic switching transient}
Consider the process randomly switching, with rate $\kappa$ in each direction, between the following two mass-action CRNs
\begin{equation*}
\mathcal{R}_1\colon
\begin{tikzcd}
    S_1\arrow[yshift=3]{r}{1-\varepsilon}&0\arrow[yshift=-3]{l}{1}\\
    S_1\arrow{r}{\varepsilon}&4S_2\\
    S_2\arrow[yshift=3]{r}{\varepsilon}&0\arrow[yshift=-3]{l}{1}\\
    S_2\arrow{r}{1-\varepsilon}&4S_1
\end{tikzcd}
\qquad\qquad
\mathcal{R}_2\colon
\begin{tikzcd}
    S_1\arrow[yshift=3]{r}{\varepsilon}&0\arrow[yshift=-3]{l}{1}\\
    S_1\arrow{r}{1-\varepsilon}&4S_2\\
    S_2\arrow[yshift=3]{r}{1-\varepsilon}&0\arrow[yshift=-3]{l}{1}\\
    S_2\arrow{r}{\varepsilon}&4S_1
\end{tikzcd}
\end{equation*}
for some parameter $\varepsilon\in(0,1)$.
\end{example}

\begin{prop}
In Example \ref{ex:basic switching transient}, $\mathcal R_1$ and $\mathcal R_2$ are exponentially ergodic as long as $\varepsilon$ is small enough. For all small enough $\varepsilon$ and small enough $\kappa$ (possibly depending on $\varepsilon$) the mixed process is exponentially ergodic, but for any $\varepsilon$ the mixed process is evanescent for large enough $\kappa$ (again, possibly depending on $\varepsilon$).
\begin{proof}
The respective matrices in individual environments as well as the average environment are given by 
\[
    M_1=\begin{bmatrix}-1&4\varepsilon\\ 4(1-\varepsilon)&-1\end{bmatrix},\quad
    M_2=\begin{bmatrix}-1&4(1-\varepsilon)\\ 4\varepsilon&-1\end{bmatrix},\quad
    M=\begin{bmatrix}-1&2\\ 2&-1\end{bmatrix}
\]
Note that any $2\times 2$ matrix with negative trace and positive determinant must have two eigenvalues with negative real part. In this case $M_1$ and $M_2$ have trace $-2$ and determinant $1-16\varepsilon+16\varepsilon^2$. This determinant is continuous in $\varepsilon$ and is positive at zero, so it is positive for sufficiently small $\varepsilon$. Thus both eigenvalues of $M_1$, and both of $M_2$, have negative real part for $\varepsilon$ small. It follows from Proposition \ref{prop:dec/inc direction}(i) that for $\varepsilon$ small there exist vectors $v^1,v^2\in\RR^2_{>0}$ such that $v^iM_i\in\RR^2_{<0}$.

For such $\varepsilon$, then, applying Corollary \ref{cor:one-environment-ergodicity} tells us that $\mathcal R_1$ and $\mathcal R_2$ converge exponentially fast, and hence by irreducibility are exponentially ergodic. Similarly, applying Theorem \ref{thm:slow-switching-ergodicity} tells us that the process that switches between $\mathcal R_1$ and $\mathcal R_2$ is exponentially ergodic for any $\varepsilon$ which is small enough in the sense of the previous paragraph, provided $\kappa$ is small enough.

However, the matrix $M$ has an eigenvalue of $1$ with corresponding eigenvector $v=(1,1)$ so Theorem \ref{thm:fast-switching-transience} tells us that any state $x\in\ZZ^2_{\ge0}$ with $\supp(v)\cap\supp(x)\ne\emptyset$ is evanescent for the switching process. But $v$ is strictly positive so that includes every state except for $x=(0,0)$; the existence of at least one inflow reaction tells us that $(0,0)$ is a evanescent state as well. Thus every state is evanescent for the switching process, as claimed.
\end{proof}
\end{prop}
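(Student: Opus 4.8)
The plan is to obtain all three assertions as direct applications of the four main theorems, after first reading the relevant matrices off the reaction diagrams. Since the environment switches symmetrically, the stationary distribution is $w=(1/2,1/2)$, so one computes
\[
M_1=\begin{bmatrix}-1&4\varepsilon\\4(1-\varepsilon)&-1\end{bmatrix},\quad M_2=\begin{bmatrix}-1&4(1-\varepsilon)\\4\varepsilon&-1\end{bmatrix},\quad M=\tfrac12(M_1+M_2)=\begin{bmatrix}-1&2\\2&-1\end{bmatrix}.
\]
Crucially, $M$ does not depend on $\varepsilon$, which is exactly what will let the fast-switching conclusion hold for every $\varepsilon\in(0,1)$.

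For the individual environments I would argue through trace and determinant. Both $M_1$ and $M_2$ have trace $-2$ and determinant $1-16\varepsilon+16\varepsilon^2$; the latter is continuous in $\varepsilon$ and equals $1$ at $\varepsilon=0$, hence is positive for all sufficiently small $\varepsilon$. A real $2\times2$ matrix with negative trace and positive determinant has both eigenvalues in the open left half-plane, so each $M_i$ is Hurwitz stable for small $\varepsilon$. As the off-diagonal entries of $M_i$ are positive, Proposition \ref{prop:dec/inc direction}(i) then supplies vectors $v^1,v^2\in\RR^2_{>0}$ with $v^iM_i\in\RR^2_{<0}$. Corollary \ref{cor:one-environment-ergodicity} shows each $\mathcal R_i$ converges exponentially fast, and since each single-environment chain is irreducible on $\ZZ^2_{\ge0}$ (owing to the inflow reactions) this coincides with exponential ergodicity. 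The slow-switching claim is then immediate: the same $v^1,v^2$ verify the hypotheses of Theorem \ref{thm:slow-switching-ergodicity}, which delivers exponential ergodicity of the mixed process once $\kappa$ is small.

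For the fast-switching claim I would work with $M$, whose eigenvalues are $1$ (eigenvector $(1,1)$) and $-3$; in particular $M$ is Hurwitz-unstable. Taking $v=(1,1)$, one checks directly that $vM=(1,1)$, so $(vM)_m>0$ for every $m\in\supp(v)$, and Theorem \ref{thm:fast-switching-transience} then declares every state $(x,i)$ with $\supp(v)\cap\supp(x)\ne\emptyset$ to be evanescent for all large $\kappa$. Since $v$ is strictly positive, this covers every state except those with $x=(0,0)$; to finish I would note that from $(0,0)$ an inflow reaction reaches a nonzero state, which is evanescent by the above, and because evanescence is a class property the state $(0,0)$ is evanescent as well.

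The main obstacle is less any single estimate than the bookkeeping of translating spectral data into the sign-definite witnesses the theorems demand: the passage from ``negative trace, positive determinant'' to the existence of $v^i\in\RR^2_{>0}$ with $v^iM_i<0$, and from ``$M$ has a positive eigenvalue'' to a nonnegative $v$ with $(vM)_m>0$ on its support, is precisely where the Metzler-matrix results of Proposition \ref{prop:dec/inc direction} (and, should one prefer not to exhibit $v$ by hand, Proposition \ref{prop:Hurwitz-unstable}) are indispensable. The only remaining delicacy is the corner state $(0,0)$, left uncovered because $v$ is strictly positive, which must be handled separately through the inflow reactions.
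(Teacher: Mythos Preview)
Your proposal is correct and matches the paper's proof essentially line for line: the same matrices, the same trace/determinant argument for Hurwitz stability of $M_1,M_2$, the same invocation of Proposition~\ref{prop:dec/inc direction}(i), Corollary~\ref{cor:one-environment-ergodicity}, and Theorem~\ref{thm:slow-switching-ergodicity} for the ergodicity claims, and the same use of the eigenvector $v=(1,1)$ with Theorem~\ref{thm:fast-switching-transience} plus the inflow-reaction argument for $(0,0)$.
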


The next example illustrates one non-obvious way of applying Theorems \ref{thm:fast-switching-ergodicity} and \ref{thm:fast-switching-transience}.

\begin{example}\label{ex: many environments}
    Consider the following mass-action CRN:
    \begin{center}
    \begin{tikzcd}
        X\arrow[yshift=3]{r}{\kappa}&X'\arrow[yshift=-3]{l}{\kappa}&X+Y\arrow{r}{\alpha}&X+2Y&Y\arrow[yshift=3]{r}{\beta}&0\arrow[yshift=-3]{l}{\gamma}
    \end{tikzcd}
    \end{center}
\end{example}

\begin{prop}
Suppose the CRN from Example \ref{ex: many environments} is started in a state with $n$ total molecules of $X$ and $X'$ combined. If $n\alpha>2\beta$ and $\kappa$ is sufficiently large, then the CRN is evanescent. If $n\alpha<2\beta$ and $\kappa$ is sufficiently large, it is exponentially ergodic.
\begin{proof}
    The existence of the reaction with source complex $X+Y$ means that Theorems \ref{thm:fast-switching-ergodicity} and \ref{thm:fast-switching-transience} cannot be applied to this CRN if we view it as $3$-species network in a single environment. However, notice that the sum of the number of $X$ and $X'$ molecules is a conserved quantity. If the process is started in a state with $n$ total molecules of $X$ and $X'$ for $n\in\ZZ_{\ge0}$, then we can change perspectives and view this stochastic process as CRN with one species in $n+1$ environments, where the network has $i-1$ molecules of $X$ when in environment $i\in[n+1]=\{1,\cdots,n+1\}$. With this perspective, the environment transitions with rate matrix $\kappa Q$ for
    \begin{align*}
        Q&=
        \begin{pmatrix}
            -n & n & 0 & 0 & \cdots & 0 & 0\\
            1 & -n & n-1 & 0 & \cdots & 0 & 0\\
            0 & 2 & -n & n-2 & \cdots & 0 & 0\\
            0 & 0 & 3 &  -n  & \ddots & 0 & 0\\
            \vdots &\vdots &\vdots &\ddots &\ddots & \ddots &\vdots\\
            0 & 0 & 0 & 0 & \ddots & -n & 1\\
            0 & 0 & 0 & 0 & \cdots & n & -n\\
        \end{pmatrix},
    \end{align*}
    and for $i\in[n+1]$ we have the change of $Y$ in environment $i$ is described by the $1\times 1$ matrix
    \begin{align*}
        M_i&=((i-1)\alpha-\beta).
    \end{align*}
    Let $w_i=2^{-n}\binom{n}{i-1}$ and consider the vector $w=(w_1,\cdots,w_{n+1})$. Since $X\leftrightarrows X'$ is complex balanced, it follows from \cite[Theorem 4.1]{Anderson_Craciun_Kurtz_2010} that $w$ is the stationary distribution for $Q$; alternatively, one can check directly that
    \begin{align*}
        (wQ)_i&=w_{i-1}(n+2-i)-w_in+w_{i+1}i\\
        &=2^{-n}\left[(n+2-i)\binom n{i-2}+i\binom ni - n \binom n{i-1}\right]\\
        &=2^{-n}n\left[\binom{n-1}{i-2}+\binom{n-1}{i-1} - \binom n{i-1}\right]\\
        &=0.
    \end{align*}
    In any case, the mixed matrix $M$ is the $1\times 1$ matrix whose lone element is
    \begin{align*}
        \sum_{i=1}^{n+1} 2^{-n}\binom{n}{i-1} ((i-1)\alpha-\beta)
        &=\left(\sum_{i=0}^{n} 2^{-n}\binom{n}{i}i\right)\alpha-\beta\\
        &=\frac n2 \alpha - \beta.
    \end{align*}
    It follows from Theorem \ref{thm:fast-switching-transience} that if $n\alpha>2\beta$ then the network is evanescent for sufficiently large $\kappa$, and it follows from Theorem \ref{thm:fast-switching-ergodicity} that if $n\alpha<2\beta$ then it is exponentially ergodic for sufficiently large $\kappa$.
\end{proof}
\end{prop}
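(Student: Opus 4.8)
The plan is to exploit the conservation law hidden in the network. Since the only reactions touching $X$ or $X'$ are the interconversions $X\rightleftarrows X'$, the total count $\#X+\#X'$ is invariant; thus a process started with $n$ molecules of $X$ and $X'$ combined stays forever on the slice where this sum equals $n$. The obstruction to applying Theorems \ref{thm:fast-switching-transience} and \ref{thm:fast-switching-ergodicity} directly is the reaction $X+Y\to X+2Y$, which is bimolecular and therefore not at-most-monomolecular. First I would dissolve this obstruction by a change of viewpoint: on the invariant slice I track only the $Y$-count as a genuine species, and treat the number of $X$ molecules (ranging over $\{0,1,\dots,n\}$) as the label of a stochastic environment, giving $n+1$ environments indexed by $i\in[n+1]$, with $i-1$ copies of $X$ present in environment $i$. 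In this reinterpretation every reaction affecting $Y$ is at-most-monomolecular, so the transience and ergodicity theorems become applicable.

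Next I would record the two data that the \nameref{setting} Setting requires. The environment dynamics come from the mass-action rates of $X\to X'$ and $X'\to X$: in environment $i$ these fire at rates proportional to $i-1$ and to $n-i+1$ respectively, producing exactly the tridiagonal generator $\kappa Q$ displayed in the statement, with diagonal entries $-n$. The $Y$-dynamics in environment $i$ are governed by the $1\times 1$ matrix $M_i=\bigl((i-1)\alpha-\beta\bigr)$, since $X+Y\to X+2Y$ contributes $(i-1)\alpha$ to the linear growth rate of $Y$ while $Y\to 0$ contributes $-\beta$, and the inflow $0\to Y$ enters only through the $o(\norm{x}_{\ell^1})$ term of equation \eqref{eq:associated CRN}.

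The computational heart is identifying the stationary distribution $w$ of $Q$ and the averaged scalar $M=\sum_i w_iM_i$. I would verify that $w_i=2^{-n}\binom{n}{i-1}$ is stationary, either by invoking that the complex-balanced network $X\rightleftarrows X'$ has a product-form (here symmetric binomial) stationary distribution on each conservation class, or, more elementarily, by checking $wQ=0$ directly through Pascal's rule. With $w$ in hand, $M$ is the $1\times 1$ matrix whose entry is $\alpha\sum_i w_i(i-1)-\beta=\tfrac n2\alpha-\beta$, using that the mean of a $\mathrm{Binomial}(n,\tfrac12)$ is $n/2$.

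Finally I would feed the sign of $M$ into the fast-switching theorems, taking the scalar $v=1>0$. The hypothesis $n\alpha>2\beta$ is exactly $vM>0$ with $\supp(v)=\{1\}$, so Theorem \ref{thm:fast-switching-transience} yields evanescence of every state with nonzero $Y$-coordinate for $\kappa$ large; since the chain on the slice is irreducible (inflow and degradation of $Y$, together with the freely-moving environment, make all states communicate), the one remaining state is evanescent as well. Symmetrically, $n\alpha<2\beta$ gives $vM<0$, so Theorem \ref{thm:fast-switching-ergodicity} gives convergence exponentially fast, which upgrades to exponential ergodicity by irreducibility. I expect the main obstacle to be conceptual rather than computational: recognizing that the conserved quantity converts an inadmissible bimolecular network into an admissible monomolecular network in a random environment is the crux, after which the established machinery applies almost mechanically.
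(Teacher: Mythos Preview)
Your proposal is correct and follows essentially the same approach as the paper: exploit the conservation of $\#X+\#X'$ to recast the process as a one-species CRN in $n+1$ environments, identify the binomial stationary distribution of the environment chain (via complex balance or direct verification), compute the averaged $1\times1$ matrix $M=\tfrac{n}{2}\alpha-\beta$, and apply the fast-switching theorems according to the sign of $M$. You are in fact slightly more careful than the paper in noting that irreducibility is needed both to extend evanescence to the state with $Y=0$ and to upgrade ``converges exponentially fast'' to ``exponentially ergodic.''
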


The previous examples apply Theorem \ref{thm:fast-switching-transience} in the case where the vector $v$ whose existence is assumed in the theorem statement is strictly positive. This next example is a toy example illustrating the theorem in the case where no such positive vector $v$ exists.

\begin{example}\label{ex: no strictly positive v}
    Consider the mass-action CRN consisting of $0\leftrightarrows X$ and $Y\rightarrow 2Y$, with all rate constants equal to one. This CRN is clearly transient for any state with at least one $Y$ molecule; we will show how that also follows from the theorem. In this example there is only one environment, and
    \begin{align*}
        M=M_1=\begin{pmatrix}-1&0\\0&1\end{pmatrix}.
    \end{align*}
    This matrix has eigenvalues $-1,1$. Therefore, it is Hurwitz unstable, and so by Proposition \ref{prop:Hurwitz-unstable} and Corollary \ref{cor:one-environment-transience} the CRN is transient.

    Specifically, one can check that if $v=(0,1)$ then $vM=v$ and hence by Corollary \ref{cor:one-environment-transience} the CRN is transient for any state with at least one $Y$ molecule.
    
    However, no strictly positive $v$ such that $vM$ is strictly positive can exist, since the first coordinate of $vM$ is the negation of the first coordinate of $v$.\hfill //
\end{example}

One notable difference between Theorem \ref{thm:fast-switching-ergodicity} and Theorem \ref{thm:fast-switching-transience} is that while both require that the systems are linear in the sense that they satisfy equation \eqref{eq:associated CRN}, the theorem for ergodicity allows for faster reactions whose contributions to the left-hand side of equation \eqref{eq:associated CRN} cancel out, whereas the theorem for transience does not. It is natural to wonder whether Theorem \ref{thm:fast-switching-transience} can be strengthened to allow for higher-order reactions. The next example shows that, at least in general, it cannot.

\begin{example}\label{ex:trans thms don't apply with superlinear reactions}
Consider the mass-action CRN $0\rightarrow S\rightarrow 2S\leftarrow 3S\rightarrow 4S$ with all rate constants one.
\end{example}

\begin{prop}
The CRN from Example \ref{ex:trans thms don't apply with superlinear reactions}, viewed as a CRN with one species in one environment, is associated to a matrix in the sense of equation \eqref{eq:associated CRN}. The corresponding mixed matrix is Hurwitz unstable, but nevertheless the CRN is positive recurrent.
\begin{proof}
    Notice that if $\mathcal R$ denotes the set of reactions, then
    \begin{align*}
    \sum_{y\to y'\in\mathcal R}\lambda_{y\to y'}(x)(y'_m-y_m)&=1+x-x(x-1)(x-2)+x(x-1)(x-2)\\
    &=x+1\\
    &=x+o(x).
    \end{align*}
    The corresponding matrix has a single, positive, element, namely $1$, and hence is Hurwitz unstable. So the only thing stopping us from applying Theorem \ref{thm:fast-switching-transience} or Theorem \ref{thm:slow-switching-transience} (or, for that matter, Corollary \ref{cor:one-environment-transience}) is the existence of fast reactions $2S\leftarrow 3S\rightarrow 4S$.

    However, this CRN is positive recurrent. One can see this either directly by considering the Lyapunov function $f(x)=\log(x+1)$, or by applying the criteria from \cite{Xu_Hansen_Wiuf_2022}.
\end{proof}
\end{prop}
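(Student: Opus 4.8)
The plan is to dispatch the first two claims by direct computation and then concentrate all the effort on positive recurrence via an explicit Lyapunov function. For the association claim, I would simply sum $\lambda_{y\to y'}(x)(y'-y)$ over the four reactions: the inflow $0\to S$ contributes $1$, the reaction $S\to 2S$ contributes $x$, and the two cubic-rate reactions $3S\to 2S$ and $3S\to 4S$ carry equal rates $x(x-1)(x-2)$ but have opposite net effects $-1$ and $+1$, so they cancel. This leaves a total drift of $1+x=x+o(x)$, exhibiting the network as associated to the $1\times 1$ matrix $M=(1)$ in the sense of \eqref{eq:associated CRN}. Since the unique eigenvalue of $M$ is $1>0$, $M$ is Hurwitz unstable by inspection (or by Proposition \ref{prop:Hurwitz-unstable}).

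For positive recurrence, I would apply a Foster--Lyapunov drift criterion to the norm-like function $f(x)=\log(x+1)$, which is non-negative, tends to $\infty$, and has finite sublevel sets. The decisive observation is that the cancellation responsible for the vanishing of the higher-order terms in the linear drift does \emph{not} survive once the generator is applied to a strictly concave function. Concretely, the generator evaluates to
\[
\mathcal L f(x)=(1+x)\log\frac{x+2}{x+1}+x(x-1)(x-2)\left(\log\frac{x}{x+1}+\log\frac{x+2}{x+1}\right),
\]
where the first term comes from the two count-increasing reactions and the second from the two cubic-rate reactions. The inner sum of logarithms telescopes to $\log\!\bigl(1-(x+1)^{-2}\bigr)$, so it is the cubic-rate term that governs the asymptotics.

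The crux is the asymptotic analysis of that term. Since $\log\bigl(1-(x+1)^{-2}\bigr)\sim-(x+1)^{-2}$ as $x\to\infty$, the cubic-rate contribution behaves like $-x(x-1)(x-2)/(x+1)^2\sim -x$, while the first term $(1+x)\log\frac{x+2}{x+1}\to 1$. Hence $\mathcal L f(x)\to-\infty$; in particular $\mathcal L f(x)\le -1$ for all $x$ outside a finite set, which yields positive recurrence (alternatively one may invoke \cite{Xu_Hansen_Wiuf_2022}). The main obstacle is conceptual rather than computational: one must recognize that the higher-order reactions, though invisible in the first-order drift precisely because their contributions cancel, nevertheless impose a strong mean-reverting force of order $-x$ that is revealed only through the second-order, concavity term of $f$. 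Pinning down the constant is a routine Taylor expansion, but the heart of the matter is seeing that a cubic rate multiplied by an $O(x^{-2})$ concavity correction produces a linear-in-$x$ stabilizing drift that overwhelms the $O(1)$ destabilizing contribution of the low-order reactions.
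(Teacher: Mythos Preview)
Your proposal is correct and follows essentially the same approach as the paper: the drift computation is identical, and for positive recurrence the paper also points to the Lyapunov function $f(x)=\log(x+1)$ (or alternatively the criteria of \cite{Xu_Hansen_Wiuf_2022}), though it leaves the generator calculation implicit whereas you carry it out in full. Your asymptotic analysis of $\mathcal Lf$ is accurate and makes explicit what the paper merely gestures at.
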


The next example serves to illustrate the importance of one of the hypotheses of Theorems \ref{thm:slow-switching-transience}. Specifically, the theorem assumes the existence of certain vectors $v^i$ which have the same support, and the next example shows that this condition on the supports cannot be totally dropped from the theorem. In particular, it is an example where two transient CRNs are combined into a mixed process which is positive recurrent regardless of the switching rate, something which is perhaps interesting in its own right.

\begin{example}\label{ex:disjoint species}
    Let $\mathcal R_1$ and $\mathcal R_2$ be the mass-action CRNs described below:
    \begin{center}
    $\mathcal R_1$:
    \begin{tikzcd}
        2B\\
        B\arrow{u}{1}\\
        0\arrow{u}{1} & \arrow{l}{2}A
    \end{tikzcd}
    \qquad\qquad$\mathcal R_2$:
    \begin{tikzcd}
        B\arrow{d}{2}\\
        0\arrow{r}{1} & A \arrow{r}{1} &2A
    \end{tikzcd}
    \end{center}
    Consider the mixed process which transitions between environments $1$ and $2$ with rate $\kappa$ in each direction, and evolves according to $\mathcal R_i$ in environment $i$.
\end{example}

\begin{prop}
In Example \ref{ex:disjoint species}, there exists vectors $v^1,v^2$ such that all hypotheses of Theorem \ref{thm:slow-switching-transience} are satisfied, except that $\supp(v^1)\ne\supp(v^2)$. Nevertheless, the mixed process is positive recurrent for any choice of $\kappa$.
\begin{proof}
    If $M_i$ denotes the rate matrix for $\mathcal R_i$, then
    \begin{align*}
        M_1=\begin{pmatrix}-2&0\\0&1\end{pmatrix}
        \qquad
        M_2=\begin{pmatrix}1&0\\0&-2\end{pmatrix}
    \end{align*}
    If $v^1=(0,1)$ and $v^2=(1,0)$, then $v^iM_i=v^i$ for $i=1,2$. We see that all hypotheses of Theorem \ref{thm:slow-switching-transience} are satisfied, except that $\supp(v^1)\ne\supp(v^2)$.
    
    But fix $\kappa>0$, and define $h:\ZZ^2_{\ge0}\times\{1,2\}\to[0,\infty)$ via $h(a,b,1)=a+(1+2\kappa^{-1})b$ and $h(a,b,2)=(1+2\kappa^{-1})a+b$. Let $\mathcal L$ denote the generator of the mixed process; we see that
    \begin{align*}
        \mathcal Lh(a,b,1)&=\kappa2\kappa^{-1}(a-b)+(b+1)(1)-2a(1+2\kappa^{-1})\\
        &=-4\kappa^{-1}a-b+1.
    \end{align*}
    Similarly, $\mathcal Lh(a,b,2)=-a-4\kappa^{-1}b+1$. We conclude that $\mathcal Lh$ is negative outside a finite set, and so the model is positive recurrent, as claimed.
\end{proof}
\end{prop}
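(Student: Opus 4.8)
The plan is to extract the linear-algebra data, exploit the fact that the two matrices are diagonal, and then prove positive recurrence by a Foster--Lyapunov drift argument whose drift function is a \emph{fractional} power of the state.

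First I would read off the rate matrices, obtaining $M_1=\operatorname{diag}(-2,1)$ and $M_2=\operatorname{diag}(1,-2)$. Taking $v^1=(0,1)$ and $v^2=(1,0)$ gives $v^iM_i=v^i$, so $(v^iM_i)_m>0$ for every $m\in\supp(v^i)$ and each individual hypothesis of Theorem~\ref{thm:slow-switching-transience} holds, the only failure being $\supp(v^1)=\{2\}\ne\{1\}=\supp(v^2)$. The structural point I would stress is that $M_1,M_2$ are diagonal: species $A$ is pure death (rate $2a$) in environment $1$ and birth-with-immigration (rate $a+1$) in environment $2$, and $B$ does the reverse. Thus, given the environment path, the $a$- and $b$-marginals evolve as independent monomolecular branching-immigration processes, and each has time-averaged exponential rate $\tfrac12(1)+\tfrac12(-2)=-\tfrac12<0$ \emph{for every} $\kappa$; this is why stability should persist at all switching speeds.

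To realise this, I would fix $\kappa>0$, pick an exponent $p\in(0,\kappa/2)$, and use
\[
 f(a,b,i)=c^A_i\,(1+a)^p+c^B_i\,(1+b)^p,\qquad c^A_i,c^B_i>0.
\]
Because the coordinates decouple, $\mathcal L_\kappa f$ splits as the sum of the two marginal generators. Expanding $(1+a\pm 1)^p-(1+a)^p=\pm p(1+a)^{p-1}+O((1+a)^{p-2})$ and inserting the rates, the part coming from $A$ has leading term
\[
 \bigl[\kappa(c^A_2-c^A_1)-2p\,c^A_1\bigr](1+a)^p \ \ (i=1),\qquad
 \bigl[\kappa(c^A_1-c^A_2)+p\,c^A_2\bigr](1+a)^p \ \ (i=2),
\]
and symmetrically for $B$ with the two environments interchanged. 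These two bracketed coefficients can be made simultaneously negative exactly when $p<\kappa/2$: it is the same elementary feasibility check as for a linear drift but with the rates $-2,1$ replaced by $-2p,p$, so the linear threshold $\kappa>2$ becomes $\kappa>2p$, which I secure by taking $p$ small. With such constants each marginal generator is $\le-\delta(1+a)^p$ (resp.\ $\le-\delta(1+b)^p$) outside a finite set and bounded above everywhere; hence if either $a$ or $b$ is large the corresponding bracket sends $\mathcal L_\kappa f$ to $-\infty$ and swamps the bounded contribution of the other coordinate, so $\mathcal L_\kappa f<0$ off a finite set and the standard Foster--Lyapunov criterion yields positive recurrence.

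The crux --- and the reason I depart from a linear drift --- is the choice of exponent. A linear function ($p=1$) cannot succeed when $\kappa\le2$: the first-moment matrix $\left(\begin{smallmatrix}-2-\kappa&\kappa\\\kappa&1-\kappa\end{smallmatrix}\right)$ has determinant $\kappa-2$, so $\E[a(t)]\to\infty$ for $\kappa<2$ even though the chain is positive recurrent (its stationary law is heavy-tailed, with tail exponent $\kappa/2$). Only a sub-linear moment $p<\kappa/2$ sees the almost-sure contraction. A second, easily overlooked subtlety is that the per-species drift must go to $-\infty$ rather than to a negative constant: a positive recurrent chain admits no nonnegative function whose generator is nonpositive everywhere and bounded away from $0$ off a finite set, so a bounded (logarithmic or constant) environment-dependent correction can never cancel the unavoidable positive drift produced at the boundaries $\{a=0\}$, $\{b=0\}$ by immigration and by the switching term; the unbounded power drift is exactly what lets the ``good'' coordinate dominate the ``bad'' one when the two marginals are added. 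Checking the feasibility inequalities and the boundedness-above on the finite exceptional set is then routine.
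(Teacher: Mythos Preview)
Your argument is correct and takes a genuinely different route from the paper.  The paper attempts a \emph{linear} drift function, setting $h(a,b,1)=a+(1+2\kappa^{-1})b$ and $h(a,b,2)=(1+2\kappa^{-1})a+b$, and claims $\mathcal Lh(a,b,1)=-4\kappa^{-1}a-b+1$.  But that computation has the weights swapped: with the stated $h$, the coefficient of $a$ in environment~$1$ is $1$ and that of $b$ is $1+2\kappa^{-1}$, so the correct value is
\[
\mathcal Lh(a,b,1)=2(a-b)-2a+(b+1)(1+2\kappa^{-1})=(2\kappa^{-1}-1)\,b+1+2\kappa^{-1},
\]
which diverges to $+\infty$ in $b$ once $\kappa<2$.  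Your first-moment observation nails why: the $A$-marginal mean evolves under $\bigl(\begin{smallmatrix}-2-\kappa&\kappa\\ \kappa&1-\kappa\end{smallmatrix}\bigr)$ with determinant $\kappa-2$, so for $\kappa<2$ the expectation of each coordinate blows up and \emph{no} linear Lyapunov function can exist.

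Your cure---replace $a$ by $(1+a)^p$ with $p<\kappa/2$---is exactly right.  The feasibility check reduces to $(\kappa+2p)(\kappa-p)>\kappa^2$, i.e.\ $p<\kappa/2$, and because the species decouple the generator applied to $f$ splits cleanly; each piece has leading term $-\delta_i(1+\cdot)^p$ with $\delta_i>0$, while the Taylor remainder, the immigration term, and the $a/(1+a)$ discrepancy are all $O((1+\cdot)^{p-1})$.  Hence $\mathcal L_\kappa f\to-\infty$ in every direction and Foster--Lyapunov gives positive recurrence for every $\kappa>0$.  Your additional remark about the stationary tail exponent $\kappa/2$ is a nice explanation of \emph{why} only sub-linear moments see the contraction.

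In short: the paper's linear approach is the simpler one when it works, but as written it only covers $\kappa>2$; your fractional-power construction covers all $\kappa$ and is the correct fix.
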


In the introduction, we summarized the heuristic of our paper as saying that when the environment switches quickly, the mixed process behaves like the average, whereas when the environment switches slowly the mixed process behaves like the processes in the individual environments. While we believe the heuristic is a good summary of the results of our four main theorems, Theorems \ref{thm:fast-switching-ergodicity}, \ref{thm:slow-switching-ergodicity}, \ref{thm:fast-switching-transience}, and \ref{thm:slow-switching-transience}, care should be taken about going too far. For instance, one might be tempted to conjecture that when some $M_i$ are unstable and others are stable, then the mixed process should be unstable for small enough $\kappa$, on the basis that the (Cartesian) product of a stable Markov chain with an unstable Markov chain is unstable. And Example \ref{ex:disjoint species} provides some support for this conjecture (think about what happens if the reaction $B\to 2B$ is added to $\mathcal R_1$ with rate constant larger than one).

However, the conjecture discussed in the previous paragraph is false. Indeed, the following example consists of two one-species at-most-molecular mass-action CRNs, one with a stable matrix and the other with an unstable matrix, such that the mixed process is stable for every choice of $\kappa>0$.

\begin{example}\label{ex:different-stability-in-each-environment}
    Let $\mathcal R_1$ and $\mathcal R_2$ be the mass-action CRNs described below:
    \begin{center}
    $\mathcal R_1$:
    \begin{tikzcd}
        0\arrow[yshift = 3]{r}{1} & \arrow[yshift = -3]{l}{1} S \arrow{r}{2} &   2S
    \end{tikzcd}
    \qquad\qquad$\mathcal R_2$:
    \begin{tikzcd}
        0\arrow[yshift = 3]{r}{1} & \arrow[yshift = -3]{l}{3} S \arrow{r}{1} &   2S
    \end{tikzcd}
    \end{center}
    Consider the mixed process which transitions between environments $1$ and $2$ with rate $\kappa$ in each direction, and evolves according to $\mathcal R_i$ in environment $i$.
\end{example}

\begin{prop}
In Example \ref{ex:different-stability-in-each-environment}, the mixed process is exponentially ergodic for any choice of $\kappa>0$.
\begin{proof}
    Fix $\kappa>0$, and consider the function defined by
    \[
        h_\kappa(x,i)=\begin{cases}
            90 x^{\kappa/3} & i=1\\
            57 x^{\kappa/3} & i=2
        \end{cases}.
    \]
    Notice that
    \begin{align*}
        \mathcal L_\kappa h_\kappa(x,1)
        &=(2x+1)(90(x+1)^{\kappa/3}-90x^{\kappa/3})
        +x(90(x-1)^{\kappa/3}-90x^{\kappa/3})
        +\kappa(57x^{\kappa/3}-90x^{\kappa/3})\\
        &=(2x+1)(30\kappa x^{\kappa/3-1}+o(x^{\kappa/3-1}))
        -x(30\kappa x^{\kappa/3-1}+o(x^{\kappa/3-1}))
        -33\kappa x^{\kappa/3}\\
        &=-3\kappa x^{\kappa/3}+o(x^{\kappa/3}),
    \end{align*}
    and similarly
    \begin{align*}
        \mathcal L_\kappa h_\kappa(x,2)
        &=(x+1)(57(x+1)^{\kappa/3}-57x^{\kappa/3})
        +3x(57(x-1)^{\kappa/3}-57x^{\kappa/3})
        +\kappa(90x^{\kappa/3}-57x^{\kappa/3})\\
        &=(x+1)(19\kappa x^{\kappa/3-1}+o(x^{\kappa/3-1}))
        -3x(19\kappa x^{\kappa/3-1}+o(x^{\kappa/3-1}))
        +33\kappa x^{\kappa/3}\\
        &=-5\kappa x^{\kappa/3}+o(x^{\kappa/3}).
    \end{align*}
    The desired result follows from Theorem \ref{thm:Lyapunov-exponential-ergodicity}.
\end{proof}
\end{prop}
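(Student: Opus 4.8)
The first step is to read off the associated matrices from \eqref{eq:associated CRN}: in $\mathcal R_1$ the birth reaction $S\to2S$ (rate $2x$) beats the degradation $S\to0$ (rate $x$), giving net per-capita drift $+1$, so $M_1=(1)$; in $\mathcal R_2$ the degradation (rate $3x$) beats the birth (rate $x$), so $M_2=(-2)$; the inflow $0\to S$ contributes only at lower order. Thus environment $1$ is unstable and Theorem \ref{thm:slow-switching-ergodicity} does not apply, whereas the average matrix $M=\tfrac12M_1+\tfrac12M_2=(-\tfrac12)$ is stable, so Theorem \ref{thm:fast-switching-ergodicity} already delivers exponential ergodicity for large $\kappa$. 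The real content is therefore the small- and intermediate-$\kappa$ regime, where no theorem applies, and the plan is to exhibit a single Lyapunov function valid for every $\kappa>0$ and invoke Theorem \ref{thm:Lyapunov-exponential-ergodicity}.

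For the Lyapunov function I would take the power-law ansatz $h(x,i)=c_i\,x^{p}$ with positive constants $c_1,c_2,p$ to be chosen. A linear function is doomed here: since $M_1=(1)$, every positive scalar $v$ satisfies $vM_1>0$, so at the linear level $h$ has positive drift in environment $1$ that switching cannot repair. Using mass-action rates and the expansion $(x\pm1)^p=x^p\pm p\,x^{p-1}+o(x^{p-1})$, the reactions contribute $M_ic_ip\,x^p+o(x^p)$ in environment $i$ (the factor $M_i$ encoding the partial cancellation between the $+1$ and $-1$ transitions), while the environment switch contributes $\kappa(c_j-c_i)x^p$. Hence the leading coefficients of $\mathcal L_\kappa h(x,i)$ are
\begin{align*}
A_1=c_1p+\kappa(c_2-c_1),\qquad A_2=-2c_2p+\kappa(c_1-c_2),
\end{align*}
and it suffices to choose the constants so that $A_1<0$ and $A_2<0$.

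Solving these inequalities is the heart of the matter and explains why the example behaves as it does. The inequality $A_1<0$ reads $c_1p<\kappa(c_1-c_2)$, which forces $c_1>c_2$; the inequality $A_2<0$ reads $\kappa(c_1-c_2)<2c_2p$. Chaining them gives $c_1p<2c_2p$, i.e.\ $c_1<2c_2$, so the weights must satisfy $c_2<c_1<2c_2$, after which $p$ is free in the nonempty window $\tfrac{\kappa(c_1-c_2)}{2c_2}<p<\tfrac{\kappa(c_1-c_2)}{c_1}$. The crucial structural observation is that this window scales linearly with $\kappa$, so the exponent $p$ must be taken proportional to $\kappa$: in environment $1$ the upward reaction drift $c_1p\,x^p$ must be overpowered by the downward switch $\kappa(c_2-c_1)x^p$, while in environment $2$ the upward switch $\kappa(c_1-c_2)x^p$ must be overpowered by the decay $-2c_2p\,x^p$, and both can hold simultaneously only when $p\asymp\kappa$. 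Any concrete admissible choice finishes the proof; for instance $c_1=90$, $c_2=57$ (so $c_2<c_1<2c_2$) together with $p=\kappa/3$ gives $A_1=-3\kappa$ and $A_2=-5\kappa$, whence $\mathcal L_\kappa h<0$ outside a finite set and Theorem \ref{thm:Lyapunov-exponential-ergodicity} yields exponential ergodicity for that $\kappa$.

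The main obstacle is discovery rather than verification: settling on the power-law ansatz and then recognizing that $p$ must scale like $\kappa$, with the weight ratio confined to $c_1/c_2\in(1,2)$, is the only nonroutine step. Once these two scalings are identified, the generator expansion and the check that the drift is negative off a compact set are a direct computation.
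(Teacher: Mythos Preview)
Your proposal is correct and follows essentially the same approach as the paper: the power-law Lyapunov function $h(x,i)=c_i x^{p}$ with the very same constants $c_1=90$, $c_2=57$, $p=\kappa/3$, the same leading-order generator computation yielding $A_1=-3\kappa$ and $A_2=-5\kappa$, and the same appeal to Theorem~\ref{thm:Lyapunov-exponential-ergodicity}. Your write-up adds useful motivation the paper omits (why a linear Lyapunov fails, why $p$ must scale with $\kappa$, and how the admissible window $c_2<c_1<2c_2$ arises), but the underlying argument is identical.
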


\section{Intermediate Switching}\label{sec:Intermediate Switching}

The purpose of this section is to explore possible dynamics for intermediate values of $\kappa$. First, we will show that there are models in which the behavior at sufficiently small and sufficiently large values of $\kappa$ is the same, but at some intermediate $\kappa$ the behavior changes. Specifically, the model in Example \ref{ex:trans-large-and-small} will turn out to be transient for both small and large $\kappa$ but exponentially ergodic for intermediate $\kappa$, whereas the model in Example \ref{ex:ergodic-large-and-small} will be exponentially ergodic for both small and large $\kappa$ but transient for intermediate $\kappa$. Second, building on these examples, in Proposition \ref{prop:arbitrarily-many-phase-transitions} we will show that there exists models with an arbitrarily large number of phase transitions between exponentially ergodic and transient as the parameter $\kappa$ is varied, with our preferred behavior at extreme values of $\kappa$.

Recall that Examples \ref{ex:switching ergodic monomolecular} and \ref{ex:basic switching transient} show that there exists models which are transient for small $\kappa$ and exponentially ergodic for large $\kappa$, and vice versa. If these networks are combined into one big network with two non-interacting sets of species, one from the first example and one from the second, the combined network will be transient whenever either model is transient, and exponentially ergodic when both are exponentially ergodic. Thus to construct an example which is transient for $\kappa$ both small and large but exponentially ergodic for intermediate $\kappa$, we just have to combine these two previous examples in such a way that regimes where both are exponentially ergodic overlap. As it turns out, this is straightforward to do by simply adjusting the relative rates of the two models, which is the content of the following example.

\begin{example}\label{ex:trans-large-and-small}
Let us consider the process randomly switching, with rate $\kappa$ in each direction, between the following two mass-action SRNs
\begin{equation*}
\mathcal{R}_1\colon
\begin{tikzcd}
    2S_2&&\\
    S_2 \arrow[shift left=0.7ex]{u}{2} \arrow[shift right=0.7ex,swap]{d}{1+\varepsilon}  \arrow[shift left=0ex]{drr}{1-\varepsilon}  &&\\
    0 \arrow{r}{1} \arrow[shift right=0.7ex,swap]{u}{1} & S_1 \arrow[shift left=1ex]{l}{4-\varepsilon}   \arrow[shift left=0.7ex,swap]{uul}{\varepsilon} &2S_1\\
    S_3\arrow[yshift=3]{r}{(1-\varepsilon)\beta}&0\arrow[yshift=-3]{l}{\beta}&\\
    S_3\arrow{r}{\varepsilon\beta}&4S_4&\\
    S_4\arrow[yshift=3]{r}{\varepsilon\beta}&0\arrow[yshift=-3]{l}{\beta}&\\
    S_4\arrow{r}{(1-\varepsilon)\beta}&4S_3&
\end{tikzcd}
\qquad\qquad
\mathcal{R}_2\colon
\begin{tikzcd}
    2S_2&&\\
    S_2  \arrow[shift right=0.7ex,swap]{d}{4-\varepsilon}  \arrow[shift left=0ex]{drr}{\varepsilon}  &&\\
    0 \arrow{r}{1} \arrow[shift right=0.7ex,swap]{u}{1} & S_1 \arrow[swap]{r}{2} \arrow[shift left=1ex]{l}{1+\varepsilon}   \arrow[swap,shift left=0.7ex]{uul}{1-\varepsilon} &2S_1\\
    S_3\arrow[yshift=3]{r}{\varepsilon\beta}&0\arrow[yshift=-3]{l}{\beta}&\\
    S_3\arrow{r}{(1-\varepsilon)\beta}&4S_4&\\
    S_4\arrow[yshift=3]{r}{(1-\varepsilon)\beta}&0\arrow[yshift=-3]{l}{\beta}&\\
    S_4\arrow{r}{\varepsilon\beta}&4S_3&
\end{tikzcd}
\end{equation*}
for some parameters $\varepsilon\in(0,1)$ and $\beta>0$.
\end{example}

\begin{prop}\label{prop:trans-large-and-small}
    There exists a choice of $\varepsilon$ and $\beta$ such that every state is evanescent for the model in Example \ref{ex:trans-large-and-small} for both $\kappa$ sufficiently large and $\kappa$ sufficiently small, but such that there nevertheless exist intermediate values of $\kappa$ for which the model is exponentially ergodic.
\begin{proof}
Notice that this model consists of two two-species submodels which do not interact. By Proposition \ref{prop:switching ergodic monomolecular}, for any $\varepsilon\in(0,1)$ the first submodel (the one with species $S_1,S_2$) is evanescent when $\kappa$ is small enough and exponentially ergodic when $\kappa$ is large enough. Let $\kappa_{\min}$ be large enough (implicitly depending on $\varepsilon$) that the first submodel is exponentially ergodic whenever $\kappa>\kappa_{\min}$.

Similarly, the second submodel (the one with species $S_3,S_4$) is a version of the model from Example \ref{ex:basic switching transient}, but with the speeds scaled up ($\beta>1$) or down ($\beta<1$) by a factor of $\beta$. Fix $\varepsilon\in(0,1)$ going forward to be small enough that the model from Example \ref{ex:basic switching transient} is exponentially ergodic for small $\kappa$, and let $\kappa_{\max}$ be small enough that said model is exponentially ergodic whenever $\kappa<\kappa_{\max}$. It follows that the second submodel here is exponentially ergodic whenever $\kappa\beta^{-1}<\kappa_{\max}$ but is transient whenever $\kappa\beta^{-1}$ is sufficiently large (see Figure \ref{fig:rescaling} for more explanation).

\begin{figure}
    \centering
    $\mathcal R^1_1\colon$
    \begin{tikzcd}
        S_3\arrow[yshift=3]{r}{1-\varepsilon}&0\arrow[yshift=-3]{l}{1}&\\
        S_3\arrow{r}{\varepsilon}&4S_4&\\
        S_4\arrow[yshift=3]{r}{\varepsilon}&0\arrow[yshift=-3]{l}{1}&\\
        S_4\arrow{r}{1-\varepsilon}&4S_3&
    \end{tikzcd}
    \qquad\qquad
    $\mathcal R^1_2\colon$
    \begin{tikzcd}
        S_3\arrow[yshift=3]{r}{\varepsilon}&0\arrow[yshift=-3]{l}{1}&\\
        S_3\arrow{r}{1-\varepsilon}&4S_4&\\
        S_4\arrow[yshift=3]{r}{1-\varepsilon}&0\arrow[yshift=-3]{l}{1}&\\
        S_4\arrow{r}{\varepsilon}&4S_3&
    \end{tikzcd}
    \qquad\qquad
    $Q_1=\begin{pmatrix}-1&1\\1&-1\end{pmatrix}$
    
    \vspace{1cm}
    
    $\mathcal R^2_1\colon$
    \begin{tikzcd}
        S_3\arrow[yshift=3]{r}{(1-\varepsilon)\beta}&0\arrow[yshift=-3]{l}{\beta}&\\
        S_3\arrow{r}{\varepsilon\beta}&4S_4&\\
        S_4\arrow[yshift=3]{r}{\varepsilon\beta}&0\arrow[yshift=-3]{l}{\beta}&\\
        S_4\arrow{r}{(1-\varepsilon)\beta}&4S_3&
    \end{tikzcd}
    \qquad\qquad
    $\mathcal R^2_2\colon$
    \begin{tikzcd}
        S_3\arrow[yshift=3]{r}{\varepsilon\beta}&0\arrow[yshift=-3]{l}{\beta}&\\
        S_3\arrow{r}{(1-\varepsilon)\beta}&4S_4&\\
        S_4\arrow[yshift=3]{r}{(1-\varepsilon)\beta}&0\arrow[yshift=-3]{l}{\beta}&\\
        S_4\arrow{r}{\varepsilon\beta}&4S_3&
    \end{tikzcd}
    \qquad\qquad
    $Q_2=\begin{pmatrix}-\beta&\beta\\\beta&-\beta\end{pmatrix}$
    \caption{Two models, each with two environments and two species. The first model consists of the networks $\mathcal R^1_1$ and $\mathcal R^1_2$ and environment rate matrix $(\kappa\beta^{-1})Q_1$, and the second model consists of $\mathcal R^2_1$ and $\mathcal R^2_2$ and rate matrix $(\kappa\beta^{-1})Q_2$. The first model is exactly the one from Example \ref{ex:basic switching transient} but with the species reindexed and with $\kappa\beta^{-1}$ as a parameter in place of $\kappa$. Meanwhile, the second model is exactly the restriction to species $\{S_3,S_4\}$ of the model considered in Example \ref{ex:trans-large-and-small} (even the rate matrix $(\kappa\beta^{-1})Q_2$ is the same, since you can distribute the factor of $\beta^{-1}$ to cancel the $\beta$s inside $Q_2$).
    But notice that the second model is exactly the first model but will all transition rates multiplied by a factor of $\beta$. Therefore, the two models will have the same long-term stability behavior. In particular, the first model is exponentially ergodic (respectively, evanescent) for a given value of $(\kappa\beta^{-1})$ iff the second is.}
    \label{fig:rescaling}
\end{figure}
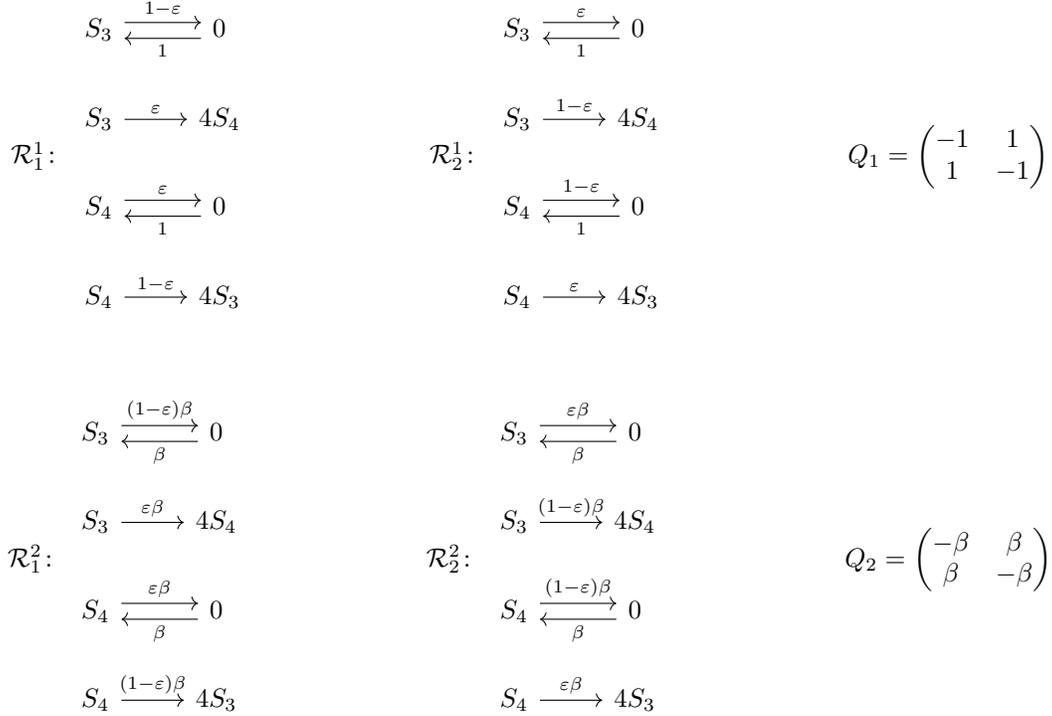

Now fix $\beta$ to be large enough that $\kappa_{\min}<\beta\kappa_{\max}$. Notice that when $\kappa$ is sufficiently small, the first submodel is evanescent and hence so is the entire model. Similarly, when $\kappa$ is sufficiently large the second submodel is evanescent and hence so is the entire model. But for any $\kappa\in(\kappa_{\min},\beta\kappa_{\max})$, both submodels are exponentially ergodic and hence the entire model is exponentially ergodic.
\end{proof}
\end{prop}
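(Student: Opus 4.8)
The plan is to exploit the fact that the network in Example \ref{ex:trans-large-and-small} decouples into two submodels living on disjoint species sets, one on $\{S_1,S_2\}$ and one on $\{S_3,S_4\}$, which interact only through the shared environment process $\kappa Q$. Since no reaction couples the two species groups, given the environment trajectory the two pairs of species counts evolve independently, and the combined generator splits additively as $\mathcal L_\kappa = \mathcal L^{(1)} + \mathcal L^{(2)}$ acting on the respective blocks (the environment terms distribute over each block's Lyapunov function). I would first record two elementary principles for such a decoupled system. First, evanescence propagates upward: if one submodel is evanescent then its $\ell^1$ norm tends to infinity with positive probability, and since $\norm{(x_1,x_2,x_3,x_4)}_{\ell^1}$ dominates the norm of each block, the full process is evanescent. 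Second, exponential ergodicity is inherited from both blocks simultaneously: adding the two block Lyapunov functions supplied by Theorem \ref{thm:Lyapunov-exponential-ergodicity} yields a function whose drift under $\mathcal L_\kappa$ is the sum of the two (negative) block drifts, so the combined process is exponentially ergodic whenever both pieces are.

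Next I would identify the two blocks with networks already understood. The $\{S_1,S_2\}$ block is exactly the network of Example \ref{ex:switching ergodic monomolecular}, so Proposition \ref{prop:switching ergodic monomolecular} provides, for each fixed $\varepsilon\in(0,1)$, evanescence for all sufficiently small $\kappa$ and exponential ergodicity once $\kappa$ exceeds some threshold $\kappa_{\min}$. The $\{S_3,S_4\}$ block is the network of Example \ref{ex:basic switching transient} with every reaction rate multiplied by the common factor $\beta$ while the environment switches at rate $\kappa$. The crucial observation here is a global time change: multiplying every transition rate of a CTMC by a constant only reparametrizes time and preserves recurrence, transience, and exponential ergodicity. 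Rescaling the $\{S_3,S_4\}$ block by $\beta^{-1}$ turns it into the Example \ref{ex:basic switching transient} model run with switching parameter $\kappa\beta^{-1}$ (this is the content of Figure \ref{fig:rescaling}). Hence its stability is governed entirely by $\kappa\beta^{-1}$: for $\varepsilon$ small it is exponentially ergodic whenever $\kappa\beta^{-1}<\kappa_{\max}$, i.e. $\kappa<\beta\kappa_{\max}$, and evanescent once $\kappa\beta^{-1}$ is large.

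Finally I would assemble the phase diagram and tune parameters. Fixing $\varepsilon$ small enough that both earlier propositions apply, the first block is ergodic on $(\kappa_{\min},\infty)$ and the second on $(0,\beta\kappa_{\max})$. Choosing $\beta$ large enough that $\kappa_{\min}<\beta\kappa_{\max}$ makes the window $(\kappa_{\min},\beta\kappa_{\max})$ nonempty; on it both blocks, and hence the whole process, are exponentially ergodic. For $\kappa$ small the first block is evanescent and for $\kappa$ large the second block is evanescent, in each case forcing evanescence of the whole process. This produces the desired triple behavior. The step I expect to need the most care is justifying the two decoupling principles together with the time-rescaling equivalence: evanescence propagation is immediate from the $\ell^1$-growth definition, but the ergodicity half relies on the additivity of the generator and the fact that a sum of block Lyapunov functions still certifies the Foster drift for the shared-environment process, while the time change must be applied to \emph{all} rates at once (reactions and switching together) so that the ratio $\kappa\beta^{-1}$ is what survives. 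Once these structural facts are in place, the remaining parameter bookkeeping is routine.
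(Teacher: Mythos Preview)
Your proposal is correct and follows essentially the same approach as the paper: decompose into the two non-interacting submodels, identify them with Examples \ref{ex:switching ergodic monomolecular} and \ref{ex:basic switching transient} (the latter time-rescaled by $\beta$), and choose $\beta$ so that the ergodic windows overlap. You are in fact more explicit than the paper about the decoupling principles (evanescence propagating upward, ergodicity via summed block Lyapunov functions) and about why the time change reduces the second block to a $\kappa\beta^{-1}$-parametrized copy of Example \ref{ex:basic switching transient}; the paper simply asserts these facts and relegates the rescaling to Figure \ref{fig:rescaling}.
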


Note a fundamental asymmetry in the proof of Proposition \ref{prop:trans-large-and-small}: The combined ($4$-species) model is transient when \emph{either} of the two ($2$-species) submodels is transient, but is exponentially ergodic only when \emph{both} of the submodels are exponentially ergodic. As a consequence, it will not be possible to use the same trick to produce a model which is exponentially ergodic for all sufficiently large $\kappa$ and also all sufficiently small $\kappa$, but is nevertheless transient for intermediate $\kappa$. We will create such a model via other methods, detailed below. In any case, once we do have such a model, the same trick used in Proposition \ref{prop:trans-large-and-small} will allow us to create models with an arbitrarily large number of phase transitions and our choice of behavior (exponentially ergodic or transient) for sufficiently large $\kappa$ and separately for sufficiently small $\kappa$ (see Proposition \ref{prop:arbitrarily-many-phase-transitions}).

To construct examples of models which are exponentially ergodic for both sufficiently large $\kappa$ and sufficiently small $\kappa$, but are transient for intermediate $\kappa$, we will make use of the following proposition. The proposition is intentionally stated only in the minimum generality necessary to verify Example \ref{ex:ergodic-large-and-small} while abstracting away unnecessary detail.

\begin{prop}\label{prop:ergodic-large-and-small}
Consider a \nameref{setting} setting model with $d$ species and $4$ environments. Assume mass-action kinetics and at-most-monomolecular reactions. Suppose that the switching between environments happens according to $\kappa Q$ for
\[
Q=(q_{ij})_{i,j=1}^4
:=\begin{pmatrix}
    -(1+2\varepsilon) & 1 & \varepsilon & \varepsilon\\
    1 & -1(1+2\varepsilon) & \varepsilon & \varepsilon\\
    \varepsilon & \varepsilon & -(1+2\varepsilon) & 1\\
    \varepsilon & \varepsilon & 1 & -(1+2\varepsilon)\\
\end{pmatrix}
\]
for some $\kappa,\varepsilon>0$. Suppose further that $A_1:=(M_1+M_2)/2$ and $A_2:=(M_3+M_4)/2$ and $v^1,v^2\in\RR^d_{>0}$ are such that $v^iA_i\in\RR^d_{>0}$ for $i=1,2$. Then there exists a choice of $\kappa$ and $\varepsilon$ such that every $(x,i)\in\ZZ^d_{\ge 0}\times[4]$ with $x\ne0$ is an evanescent state.
\end{prop}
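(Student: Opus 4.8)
The plan is to exploit the block structure of $Q$. Writing $Q=R+\varepsilon S$, where $R$ is the generator that couples $1\leftrightarrow 2$ and $3\leftrightarrow 4$ at rate one and leaves the two clusters $\{1,2\},\{3,4\}$ decoupled, the mixed chain switches \emph{within} a cluster at rate $\kappa$ and \emph{between} clusters at rate $\kappa\varepsilon$. I would work in the regime where $\kappa$ is large while $\kappa\varepsilon$ is small; for a fixed small $\varepsilon$ this is an intermediate band $\kappa\in(\kappa_0,\delta_0/\varepsilon)$, and it is precisely the separation of these two scales that makes the example interesting: the full average $M=\tfrac12(A_1+A_2)$ may be stable (so Theorem \ref{thm:fast-switching-transience} does not apply) and the individual $M_i$ need not be unstable (so Theorem \ref{thm:slow-switching-transience} does not apply). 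By symmetry $Q$ has uniform stationary distribution, and as $\kappa\to\infty$ the fast within-cluster mixing averages $M_1,M_2$ to $A_1$ and $M_3,M_4$ to $A_2$; the hypothesis $v^iA_i\in\RR^d_{>0}$ is exactly the instability condition for these cluster-averages. Because $v^1,v^2$ are strictly positive, all relevant supports equal $[d]$, so that $\widetilde{\mathcal R}_i=\mathcal R_i$ and the support bookkeeping of the general transience proofs is vacuous.

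For the Lyapunov function I would take $h(x,i)=1-(1+u^i\cdot x)^{-1}$ with $u^i=v^{c(i)}+\kappa^{-1}\zeta^i$, where $c(i)\in\{1,2\}$ denotes the cluster containing $i$ and the corrections $\zeta^i$ implement the within-cluster averaging. For the first cluster this is the $n=2$ instance of Lemma \ref{lem:z^m}, solved explicitly by $\zeta^1=0$, $\zeta^2=\tfrac12 v^1(M_2-M_1)$, so that $(\zeta^2-\zeta^1)+v^1M_1=v^1A_1$ and symmetrically for $i=2$; take $\zeta^3=0$, $\zeta^4=\tfrac12 v^2(M_4-M_3)$ for the second cluster. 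For $\kappa$ large, $u^i\in\RR^d_{>0}$. Running the computation from the proof of Theorem \ref{thm:slow-switching-transience} verbatim (through the auxiliary quantity denoted $*$ and the ratio factors $a^m_{i,j}$), the sign of $\mathcal L_\kappa h(x,i)$ for large $\norm{x}_{\ell^1}$ is controlled by the coefficient $\sum_{j\ne i}\kappa q_{ij}a^m_{i,j}(u^j_m-u^i_m)+(u^iM_i)_m$ of $x_m$. For $i=1$ this equals $\bigl[a^m_{1,2}(\zeta^2_m-\zeta^1_m)+(v^1M_1)_m\bigr]+\kappa\varepsilon(a^m_{1,3}+a^m_{1,4})(v^2_m-v^1_m)+\varepsilon\bigl(a^m_{1,3}(\zeta^3_m-\zeta^1_m)+a^m_{1,4}(\zeta^4_m-\zeta^1_m)\bigr)+\kappa^{-1}(\zeta^1M_1)_m$. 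As $\kappa\to\infty$ the within-cluster factor $a^m_{1,2}\to1$, so the bracket tends to $(v^1A_1)_m>0$, while the between-cluster factors $a^m_{1,3},a^m_{1,4}$ converge to finite limits built from the ratios $v^1_l/v^2_l$ and hence stay bounded; the remaining terms are $O(\kappa\varepsilon)+O(\varepsilon)+O(\kappa^{-1})$. I would therefore first fix $\kappa$ large (controlling the bracket and the $O(\kappa^{-1})$ term, uniformly in $\varepsilon$), and then fix $\varepsilon$ small (controlling the $O(\kappa\varepsilon)$ and $O(\varepsilon)$ terms), so that this coefficient is positive for every $m$ and $i$. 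Consequently $\mathcal L_\kappa h(x,i)>0$ whenever $v^{c(i)}\cdot x$ exceeds some finite threshold $\tilde b$.

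To conclude evanescence I would follow Part 2 of the proof of Theorem \ref{thm:slow-switching-transience}, with one essential modification to the reachability step. Since the individual $M_i$ need not be unstable, I cannot feed the whole four-environment system into Lemma \ref{lem:v cdot x^0>C}; instead I apply that lemma to the two-environment sub-process obtained by restricting to the cluster $c(i)$, whose switching generator is $\kappa\left(\begin{smallmatrix}-1&1\\1&-1\end{smallmatrix}\right)$, whose cluster-average is $A_{c(i)}$, and for which $v^{c(i)}A_{c(i)}\in\RR^d_{>0}$. For $\kappa$ large this sub-process meets the hypotheses of Theorem \ref{thm:fast-switching-transience}, so from any $(x,i)$ with $x\ne0$ (equivalently $\supp(v^{c(i)})\cap\supp(x)\ne\emptyset$, as $v^{c(i)}>0$) one reaches, using only within-cluster transitions and therefore also in the full chain, a state $x^0$ with $v^{c(i)}\cdot x^0$ arbitrarily large. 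Choosing $c>1$ with $c^{-1}v^{c(i)}_m\le u^i_m\le c\,v^{c(i)}_m$ for all $i,m$ (valid for large $\kappa$), I take $x^0$ with $v^{c(i)}\cdot x^0>c^2\max\{\tilde b,v^{c(i)}\cdot x\}$ and set $B=\{(z,j):v^{c(j)}\cdot z\le\max\{\tilde b,v^{c(i)}\cdot x\}\}$. Then $\mathcal L_\kappa h>0$ off $B$, while the comparability of $u^j$ with $v^{c(j)}$ gives $\sup_{(z,j)\in B}h(z,j)<h(x^0,i)$; Theorem \ref{thm:Lyapunov-transience} then yields a positive probability of never returning to $B$ from $(x^0,i)$. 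Since $v^{c(j)}>0$ makes $B$ finite and every state off $B$ transient, the process avoids all recurrent states on this event, and Proposition \ref{prop:evanescent} gives evanescence of $(x,i)$.

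The main obstacle I anticipate is the Lyapunov construction: the single function $h$ must simultaneously encode the slow-switching behaviour \emph{between} clusters, carried by the zeroth-order vectors $v^1,v^2$, and the fast averaging \emph{within} each cluster, carried by the $\kappa^{-1}$-order corrections $\zeta^i$, and one must then verify that over the window ``$\kappa$ large, $\kappa\varepsilon$ small'' the competing errors of orders $\kappa^{-1}$, $\varepsilon$, and $\kappa\varepsilon$ are all dominated by the strictly positive principal term $v^{c(i)}A_{c(i)}$. The secondary subtlety, easy to overlook, is that reachability must be localized to a single cluster rather than run on the full system, since only the cluster-averages, not the individual environments, are unstable.
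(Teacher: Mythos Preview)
Your proposal is correct and follows essentially the same route as the paper: the Lyapunov function $h(x,i)=1-(1+(v^{c(i)}+\kappa^{-1}\zeta^i)\cdot x)^{-1}$ combining the cluster vectors $v^{c(i)}$ with $\kappa^{-1}$-order within-cluster corrections, the two-step choice (first $\kappa$ large, then $\varepsilon$ small), and the reachability argument via Lemma~\ref{lem:v cdot x^0>C} applied to the two-environment sub-process are exactly what the paper does. The only cosmetic difference is that the paper invokes Lemma~\ref{lem:z^m} abstractly to obtain strictly positive corrections (so that $\varphi^i\ge v^{c(i)}$ componentwise and a one-sided comparison suffices in Part~2), whereas your explicit choice $\zeta^1=0,\ \zeta^2=\tfrac12 v^1(M_2-M_1)$ may have nonpositive entries and you compensate with the two-sided comparability and the $c^2$ factor; both variants are fine.
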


\begin{remark}
    As mentioned, Proposition \ref{prop:ergodic-large-and-small} is not stated in the maximum possible generality. The proposition involves two groups of two environments, with fast switching within groups and slow switching between groups. But there is no reason other than ease of notation to assume that there are only two groups and/or only two environments within each group, nor even that each group has the same number of environments. Similarly, only convenience leads us to assume that the process divides its time equally between and within the groups. Lastly, note that proposition requires $v^1,v^2\in\RR^d_{>0}$; it should be possible to allow for $v^1,v^2\in\RR^d_{\ge0}$ in the manner of Theorems \ref{thm:fast-switching-transience} and \ref{thm:slow-switching-transience} by following the proof of either theorem.
\end{remark}

\begin{proof}[Proof of Proposition \ref{prop:ergodic-large-and-small}]
We begin by applying Lemma \ref{lem:z^m}. However, rather than applying it to the particular four-environment system that is the subject of this theorem, we instead apply it to the two-environment system with CRNs associated to $M_1$ and $M_2$, and environment Q-matrix
\[
\begin{pmatrix}-1&1\\1&-1\end{pmatrix}.
\]
Taking $v:=v^1$ in the statement of Lemma \ref{lem:z^m}, for each $m\in[d]$ we get $z^m_1,z^m_2>0$ such that
\[
z^m_2-z^m_1+(v^1M_1)_m
=(v^1A_1)_m
=z^m_1-z^m_2+(v^1M_2)_m.
\]
Doing the same for matrices $M_3,M_4$ and vector $v^2$, for each $m\in[d]$ we get $z^m_3,z^m_4>0$ such that
\[
z^m_4-z^m_3+(v^2M_3)_m
=(v^2A_2)_m
=z^m_3-z^m_4+(v^2M_4)_m.
\]
Now define $h_\kappa:\ZZ_{\ge0}^d\times[4]\to\RR$ via
\[
h_\kappa(x,i)=
1-\frac1{1+(v^{\ceil{i/2}}+\kappa^{-1}u^i)\cdot x},
\]
where $\ceil{i/2}$ is the smallest integer greater than or equal to $i/2$ and $u^i\in\RR^d_{>0}$ is the vector defined by $u^i_m:=z^m_i$. To make our notation more concise, for $i\in[4]$ we let $\varphi^i\in\RR^d_{>0}$ denote the coefficient vector $\varphi^i:=v^{\ceil{i/2}}+\kappa^{-1}u^i$ from the denominator of $h_\kappa$.

The proof will be divided into two parts. In the first part, we will show that for some choice of $\kappa$ and $\varepsilon$, there exists a constant $b$ (depending implicitly on $\kappa$ and $\varepsilon$, but not on $x$ or $i$) such that $\mathcal L_\kappa h_\kappa(x,i)>0$ whenever $v^{\ceil{i/2}}\cdot x\ge b$. In the second part, we will show how the result in the first part implies evanescence for the claimed states.

\textbf{Part 1:} Fix $x$ and $i$, and let $\widehat\imath$ denote the index of the environment which is paired with environment $i$:
\[
\widehat\imath:=\begin{cases}
    2   &   i=1\\
    1   &   i=2\\
    4   &   i=3\\
    3   &   i=4
\end{cases}.
\]
Notice that
\begin{align*}
    \mathcal L_\kappa h_\kappa(x,i)
    &=\sum_{j\ne i}\kappa q_{ij}(h_\kappa(x,j)-h_\kappa(x,i))
    +\sum_{y\to y'\in\mathcal R_i}\lambda_{i,y\to y'}(x)(h_\kappa(x-y+y',i)-h_\kappa(x,i))\\
    &=\frac{1}{1+\varphi^i\cdot x}\Bigg(\frac{\kappa(\kappa^{-1}u^{\widehat\imath}-\kappa^{-1}u^i)\cdot x}{1+\varphi^{\widehat\imath}\cdot x}+\sum_{j\ne i,\widehat\imath}\frac{\kappa \varepsilon (\varphi^j-\varphi^i)\cdot x}{1+\varphi^j\cdot x}+\sum_{y\to y'\in\mathcal R_i}\frac{\lambda_{i,y\to y'}(x)\varphi^i\cdot(y'-y)}{1+\varphi^i\cdot(x-y+y')}\Bigg)
\end{align*}
Consider the quantity
\begin{align*}
    *&:=(1+\varphi^i\cdot x)\left(\frac{\kappa(\kappa^{-1}u^{\widehat\imath}-\kappa^{-1}u^i)\cdot x}{1+\varphi^{\widehat\imath}\cdot x}+\sum_{j\ne i,\widehat\imath}\frac{\kappa \varepsilon (\varphi^j-\varphi^i)\cdot x}{1+\varphi^j\cdot x}\right)+\sum_{y\to y'\in\mathcal R_i}\lambda_{i,y\to y'}(x)\varphi^i\cdot(y'-y)
\end{align*}
The difference between $*$ and $(1+\varphi^i\cdot x)^2 \mathcal L_\kappa h_\kappa(x,i)$ is
\begin{align*}
    *\,-\,\left(1+\varphi^i\cdot x\right)^2 \mathcal L_\kappa h_\kappa(x,i)
    &=\sum_{y\to y'\in\mathcal R_i}\lambda_{i,y\to y'}(x)\varphi^i\cdot(y'-y)\left(1-\frac{1+\varphi^i\cdot x}{1+\varphi^i\cdot(x-y+y')}\right)\\
    &=\sum_{y\to y'\in\mathcal R_i}\lambda_{i,y\to y'}(x)\varphi^i\cdot(y'-y) \left(\frac{\varphi^i\cdot(y'-y)}{1+\varphi^i\cdot(x-y+y')}\right).
\end{align*}
The term $\lambda_{i,y\to y'}(x)\varphi^i\cdot(y'-y)$ only depends on $x$ via $\lambda_{i,y\to y'}(x)$, and since the mass-action CRN associated to $M_i$ has at-most-monomolecular reactions we know that $\lambda_{i,y\to y'}(x)$ is at most linear in $x$. Meanwhile, the numerator of the fraction in the last line does not depend on $x$, and the denominator is (affine) linear in $x$ with the coefficient of each $x_m$ strictly positive. Therefore, each term in the (finite!) sum above is bounded as $x_m$ varies, and so the difference between $*$ and $(1+\varphi^i\cdot x)^2 \mathcal L_\kappa h_\kappa(x,i)$ is bounded. We conclude that if we are able to show that $*\to\infty$ as $\norm x_{\ell^1}\to\infty$ (or equivalently, as $v^{\ceil{i/2}}\cdot x\to\infty$), then it would follow that $(1+\varphi^i\cdot x)^2 \mathcal L_\kappa h_\kappa(x,i)\to\infty$ as well.

Now we turn to analyzing $*$. Notice that
\begin{align*}
    \min\left\{1,\min_{l\in [d]}\left\{\frac{\varphi^i_l}{\varphi^j_l}\right\}\right\}\left(1+\varphi^j\cdot x\right)
    &=\min\left\{1,\min_{l\in [d]}\left\{\frac{\varphi^i_l}{\varphi^j_l}\right\}\right\}\left(1+\sum\limits_{m=1}^d \varphi^j_mx_m\right)
    \le 1+\sum\limits_{m=1}^d \varphi^i_mx_m
    = 1+\varphi^i\cdot x\\
    \min\left\{1,\min_{l\in [d]}\left\{\frac{\varphi^i_l}{\varphi^j_l}\right\}\right\}
    &\le \frac{1+\varphi^i\cdot x}{1+\varphi^j\cdot x}
\end{align*}
and similarly
\begin{align*}
    \frac{1+\varphi^i\cdot x}{1+\varphi^j\cdot x}
    &\le \max\left\{1,\max_{l\in [d]}\left\{\frac{\varphi^i_l}{\varphi^j_l}\right\}\right\}
\end{align*}
Therefore, if we define
\begin{align*}
    a^{i,j}_m=\begin{cases}\displaystyle
        \min\left\{1,\min_{l\in [d]}\left\{\frac{\varphi^i_l}{\varphi^j_l}\right\}\right\}      &     \qquad \varphi^j_m-\varphi^i_m>0\\
        1   &     \qquad \varphi^j_m-\varphi^i_m=0\\\displaystyle
       \max\left\{1,\max_{l\in [d]}\left\{\frac{\varphi^i_l}{\varphi^j_l}\right\}\right\}      &     \qquad \varphi^j_m-\varphi^i_m<0
    \end{cases},
\end{align*}
for $j\in[4]\setminus\{i\}$, then (since $\varphi^{\widehat\imath}-\varphi^i=\kappa^{-1}(u^{\widehat\imath}-u^i)$)
\begin{align*}
    *&=(1+\varphi^i\cdot x)\left(\frac{\kappa(\kappa^{-1}u^{\widehat\imath}-\kappa^{-1}u^i)\cdot x}{1+\varphi^{\widehat\imath}\cdot x}+\sum_{j\ne i,\widehat\imath}\frac{\kappa \varepsilon (\varphi^j-\varphi^i)\cdot x}{1+\varphi^j\cdot x}\right)+\sum_{y\to y'\in\mathcal R_i}\lambda_{i,y\to y'}(x)\varphi^i\cdot(y'-y)\\
    &\ge \sum_{m=1}^d a^{i,\widehat\imath}_m(u^{\widehat\imath}_m-u^i_m)x_m+\sum_{j\ne i,\widehat\imath}\sum_{m=1}^d\kappa\varepsilon a^{i,j}_m(\varphi^j_m-\varphi^i_m)x_m+\sum_{y\to y'\in\mathcal R_i}\lambda_{i,y\to y'}(x)\varphi^i\cdot(y'-y)\\
    &=\sum_{m=1}^d\left(a^{i,\widehat\imath}_m(u^{\widehat\imath}_m-u^i_m)+\sum_{j\ne i,\widehat\imath}\kappa\varepsilon a^{i,j}_m(\varphi^j_m-\varphi^i_m)\right)x_m+\sum_{y\to y'\in\mathcal R_i}\lambda_{i,y\to y'}(x)\varphi^i\cdot(y'-y).
\end{align*}
Notice that
\begin{align*}
    \sum_{y\to y'\in\mathcal R_i}\lambda_{i,y\to y'}(x)\varphi^i\cdot(y'-y)
    &=\sum_{y\to y'\in\mathcal R_i}\lambda_{i,y\to y'}(x)\sum_{m=1}^d \varphi^i_m(y'_m-y_m)\\
    &=\sum_{m=1}^d \varphi^i_m(M_ix)_m+o(\norm{x}_{\ell^1})\\
    &=\varphi^i\cdot(M_ix)+o(\norm{x}_{\ell^1})\\
    &=(\varphi^i M_i)\cdot x+o(\norm{x}_{\ell^1})\\
    &=\sum_{m=1}^d(\varphi^i M_i)_mx_m+o(\norm{x}_{\ell^1}).
\end{align*}
Plugging this into the above, we get
\begin{align*}
    *
    &\ge\sum_{m=1}^d\left(a^{i,\widehat\imath}_m(u^{\widehat\imath}_m-u^i_m)+\sum_{j\ne i,\widehat\imath}\kappa\varepsilon a^{i,j}_m(\varphi^j_m-\varphi^i_m)\right)x_m+\sum_{y\to y'\in\mathcal R_i}\lambda_{i,y\to y'}(x)\varphi^i\cdot(y'-y)\\
    &=\sum_{m=1}^d\left(a^{i,\widehat\imath}_m(u^{\widehat\imath}_m-u^i_m)+(\varphi^i M_i)_m+\sum_{j\ne i,\widehat\imath}\kappa\varepsilon a^{i,j}_m(\varphi^j_m-\varphi^i_m)\right)x_m+o(\norm{x}_{\ell^1}).
\end{align*}
Written in this form, it is clear that to show $*\to\infty$ as $v^{\ceil{i/2}}\cdot x\to\infty$, it suffices to show that
\begin{align}\label{eq:leading coefficient intermediate switching}
    a^{i,\widehat\imath}_m(u^{\widehat\imath}_m-u^i_m)+(\varphi^i M_i)_m+\sum_{j\ne i,\widehat\imath}\kappa\varepsilon a^{i,j}_m(\varphi^j_m-\varphi^i_m)>0
\end{align}
for each $m\in [d]$. But notice that $a^{i,\widehat\imath}_m\to1$ as $\kappa\to\infty$ and $\varphi_m^i\to v^{\ceil{i/2}}_m$ as $\kappa\to\infty$, and the expression $a^{i,\widehat\imath}_m(u^{\widehat\imath}_m-u^i_m)+(\varphi^i M_i)_m$ does not otherwise depend on $\kappa$. So
\begin{align*}
    \lim_{\kappa\to\infty}a^{i,\widehat\imath}_m(u^{\widehat\imath}_m-u^i_m)+(\varphi^i M_i)_m
    &=(z_{\widehat\imath}^m-z_i^m)+(v^{\ceil{i/2}} M_i)_m
    =(v^{\ceil{i/2}}A_{\ceil{i/2}})_m,
\end{align*}
where the second equality is a consequence of how we defined $z^m$ at the beginning of the proof, using Lemma \ref{lem:z^m}. But both vectors $v^1A_1$ and $v^2A_2$ have all entries strictly positive by assumption, and so we conclude that $a^{i,\widehat\imath}_m(u^{\widehat\imath}_m-u^i_m)+(\varphi^i M_i)_m$ is positive for sufficiently large $\kappa$. Choose $\kappa$ large enough that
\[
a^{i,\widehat\imath}_m(u^{\widehat\imath}_m-u^i_m)+(\varphi^i M_i)_m>0
\]
for all $m\in[d]$, and notice that the expression $\kappa a^{i,j}_m(\varphi^j_m-\varphi^i_m)$ depends on a few things ($\kappa$, $j$, $m$, etc.) but crucially not on $\varepsilon$. So
\[
\lim_{\varepsilon\to0}\sum_{j\ne i,\widehat\imath}\kappa\varepsilon a^{i,j}_m(\varphi^j_m-\varphi^i_m)=0;
\]
pick $\varepsilon>0$ small enough that
\begin{align*}
    \sum_{j\ne i,\widehat\imath}\kappa\varepsilon a^{i,j}_m(\varphi^j_m-\varphi^i_m)>-\left(a^{i,\widehat\imath}_m(u^{\widehat\imath}_m-u^i_m)+(\varphi^i M_i)_m\right)
\end{align*}
for every $m\in[d]$. With this choice of $\kappa,\varepsilon>0$, inequality \eqref{eq:leading coefficient intermediate switching} is satisfied. It follows that we have $*\to\infty$ as $v^{\ceil{i/2}}\cdot x\to\infty$ and hence $(1+\varphi^i\cdot x)^2 \mathcal L_\kappa h_\kappa(x,i)\to\infty$ as well. Fix $b$ large enough that when $v^{\ceil{i/2}}\cdot x\ge b$, we have $(1+\varphi^i\cdot x)^2 \mathcal L_\kappa h_\kappa(x,i)>0$.  Then because the square is positive, for all such $x$ we have $\mathcal L_\kappa h_\kappa(x,i)>0$ as well. (In principal $b$ should depend on $i$ as well as on $\kappa$ and $\varepsilon$, but there are only four possibilities for $i$ so we're just picking $b$ large enough that it works for all $i$). In other words, $\mathcal L_\kappa h_\kappa(x,i)>0$ whenever $v^{\ceil{i/2}}\cdot x\ge b$, which is what we said we would prove in Part 1.

\textbf{Part 2:} Fix $(x,i)\in\ZZ^d_{\ge 0}\times[4]$ with $x\ne0$, and let $\widehat\imath$ be as in Part 1. Note that since $\varphi^j$ and $v^{\ceil{j/2}}$ are both strictly positive vectors for all $j$, there exists some constant $c\ge 1$ such that for every $(z,j)\in\ZZ^d_{\ge 0}\times[4]$ we have $\varphi^j\cdot z\le c\left(v^{\ceil{j/2}}\cdot z\right)$. Let $c$ be such.

We begin this part in earnest by applying Lemma \ref{lem:v cdot x^0>C}. However, rather than applying it to the particular four-environment system that is the subject of this theorem, we instead apply it to the two-environment system with CRNs associated to $M_i$ and $M_{\widehat\imath}$, and environment Q-matrix
\[
\begin{pmatrix}-1&1\\1&-1\end{pmatrix}.
\]
This tells us that there exists an $x^0\in\ZZ^d_{\ge0}$ such that $(x^0,i)$ is reachable from $(x,i)$ (indeed, reachable entirely by reactions in $\mathcal R_i$ and $\mathcal R_{\widehat\imath}$) and $v^{\ceil{i/2}}\cdot x^0>c\max\{b,v^{\ceil{i/2}}\cdot x\}$, where $c$ is the constant discussed at the beginning of Part 2 and $b$ is the constant given to us by Part 1.

Now consider the set $B=\{(z,j)\in\ZZ^d_{\ge0}\times[4]:v^{\ceil{j/2}}\cdot z\le \max\{b,v^{\ceil{i/2}}\cdot x\}\}$. By choice of $b$, we have that $\mathcal L_\kappa h_\kappa(z,j)>0$ for every $(z,j)\notin B$. But if $(z,j)\in B$ then
\begin{align*}
    \varphi^j\cdot z
    &\le c\left(v^{\ceil{j/2}}\cdot z\right)
    \le c\max\{b,v^{\ceil{i/2}}\cdot x\}.
\end{align*}
But by choice of $x^0$ we have
\[
c\max\{b,v^{\ceil{i/2}}\cdot x\}
<v^{\ceil{i/2}}\cdot x^0
\le \varphi^i\cdot x^0,
\]
where the second step is a consequence of the fact that $\varphi^i=v^{\ceil{i/2}}+\kappa^{-1}u^i$ for $u^i$ a positive vector. It follows that
\begin{align*}
    \varphi^j\cdot z
    &\le c\max\{b,v^{\ceil{i/2}}\cdot x\}
    <\varphi^i\cdot x^0\\
    h_\kappa(z,j)
    &\le 1-\frac1{1+c\max\{b,v^{\ceil{i/2}}\cdot x\}}
    <h_\kappa(x^0,i)
\end{align*}
Therefore, $\sup_{(z,j)\in B} h_\kappa(z,j)<h_\kappa(x^0,i)$. So by Theorem \ref{thm:Lyapunov-transience}, with positive probability the process never returns to $B$ from state $(x^0,i)$. But by construction $(x,i)\in B$, so we see that $(x^0,i)$ is a state reachable from $(x,i)$ from which with positive probability the process never returns to state $(x,i)$. It follows that $(x,i)$ is transient.

To see that $(x,i)$ is actually evanescent, notice that $B$ contains all states of the form $(0,j)$. In particular, every state not in $B$ is transient by above. Therefore, on the event that the process reaches $(x^0,i)$ and then never returns to $B$, the process never enters a recurrent state. By above, this event has positive probability when the process is started from state $(x,i)$, so the state $(x,i)$ is evanescent by Proposition \ref{prop:evanescent}.
\end{proof}

Proposition \ref{prop:ergodic-large-and-small}, in conjunction with Theorems \ref{thm:fast-switching-ergodicity} and \ref{thm:slow-switching-ergodicity}, shows that if we can find $M_1,M_2,M_3,M_4$ all stable such that their partial averages $(M_1+M_2)/2$ and $(M_3+M_4)/2$ are unstable but their full average $(M_1+M_2+M_3+M_4)/4$ is stable again, then we will have a model which is stable for large and small $\kappa$ but unstable for intermediate $\kappa$. As it turns out, such matrices do exist, and they can even be taken to be $2\times 2$.

\begin{example}\label{ex:ergodic-large-and-small}
For some $\kappa,\varepsilon>0$, let $Q$ be the matrix
\[
Q
:=\begin{pmatrix}
    -(1+2\varepsilon) & 1 & \varepsilon & \varepsilon\\
    1 & -1(1+2\varepsilon) & \varepsilon & \varepsilon\\
    \varepsilon & \varepsilon & -(1+2\varepsilon) & 1\\
    \varepsilon & \varepsilon & 1 & -(1+2\varepsilon)\\
\end{pmatrix}
\]
and consider the process randomly switching, with rates given by $\kappa Q$, between the mass-action CRNs
\begin{equation*}
\mathcal{R}_1\colon
\begin{tikzcd}[row sep=3em]
    S_2\arrow[xshift=3]{d}{1}\arrow{r}{6}&S_1+S_2\\
    0\arrow[yshift=3]{r}{1}\arrow[xshift=-3]{u}{1} &  S_1 \arrow[yshift=-3]{l}{15} \arrow[swap]{u}{2}
\end{tikzcd}
\qquad
\mathcal{R}_2\colon
\begin{tikzcd}[row sep=3em]
    S_2\arrow[xshift=3]{d}{1}\arrow{r}{2}&S_1+S_2\\
    0\arrow[yshift=3]{r}{1}\arrow[xshift=-3]{u}{1} &  S_1 \arrow[yshift=-3]{l}{15} \arrow[swap]{u}{6}
\end{tikzcd}
\qquad
\mathcal{R}_3\colon
\begin{tikzcd}[row sep=3em]
    S_2\arrow[xshift=3]{d}{15}\arrow{r}{6}&S_1+S_2\\
    0\arrow[yshift=3]{r}{1}\arrow[xshift=-3]{u}{1} &  S_1 \arrow[yshift=-3]{l}{1} \arrow[swap]{u}{2}
\end{tikzcd}
\qquad
\mathcal{R}_4\colon
\begin{tikzcd}[row sep=3em]
    S_2\arrow[xshift=3]{d}{15}\arrow{r}{2}&S_1+S_2\\
    0\arrow[yshift=3]{r}{1}\arrow[xshift=-3]{u}{1} &  S_1 \arrow[yshift=-3]{l}{1} \arrow[swap]{u}{6}
\end{tikzcd}
\end{equation*}

\end{example}

\begin{cor}
    There exists a choice of $\varepsilon>0$ such that the model from Example \ref{ex:ergodic-large-and-small} is exponentially ergodic for both all $\kappa$ sufficiently large and all $\kappa$ sufficiently small, but such that all states are evanescent for some intermediate choice of $\kappa$.
\end{cor}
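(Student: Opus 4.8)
The plan is to reduce the entire claim to linear algebra and then invoke the machinery already developed. First I would read the six reactions of each $\mathcal R_i$ off the diagrams and compute, via equation \eqref{eq:associated CRN}, the four environment matrices
\begin{align*}
M_1=\begin{pmatrix}-15&6\\2&-1\end{pmatrix},\quad
M_2=\begin{pmatrix}-15&2\\6&-1\end{pmatrix},\quad
M_3=\begin{pmatrix}-1&6\\2&-15\end{pmatrix},\quad
M_4=\begin{pmatrix}-1&2\\6&-15\end{pmatrix}.
\end{align*}
From these I would form the partial averages $A_1=(M_1+M_2)/2=\begin{pmatrix}-15&4\\4&-1\end{pmatrix}$ and $A_2=(M_3+M_4)/2=\begin{pmatrix}-1&4\\4&-15\end{pmatrix}$, and the full average $M=\tfrac14\sum_iM_i=\begin{pmatrix}-8&4\\4&-8\end{pmatrix}$. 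Here the stationary distribution of $Q$ is uniform (since $Q$ is symmetric for every $\varepsilon$), so $M$ is the unweighted average and is independent of $\varepsilon$. Each $M_i$ is Metzler with trace $-16$ and determinant $3>0$, hence Hurwitz stable; each $A_i$ has determinant $-1<0$, hence is Hurwitz unstable; and $M$ has determinant $48>0$, hence is Hurwitz stable. This is exactly the configuration (all $M_i$ stable, partial averages unstable, full average stable) demanded by the discussion preceding the corollary.

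Next I would verify the hypotheses of Proposition \ref{prop:ergodic-large-and-small}. Because $A_1$ and $A_2$ are Hurwitz-unstable Metzler matrices with strictly positive off-diagonal entries, Proposition \ref{prop:dec/inc direction}(ii) supplies vectors $v^1,v^2\in\RR^2_{>0}$ with $v^iA_i\in\RR^2_{>0}$ (concretely one may take $v^1=(1,b)$ and $v^2=(b,1)$ for any $b$ with $15/4<b<4$). With these in hand, Proposition \ref{prop:ergodic-large-and-small} yields a choice of $\varepsilon_0>0$ and $\kappa_0>0$ for which every state $(x,i)$ with $x\ne0$ is evanescent. I would then fix $\varepsilon=\varepsilon_0$ for the remainder of the argument.

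It remains to control the two extremes with $\varepsilon=\varepsilon_0$ fixed. For every $\varepsilon>0$ the matrix $Q$ is an irreducible symmetric generator with uniform stationary distribution, and crucially neither the $M_i$ nor $M$ depend on $\varepsilon$. For small $\kappa$: each $M_i$ is stable, so Proposition \ref{prop:dec/inc direction}(i) gives $v^i\in\RR^2_{>0}$ with $v^iM_i\in\RR^2_{<0}$, whence Theorem \ref{thm:slow-switching-ergodicity} yields exponential ergodicity. For large $\kappa$: $M$ is stable (indeed $vM=(-4,-4)<0$ for $v=(1,1)$), so Theorem \ref{thm:fast-switching-ergodicity} yields exponential ergodicity. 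Since the process cannot be simultaneously evanescent and exponentially ergodic, the value $\kappa_0$ produced above must lie strictly between the small- and large-$\kappa$ ergodic regimes, i.e.\ it is genuinely intermediate. Finally, the state $x=0$ is handled as in the earlier transience examples: an inflow reaction carries $(0,i)$ to a nonzero (hence evanescent) state, so $(0,i)$ is evanescent as well, giving evanescence of every state at $\kappa_0$.

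The genuine content of the argument is outsourced to Proposition \ref{prop:ergodic-large-and-small}, so in this corollary the only real obstacle is bookkeeping: confirming that the $\varepsilon$-independence of $M$ and of the $M_i$ lets the single $\varepsilon_0$ coming from that proposition simultaneously serve all three regimes, and checking that the strictly positive dual vectors $v^1,v^2$ required by the proposition (not merely nonnegative ones) actually exist for $A_1,A_2$. Both points are immediate from the explicit matrices above.
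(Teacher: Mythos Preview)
Your proposal is correct and follows essentially the same approach as the paper: compute the seven matrices, verify via trace/determinant that each $M_i$ and $M$ are Hurwitz stable while $A_1,A_2$ are Hurwitz unstable, then invoke Theorems \ref{thm:slow-switching-ergodicity}, \ref{thm:fast-switching-ergodicity}, and Proposition \ref{prop:ergodic-large-and-small} respectively, with inflow reactions handling the zero state. Your labels for $M_1,M_2$ (and $M_3,M_4$) are swapped relative to the paper's, but since the argument only uses the pairwise averages and the individual stability, this is immaterial; your added remarks on $\varepsilon$-independence of $M$ and the explicit increasing directions $v^i$ are welcome clarifications the paper leaves implicit.
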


\begin{proof}
Notice that the associated matrices here are
\begin{align*}
    M_1=\begin{pmatrix}-15&2\\6&-1\end{pmatrix} &&&&
    M_2=\begin{pmatrix}-15&6\\2&-1\end{pmatrix} &&&&
    M_3=\begin{pmatrix}-1&2\\6&-15\end{pmatrix} &&&&
    M_4=\begin{pmatrix}-1&6\\2&-15\end{pmatrix}.
\end{align*}
Moreover, their averages are
\begin{align*}
    A_1:=&\frac12(M_1+M_2)=\begin{pmatrix}-15&4\\4&-1\end{pmatrix}\\
    A_2:=&\frac12(M_3+M_4)=\begin{pmatrix}-1&4\\4&-15\end{pmatrix}\\
    M=&\frac14(M_1+M_2+M_3+M_4)=\begin{pmatrix}-8&4\\4&-8\end{pmatrix}.
\end{align*}
All seven of these matrices are irreducible, so by Proposition \ref{prop:dec/inc direction} each has a decreasing direction iff it has two eigenvalues with negative real part and has an increasing direction iff it has at least one eigenvalue with positive real part (see Section \ref{sec:lin-alg} for definitions). But $2\times 2$ matrices, like these seven, whose off-diagonal entries have the same sign can only have real eigenvalues, and so their eigenvalues have the same sign iff the determinant is positive and different signs iff the determinant is negative. But all seven matrices here have negative trace and hence at least one negative eigenvalue, so combining everything in this paragraph we conclude that each matrix has a decreasing direction iff it has positive determinant and has an increasing direction iff it has negative determinant. But $M_1,M_2,M_3,M_4,M$ all can readily be seen to have positive determinant, and $A_1,A_2$ have negative determinant, so we conclude that the former each have a decreasing direction and the latter each have an increasing direction.

Because $M_1,M_2,M_3,M_4$ each has a decreasing direction, Theorem \ref{thm:slow-switching-ergodicity} tells us that for every $\varepsilon$, it is the case that the system is exponentially ergodic for all sufficiently small $\kappa$ (with the threshold for ``sufficiently small" depending, of course, on $\varepsilon$). Similarly, because $M$ has a decreasing direction, Theorem \ref{thm:fast-switching-ergodicity} tells us that for every $\varepsilon$ the system is exponentially ergodic for sufficiently large $\kappa$ (again, depending on $\varepsilon$). Therefore, we just have to show that for some choice of $\varepsilon$ and $\kappa$ every state is evanescent for the system. But since $A_1$ and $A_2$ each has an increasing direction, Proposition \ref{prop:ergodic-large-and-small} tells us that there exists $\varepsilon,\kappa$ such that every state with at least one of either species is evanescent; the existence of inflow reactions means that, in fact, every state is evanescent for such $\varepsilon,\kappa$.
\end{proof}

We now make use of Example \ref{ex:ergodic-large-and-small} by generalizing the proof of Proposition \ref{prop:trans-large-and-small} in order to show that there exist models which undergo arbitrarily many phase transitions from exponentially ergodic to transient and back as $\kappa$ increases, and that moreover such models can be taken to have our preferred behavior for $\kappa$ sufficiently large and sufficiently small.

\begin{prop}\label{prop:arbitrarily-many-phase-transitions}
    Let $N\in\ZZ_{>0}$. Then there exists a model in the \nameref{setting} Setting with mass-action kinetics which undergoes at least $N$ transitions from exponentially ergodic to evanescent and back as $\kappa$ increases. Moreover, we may choose this model to be either evanescent or exponentially ergodic for all sufficiently small $\kappa$, and independent of that choice we may choose the model to be either evanescent or exponentially ergodic for all sufficiently large $\kappa$.
\end{prop}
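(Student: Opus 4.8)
The plan is to build the required model as a union of non-interacting sub-models, each on its own set of species but all driven by a common switching parameter $\kappa$, and to exploit the elementary combination principle already used in the proof of Proposition~\ref{prop:trans-large-and-small}: if several sub-models are placed on disjoint species sets, then the combined process is evanescent at a given $\kappa$ whenever \emph{at least one} sub-model is evanescent, and is exponentially ergodic whenever \emph{every} sub-model is exponentially ergodic. The transience direction is immediate because the marginal law of each sub-model inside the combined process is exactly its stand-alone law (the environment evolves autonomously and the species sets are disjoint), so if one component blows up with positive probability then so does the total count. The ergodicity direction follows by summing the (linear) Lyapunov functions supplied by Theorems~\ref{thm:fast-switching-ergodicity} and~\ref{thm:slow-switching-ergodicity}: writing the combined state as $(x_1,\dots,x_r,\mathrm{env})$ and $V=\sum_b h_b$, the generator splits as $\mathcal L_\kappa V=\sum_b \mathcal L^{(b)}_\kappa h_b$ because each block's reactions touch only its own species and the environment term distributes over the sum, so a simultaneous negative drift for the blocks yields a negative drift for $V$, whence exponential ergodicity via Theorem~\ref{thm:Lyapunov-exponential-ergodicity}.

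The generators of the transitions will be scaled copies of the model from Example~\ref{ex:ergodic-large-and-small}, which is exponentially ergodic for all sufficiently small and all sufficiently large $\kappa$ but evanescent at some intermediate value $\kappa^\ast$. As in Figure~\ref{fig:rescaling}, multiplying all reaction rate constants of a sub-model by a factor $\beta$ while keeping the environment matrix $\kappa Q$ amounts to a global time change and so shifts its entire stability profile by the rule $\kappa\mapsto\kappa/\beta$; thus, if the unscaled model is ergodic for $\kappa<\kappa_1$ and $\kappa>\kappa_2$, the $\beta$-scaled copy is ergodic for $\kappa<\beta\kappa_1$ and $\kappa>\beta\kappa_2$ and evanescent at $\kappa=\beta\kappa^\ast$. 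I would then pick scaling factors $\beta_1<\cdots<\beta_k$ growing geometrically with ratio exceeding $\kappa_2/\kappa_1$, so that the windows $[\beta_l\kappa_1,\beta_l\kappa_2]$ are pairwise disjoint and ordered. On each gap $(\beta_l\kappa_2,\beta_{l+1}\kappa_1)$ every copy sits in one of its two ergodic regimes, so the combined model is ergodic there, while at $\kappa=\beta_l\kappa^\ast$ the $l$-th copy (hence the whole model) is evanescent. Consequently the combined model changes stability type at least twice per window, giving at least $2k$ phase transitions; taking $k=N$ more than suffices. Since all these copies use the same matrix $Q$, they may even share a single four-environment chain, though one may equally take the product of the environment chains.

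To realize the prescribed behavior at the two extremes, I would adjoin one or two further non-interacting blocks whose stability windows are scaled to lie entirely outside the region occupied by the transition-generating copies. For evanescence as $\kappa\to0$ I append a scaled copy of the network of Example~\ref{ex:switching ergodic monomolecular} (evanescent for small $\kappa$, ergodic for large $\kappa$), shrunk so that its entire transient and uncontrolled range falls below the first window; by the union rule this forces evanescence near $0$ without affecting the later windows, where it is ergodic. Symmetrically, for evanescence as $\kappa\to\infty$ I append a scaled copy of Example~\ref{ex:basic switching transient} (ergodic for small $\kappa$, evanescent for large $\kappa$), stretched so its transient range lies above the last window. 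Appending neither, one alone, or both realizes each of the four combinations of extreme behavior while preserving the $\ge N$ interior transitions.

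The main obstacle is not any single calculation but the need to keep the combination principle valid only on regimes where we actually \emph{know} the behavior of each block. The building blocks are guaranteed ergodic (respectively evanescent) only for sufficiently small and sufficiently large $\kappa$, with an uncontrolled intermediate band; the whole construction therefore hinges on choosing the scalings so that, on the range where the transition-generating copies do their work, every auxiliary block is provably in one of its known ergodic regimes, and so that the windows of the transition-generating copies are mutually disjoint and avoid one another's uncontrolled bands. Because the proposition only asks for \emph{at least} $N$ transitions, any additional uncontrolled crossings inside the bounded intermediate bands are harmless; what must be secured is the guaranteed count coming from the disjoint windows together with the sign of the drift at the two extremes. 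Verifying that the summed Lyapunov function indeed certifies ergodicity of the combined chain, and that evanescence of a single component propagates to the whole (including the reduction to the stated support conditions via the inflow reactions present in every block), is the remaining routine but essential bookkeeping.
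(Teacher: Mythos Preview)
Your proposal is correct and follows essentially the same route as the paper: disjoint-species sub-models combined via the ``union is evanescent if any component is, ergodic if all are'' principle, with the transition-generating blocks taken as scaled copies of Example~\ref{ex:ergodic-large-and-small} whose uncontrolled bands are separated by geometric scaling, and the extreme behavior set by optionally appending scaled versions of Examples~\ref{ex:switching ergodic monomolecular} and~\ref{ex:basic switching transient}. The only cosmetic differences are that the paper always includes the two endpoint blocks (using a trivially-ergodic block when ergodic extreme behavior is desired) and insists on a single shared four-environment chain by duplicating two-environment examples across paired states, whereas you allow omitting unneeded endpoint blocks and mention the product-chain alternative.
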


\begin{proof}
We will consider a model in the \nameref{setting} Setting with four environments. Let $Q$ be the matrix
\[
Q
:=\begin{pmatrix}
    -(1+2\varepsilon) & 1 & \varepsilon & \varepsilon\\
    1 & -1(1+2\varepsilon) & \varepsilon & \varepsilon\\
    \varepsilon & \varepsilon & -(1+2\varepsilon) & 1\\
    \varepsilon & \varepsilon & 1 & -(1+2\varepsilon)\\
\end{pmatrix}
\]
where $\varepsilon$ is some fixed number such that the model from Example \ref{ex:ergodic-large-and-small} is transient for at least one value of $\kappa$. Specifically, we consider a model which transitions between environments with rate $\kappa Q$ and which has environment matrices $M_1,M_2,M_3,M_4$ where for each $i$, the matrix $M_i$ has the block diagonal form
\begin{align*}
    M_i=
    \begin{pmatrix}
    M^0_i   &   0               &   0               &   \cdots  &   0\\
    0       &   \beta_1 M^1_i   &   0               &   \cdots  &   0\\
    0       &   0               &   \beta_2 M^2_i   &   \cdots  &   0\\
    \vdots  &   \vdots          &   \vdots          &   \ddots  &   \vdots\\
    0       &   0               &   0           &   \cdots  &   \beta_{N+1}M^{N+1}_i
    \end{pmatrix}
\end{align*}
where the $M^j_i$ are square matrices which we will describe momentarily, the $0$s are (possibly non-square) matrices of various sizes consisting only of zeros, and $\beta_1,\cdots,\beta_{N+1}$ are positive real numbers to be chosen later.

If we wish to make our model evanescent for all sufficiently small $\kappa$, let $M^0_1,M^0_2,M^0_3,M^0_4$ be matrices associated to a $4$-environment \nameref{setting} Setting model with environmental transition rate matrix $\kappa Q$ which is evanescent for sufficiently small $\kappa$ and exponentially ergodic for sufficiently large $\kappa$ (for example, you could take $M^0_1=M^0_2=M_1$ and $M^0_3=M^0_4=M_2$ where $M_1,M_2$ are the matrices from Example \ref{ex:switching ergodic monomolecular}; the result is effectively a $2$-environment system which transitions with rate $2\kappa\varepsilon$ in each direction). If instead we wish to make our model exponentially ergodic for all sufficiently small $\kappa$, let $M^0_1,M^0_2,M^0_3,M^0_4$ be matrices associated to a $4$-environment \nameref{setting} Setting model with environmental transition rate matrix $\kappa Q$ which is exponentially ergodic for all $\kappa$ (for example, you could take $M^0_1=M^0_2=M^0_3=M^0_4$ to be any stable matrix). In either case, let $\kappa^0_{\min}\in[0,\infty)$ be large enough that the model associated to the $M^0_i$ is exponentially ergodic whenever $\kappa>\kappa^0_{\min}$.

Similarly, if we wish to make our model evanescent for all sufficiently large $\kappa$, let $M^{N+1}_i$ be matrices associated to a $4$-environment \nameref{setting} Setting model with environmental transition rate matrix $\kappa Q$ which is evanescent for sufficiently large $\kappa$ and exponentially ergodic for sufficiently small $\kappa$ (like Example \ref{ex:basic switching transient}). And if instead we wish to make our model exponentially ergodic for all sufficiently large $\kappa$, let the $M^{N+1}_i$ be matrices associated to a $4$-environment \nameref{setting} Setting model with environmental transition rate matrix $\kappa Q$ which is exponentially ergodic for all $\kappa$. In either case, let $\kappa^{N+1}_{\max}\in(0,\infty]$ be small enough that the model associated with the $M^{N+1}_i$ is exponentially ergodic whenever $\kappa<\kappa^{N+1}_{\max}$. Note that the model associated to $\beta_{N+1}M^{N+1}_1,\cdots,\beta_{N+1}M^{N+1}_4$ is exponentially ergodic whenever $\kappa<\beta_{N+1}\kappa^{N+1}_{\max}$ (for more detail on this, see the proof of Proposition \ref{prop:trans-large-and-small}, and especially Figure \ref{fig:rescaling}).

Lastly, for each $j=1,\cdots,N$, let $M^j_i$ be matrices associated to a $4$-environment \nameref{setting} Setting model with environmental transition rate matrix $\kappa Q$ which is exponentially ergodic both for sufficiently small $\kappa$ and for sufficiently large $\kappa$, but is evanescent for at least one intermediate $\kappa$ (like Example \ref{ex:ergodic-large-and-small}, for instance). Let $\kappa^j_{\max},\kappa^j_{\min}\in(0,\infty)$ be such that $\kappa^j_{\max}<\kappa^j_{\min}$, and the model associated with the $M^j_i$ is exponentially ergodic whenever either $\kappa<\kappa^j_{\max}$ or $\kappa>\kappa^j_{\min}$. Note that the model associated to the $\beta_jM^j_i$ is exponentially ergodic whenever either $\kappa<\beta_j\kappa^j_{\max}$ or $\kappa>\beta_j\kappa^j_{\min}$, but will be evanescent for at least one $\kappa\in(\beta_j\kappa^j_{\max},\beta_j\kappa^j_{\min})$.

For ease of notation set $\beta_0:=1$, and for $j=1,\cdots,N+1$ recursively fix $\beta_j$ large enough that $\beta_{j-1}\kappa^{j-1}_{\min}<\beta_j\kappa^j_{\max}$. Notice that for each $j\in\{1,\cdots,N+1\}$ and each $\kappa\in(\beta_{j-1}\kappa^{j-1}_{\min},\beta_j\kappa^j_{\max})$, each of the $N+2$ submodels is exponentially ergodic, and hence so is the whole model. However, by construction for each $j\in\{1,\cdots,N\}$ there exists a $\kappa\in(\beta_j\kappa^j_{\max},\beta_j\kappa^j_{\min})$ such that the $j$-th submodel is evanescent, and thus for such $\kappa$ the whole model is evanescent. It follows that the model undergoes at least $N$ transitions from exponentially ergodic to evanescent and back as $\kappa$ increases. To see that the model has the correct behavior for $\kappa$ very small, note that all submodels but the first are exponentially ergodic for sufficiently small $\kappa$, and hence the whole model will have the same stability behavior for small $\kappa$ as the first submodel, which recall was chosen to be either exponentially ergodic or evanescent as desired. The verification for large $\kappa$ is the same.
\end{proof}

\section{Open Problems}\label{sec:Open Problems}

Recall that Theorem \ref{thm:slow-switching-transience} gives evanescence in the case where $\kappa$ is small and there exists vectors $v^i$ which satisfy certain hypotheses, one of which is that $\supp(v^i)$ does not depend on $i$. Example \ref{ex:disjoint species} demonstrates that we cannot simply drop this last hypothesis. Indeed, in that example the two environments have vectors $v^i$ which satisfy all other hypothesis of Theorem \ref{thm:slow-switching-transience}, but $\supp(v^1)\cap\supp(v^2)=\emptyset$ and the conclusion of the theorem is not satisfied. That said, it is possible to imagine a middle ground between all the supports being the same, and them being collectively disjoint. Indeed, it is consistent with Example \ref{ex:disjoint species} to conjecture the following:

\begin{conj}\label{conj: general conjecture}
    In the \nameref{setting} Setting, assume additionally that each CRN is endowed with mass-action kinetics and has at-most-monomolecular reactions. Suppose for each $i$ there exists a vector $v^i\in\RR^d_{\ge0}$ such that $(v^iM_i)_m>0$ for each $m\in\supp(v^i)$, where $(v^iM_i)_m$ is the $m$-th entry of the vector $v^iM_i$. Suppose further that $\bigcap_j \supp(v^j)\ne\emptyset$. Then as long as $\kappa>0$ is sufficiently small, $(x,i)$ is an evanescent state for the mixed Markov chain $Z_\kappa$ whenever $\left(\bigcap_j\supp(v^j)\right)\cap\supp(x)\ne\emptyset$.
\end{conj}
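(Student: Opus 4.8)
The plan is to imitate the two-part structure of the proof of Theorem~\ref{thm:slow-switching-transience}, beginning with the environment-dependent Lyapunov function $h(x,i)=1-\tfrac{1}{1+v^i\cdot x}$, in order to locate precisely where the weakened support hypothesis breaks the argument. As there, I would split each $\mathcal R_i$ into the reactions $\widetilde{\mathcal R}_i$ whose inputs lie in $\supp(v^i)$ and the rest; the latter only increase $h$ and are discarded, while the former contribute a positive drift whose coefficient of $x_m$ is $(v^iM_i)_m>0$ for $m\in\supp(v^i)$. The genuinely new difficulty is the switching term $\sum_{j\ne i}\kappa q_{ij}\big(h(x,j)-h(x,i)\big)=\sum_{j\ne i}\kappa q_{ij}\tfrac{(v^j-v^i)\cdot x}{(1+v^i\cdot x)(1+v^j\cdot x)}$. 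When $\supp(v^i)=\supp(v^j)$ this was harmless because the ratios $v^i_l/v^j_l$ are bounded on the common support; now a species $m\in\supp(v^i)\setminus\supp(v^j)$ can make $v^i\cdot x$ huge while $v^j\cdot x$ stays moderate, yielding a contribution of order $-\kappa/(1+v^j\cdot x)$ that the reaction drift (of order $1/(v^i\cdot x)$) cannot absorb, no matter how small $\kappa$ is. This is exactly the mechanism producing recurrence in Example~\ref{ex:disjoint species}, so the naive function cannot distinguish the collectively-disjoint case from the present one.

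To bring $I:=\bigcap_j\supp(v^j)\ne\emptyset$ into play, I would instead look for a Lyapunov function whose leading term is common to all environments and supported on $I$, with only $\kappa^{-1}$-order environment-dependent corrections, so that the dangerous leading part of the switching term vanishes identically. Concretely one hopes for $h(x,i)=1-\big(1+u\cdot x+\kappa^{-1}w^i\cdot x\big)^{-1}$ with $\supp(u),\supp(w^i)\subseteq I$; then $h(x,j)-h(x,i)=\kappa^{-1}\tfrac{(w^j-w^i)\cdot x}{D_iD_j}$ with $D_i=1+u\cdot x+\kappa^{-1}w^i\cdot x$, so the switching term equals $\sum_{j\ne i}q_{ij}\tfrac{(w^j-w^i)\cdot x}{D_iD_j}$, which is bounded because numerators and denominators now involve only $I$-species. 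A useful structural fact is available here: since each $M_i$ is Metzler (at-most-monomolecular mass-action matrices have nonnegative off-diagonal entries), the drift $(uM_i)\cdot x$ of $u\cdot x$ has $(uM_i)_l\ge0$ for every $l\notin I$, so the only possibly-negative contributions come from $l\in I$. Hence the whole scheme would go through provided one can choose $u$ (and the $w^i$) so that $(uM_i)_l>0$ for all $l\in I$ and all $i$ simultaneously.

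That last requirement is the main obstacle, and I expect it to be where the real difficulty lies. The hypotheses furnish, for each $i$, a certificate $v^i$ for the \emph{entire} unstable block $\supp(v^i)$, but the growth of the common species may genuinely depend on the non-common species $\supp(v^i)\setminus I$, which are effectively reset at environment switches; consequently no single $u$ supported on $I$ need satisfy $(uM_i)|_I>0$ for all $i$ at once, and there is no purely algebraic choice in general. The same tension reappears in the reachability step analogous to Part~2 of Theorem~\ref{thm:slow-switching-transience}, where one must drive $u\cdot x$ large by reactions rather than merely $v^i\cdot x$. Closing the gap therefore seems to require going beyond a single Lyapunov function: either (i) an averaging / two-timescale argument showing that, once the fast intra-environment dynamics of the $\supp(v^i)\setminus I$ species have equilibrated, the conditional drift of a suitable $I$-functional over one slow environment cycle is strictly positive; or (ii) a direct escape estimate proving that, started from a large value of $\sum_{m\in I}x_m$, the process reaches a still larger value before returning to a bounded set with probability bounded away from zero. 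Either route would then feed into Theorem~\ref{thm:Lyapunov-transience} and Proposition~\ref{prop:evanescent}, exactly as in the proof of Theorem~\ref{thm:slow-switching-transience}, to upgrade transience to evanescence for the claimed states.
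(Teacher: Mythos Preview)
The statement you are attempting is a \emph{conjecture}: the paper explicitly leaves it open (Section~\ref{sec:Open Problems}) and provides no proof. So there is no ``paper's own proof'' to compare against, and your submission is, appropriately, not a proof but an outline of obstructions together with some speculative fixes.

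Your diagnosis of why the Theorem~\ref{thm:slow-switching-transience} argument fails is correct and coincides with what the paper records. The paper makes the failure rigorous in Proposition~\ref{prop:function doesn't work}: for the concrete Example~\ref{ex:IDK man} (which satisfies all hypotheses of the conjecture), the function $h(x,i)=1-(1+v^i\cdot x)^{-1}$ has states with $v^i\cdot x$ arbitrarily large yet $\mathcal L_\kappa h(x,i)<0$, for every $\kappa$. Your informal explanation --- that a species in $\supp(v^i)\setminus\supp(v^j)$ can push $v^i\cdot x$ to infinity while $v^j\cdot x$ stays bounded, producing a switching contribution the reaction drift cannot absorb --- is exactly the mechanism exhibited there, and the paper explicitly concludes that ``techniques beyond those used in this paper will be needed.''

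Your proposed repair goes a step beyond the paper's discussion, and is worth recording as a direction rather than a result. Replacing $v^i$ by a common $u$ with $\supp(u)\subseteq I=\bigcap_j\supp(v^j)$, plus $\kappa^{-1}$-order corrections $w^i$ supported in $I$, does kill the unbounded part of the switching term, and your observation that the Metzler property of the $M_i$ forces $(uM_i)_l\ge 0$ for $l\notin I$ is correct and useful. But the residual requirement you isolate --- that a single $u$ supported in $I$ satisfy $(uM_i)_l>0$ for all $l\in I$ and all $i$ --- is not implied by the hypotheses, and you say so. Neither of your two suggested escape routes (a two-timescale averaging argument over one slow environment cycle, or a direct hitting-probability estimate for $\sum_{m\in I}x_m$) is carried out, so the proposal remains a plan, not a proof. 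That is consistent with the paper's own assessment that Conjecture~\ref{conj: general conjecture}, and even its special case Conjecture~\ref{conj: specific conjecture}, are open.
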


We are not able to prove or disprove this conjecture. Moreover, the mixed process in the following example would be transient for small $\kappa$ if the conjecture were true, but even for this example we are not able to determine transience for small $\kappa$.

\begin{example}\label{ex:IDK man}
Let $\mathcal R_1$ and $\mathcal R_2$ be the mass-action CRNs described below, for some $\alpha>0$ to be chosen later:
    \begin{center}
    $\mathcal R_1$:
    \begin{tikzcd}
        S_1\arrow{r}{1}         &   4S_1+S_2\\
        S_2\arrow{r}{1}         &   S_1\phantom{+4S_2}\\
        S_3\arrow[yshift=3]{r}{\alpha}    &   0\phantom{+42S_2}\arrow[yshift=-3]{l}{1}
    \end{tikzcd}
    \qquad\qquad$\mathcal R_2$:
    \begin{tikzcd}
        S_1\arrow[yshift=3]{r}{\alpha}    &   0\phantom{+42S_2}\arrow[yshift=-3]{l}{1}\\
        S_2\arrow{r}{1}         &   S_3\phantom{+4S_2}\\
        S_3\arrow{r}{1}         &   4S_3+S_2
    \end{tikzcd}
    \end{center}
Consider the mixed process $Z_\kappa$ which transitions between environments 1 and 2 with rate $\kappa$ in each direction, and evolves according to $\mathcal R_i$ in environment $i$. Note that $Z_{\kappa}$ is irreducible for all positive $\kappa$.
\end{example}

\begin{prop}
In Example \ref{ex:IDK man}, if Conjecture \ref{conj: general conjecture} holds then for each fixed value of $\alpha>0$, for all small enough values of $\kappa$ every state is evanescent for $Z_\kappa$.
\begin{proof}
    The matrices associated to $\mathcal R_1$ and $\mathcal R_2$ are given by
    \begin{align*}
        M_1=\begin{pmatrix}3&1&0\\1&-1&0\\0&0&-\alpha\end{pmatrix}
        \qquad
        M_2=\begin{pmatrix}-\alpha&0&0\\0&-1&1\\0&1&3\end{pmatrix},
    \end{align*}
    respectively. If $v^1:=(2,1,0)$ and $v^2:=(0,1,2)$, then one has $v^1M_1=(7,1,0)$ and $v^2M_2=(0,1,7)$. Therefore, it would follow from Conjecture \ref{conj: general conjecture} that for all small enough values of $\kappa$, all states with at least one molecule of $S_2$ were evanescent. But the existence of an inflow reaction for $S_1$ as well as a reaction taking $S_1$ as input and producing $S_2$ means that from every state, it is possible to reach another state with at least one $S_2$. Thus given Conjecture \ref{conj: general conjecture} every state would be evanescent for small $\kappa$, as claimed.
\end{proof}
\end{prop}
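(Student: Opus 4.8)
The plan is to check that the model of Example~\ref{ex:IDK man} satisfies every hypothesis of Conjecture~\ref{conj: general conjecture}, and then to upgrade the evanescence that the conjecture supplies for a subset of states to evanescence of all states. First I would read off the matrices $M_1$ and $M_2$ associated to $\mathcal R_1$ and $\mathcal R_2$ via equation~\eqref{eq:associated CRN}. Each mass-action reaction $y\to y'$ with at-most-monomolecular (hence linear) rate contributes its net change $y'-y$ scaled by its single input species, while the two inflow reactions $0\to S_3$ in $\mathcal R_1$ and $0\to S_1$ in $\mathcal R_2$ are constant-rate and so land in the $o(\norm{x}_{\ell^1})$ term. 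Reading off the linear parts gives the two displayed matrices $M_1$ and $M_2$.

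Next I would produce the witness vectors and verify the algebraic hypotheses. Taking $v^1=(2,1,0)$ and $v^2=(0,1,2)$, a direct multiplication yields $v^1M_1=(7,1,0)$ and $v^2M_2=(0,1,7)$. Since $\supp(v^1)=\{1,2\}$ and $\supp(v^2)=\{2,3\}$, the requirement $(v^iM_i)_m>0$ for every $m\in\supp(v^i)$ holds in both environments. The decisive point is the support condition of the conjecture: $\bigcap_j\supp(v^j)=\{1,2\}\cap\{2,3\}=\{2\}$ is nonempty, even though the two supports are not equal (so that Theorem~\ref{thm:slow-switching-transience} itself does not apply). Granting Conjecture~\ref{conj: general conjecture}, I may then conclude that for all sufficiently small $\kappa$ every state $(x,i)$ with $\{2\}\cap\supp(x)\ne\emptyset$, i.e.\ every state carrying at least one molecule of $S_2$, is evanescent.

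It remains to pass from these states to all states. Since $Z_\kappa$ is irreducible for every $\kappa>0$, any state $(x,i)$ can reach a state with a molecule of $S_2$: switch (using irreducibility of $Q$) into environment~$2$ and fire the inflow $0\to S_1$ to guarantee $S_1\ge1$, then switch into environment~$1$ and fire $S_1\to 4S_1+S_2$. By the strong Markov property, reaching an evanescent state with positive probability and then escaping to infinity from it with positive probability forces the starting state itself to be evanescent; equivalently, one may invoke that evanescence is a class property and that for the irreducible chain $Z_\kappa$ evanescence coincides with transience, as recorded in Section~\ref{sec:Notation}. Either way every state is evanescent, as claimed.

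Because the whole argument is conditional on Conjecture~\ref{conj: general conjecture}, I expect no genuine obstacle; the only step demanding care is aligning the conjecture's conclusion, which guarantees evanescence only for states meeting $\bigcap_j\supp(v^j)$, with the stronger \emph{every state} assertion, and this is precisely what the irreducibility argument of the last paragraph provides.
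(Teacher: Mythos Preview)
Your proposal is correct and follows essentially the same approach as the paper: compute $M_1,M_2$, exhibit $v^1=(2,1,0)$ and $v^2=(0,1,2)$ with $v^iM_i=(7,1,0)$ and $(0,1,7)$, apply the conjecture to get evanescence of states with an $S_2$ molecule, and then use reachability (inflow of $S_1$ in environment~2 followed by $S_1\to 4S_1+S_2$ in environment~1) to conclude for all states. The paper's version is slightly terser about the reachability step, but the content is identical.
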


A special case, then, of Conjecture \ref{conj: general conjecture} which is perhaps more tractable is the following:

\begin{conj}\label{conj: specific conjecture}
    There exists $\alpha>0$ such that the mixed process from Example \ref{ex:IDK man} is transient for all small enough values of $\kappa$.
\end{conj}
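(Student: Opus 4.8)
The plan is to exploit the structural feature that species $S_2$ is the unique species common to the two unstable subsystems: in $\mathcal R_1$ the pair $(S_1,S_2)$ is governed by the block $\left(\begin{smallmatrix}3&1\\1&-1\end{smallmatrix}\right)$, in $\mathcal R_2$ the pair $(S_2,S_3)$ is governed by $\left(\begin{smallmatrix}-1&1\\1&3\end{smallmatrix}\right)$, and both blocks share the dominant eigenvalue $1+\sqrt5>0$. Species $S_2$ acts as a carrier that is handed between the two unstable pairs at each switch, while the inactive helper ($S_3$ in environment $1$, $S_1$ in environment $2$) relaxes to its $O(1/\alpha)$ inflow equilibrium. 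Since $\supp(v^1)=\{1,2\}\ne\{2,3\}=\supp(v^2)$, Theorem \ref{thm:slow-switching-transience} does not apply, and in fact no fixed pair $\varphi^1,\varphi^2$ of non-negative vectors of full support can satisfy the drift condition of that theorem: any $\varphi^1$ with $\varphi^1_3>0$ has $(\varphi^1 M_1)_3=-\alpha\varphi^1_3<0$ because $S_3$ decays in environment $1$. So the single-Lyapunov-function method underlying the paper's transience theorems is genuinely blocked, and a multiscale argument on the environment sojourns seems necessary.

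First I would reduce to a favourable starting configuration. By Corollary \ref{cor:one-environment-lemma} applied within a single environment, from any state with $x_2>0$ the process reaches, with positive probability, a state in which $v^i\cdot x$ is as large as desired; so it suffices to prove that from all sufficiently large states the process escapes to infinity with positive probability. I would then track the scalar score $\Phi_n:=v^{i_n}\cdot X(\tau_n^-)$ sampled at the switching times $\tau_1<\tau_2<\cdots$, where $i_n$ is the environment active on $(\tau_{n-1},\tau_n)$. Two ingredients drive the argument. The within-sojourn growth: on the generator level, $v^i\cdot X$ has nonnegative drift $(v^iM_i)\cdot x$, positive off the dead set (for $i=1$ this is $7x_1+x_2$), so it is a submartingale during each sojourn; after an initial transient the trajectory aligns with the dominant eigenvector and $v^i\cdot X$ grows like $e^{(1+\sqrt5)T}$ over a sojourn of length $T$, and since $\E[T]=1/\kappa\to\infty$ as $\kappa\to0$ the expected multiplicative growth per sojourn is arbitrarily large. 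The handoff: once aligned one has $x_2=(\sqrt5-2)x_1$ in environment $1$ (resp.\ $x_2=(\sqrt5-2)x_3$ in environment $2$), and since both $v^1$ and $v^2$ assign weight $1$ to $x_2$, the score drops by at most a bounded multiplicative factor (here $1-2/\sqrt5$) across each switch. Combining the two, $\log\Phi_n$ acquires a positive per-step drift for $\kappa$ small enough, which by a law-of-large-numbers argument forces $\Phi_n\to\infty$ with positive probability. Since $Z_\kappa$ is irreducible, exhibiting a single transient state in this way establishes transience of the whole chain, which is the conjectured conclusion.

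The hard part — and the reason the conjecture is open — is making the within-sojourn step uniform and ruling out collapse during the handoff. In environment $2$ the pair $(S_2,S_3)$ has no inflow, so $(0,0)$ is absorbing for that block; upon entering environment $2$ the new helper $S_3$ must be rebuilt out of $S_2$, and during this transient $x_2$ can temporarily decrease. One therefore has to show that, uniformly over cycles, the carrier never becomes small enough for the stochastic dynamics to be absorbed at the dead state, that is, that ignition of the incoming unstable pair is overwhelmingly likely. This is a bootstrapping difficulty: reliable growth requires the state to be already large, yet it is precisely the growth that is supposed to keep it large. Quantifying this would require controlling the fluctuations of the chain over the long, state-dependent sojourn times (for instance via a branching-process comparison or a functional law of large numbers valid on growing scales) together with a rigorous proof of eigenvector alignment at the switching instants; neither is supplied by the estimates in this paper, and closing this gap is exactly what Conjecture \ref{conj: specific conjecture} asks for.
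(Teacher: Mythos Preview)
This statement is not proved in the paper: it is labeled a conjecture and sits in the Open Problems section, where the authors explicitly say they are unable to prove or disprove it (or the more general Conjecture~\ref{conj: general conjecture}). There is therefore no paper proof to compare against.

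Your proposal is not a proof either, and to your credit you say so clearly in the final paragraph. What you have written is a well-motivated heuristic programme: track a score $\Phi_n=v^{i_n}\cdot X(\tau_n^-)$ at switching times, argue that within a long sojourn the score grows like $e^{(1+\sqrt5)T}$ after alignment with the dominant eigenvector, and that the handoff costs only a bounded multiplicative factor because $S_2$ is common to both $v^1$ and $v^2$. Your eigenvalue and ratio computations are correct, and your diagnosis of why the paper's Lyapunov method fails---any $\varphi^1$ with $\varphi^1_3>0$ has $(\varphi^1M_1)_3=-\alpha\varphi^1_3<0$---is exactly the content of the paper's Proposition~\ref{prop:function doesn't work}, arrived at by a slightly cleaner route than the explicit state construction there.

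The genuine gap you identify is the right one: the within-sojourn growth estimate is a mean-field statement, and to turn $\log\Phi_n$ into a random walk with positive drift you need uniform control of the stochastic fluctuations over sojourns of random length $\sim 1/\kappa$, together with a quantitative alignment statement at the switching instants and a guarantee that the carrier $x_2$ does not hit zero during the ignition transient in environment~2 (where the $(S_2,S_3)$ block has an absorbing state). None of these are supplied, and the paper offers no tools for them. So your write-up is an accurate account of why the problem is open rather than a proof of the conjecture; as such it matches the paper's own assessment.
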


It is worth noting that Conjecture \ref{conj: general conjecture} cannot be proven using the Lyapunov function that was used in the proof of the theorem which it generalizes (Theorem \ref{thm:slow-switching-transience}). Indeed, the proof of that theorem proceed by defining the function $h(x,i)=1-(1+v^i\cdot x)^{-1}$ and showing that $\mathcal L_\kappa h(x,i)$ was positive as long as $v^i\cdot x$ was sufficiently large. In Proposition \ref{prop:function doesn't work}, by contrast, we will show in the context of Example \ref{ex:IDK man} that for all $b$, there exists a state $(x,i)$ such that $v^i\cdot x>b$ but nevertheless $\mathcal L_\kappa h(x,i)<0$. This suggests that even if Conjecture \ref{conj: general conjecture} is true, techniques beyond those used in this paper will be needed to prove it.

\begin{prop}\label{prop:function doesn't work}
    There exists a process in the \nameref{setting} Setting such that additionally:
    \begin{itemize}
        \item each constituent CRN of the process is endowed with mass-action kinetics and has at-most-monomolecular reactions,

        \item for each $i$ there exists a vector $v^i\in\RR^d_{\ge0}$ such that $(v^iM_i)_m>0$ for each $m\in\supp(v^i)$, where $(v^iM_i)_m$ is the $m$-th entry of the vector $v^iM_i$,

        \item $\bigcap_j \supp(v_j)\ne\emptyset$,
    \end{itemize}
    but nevertheless, if $h$ is the function $h(x,i):=1-(1+v^i\cdot x)^{-1}$ then for any choices of $\kappa>0$ and $b$ there exists a state $(x,i)$ such that $v^i\cdot x>b$ but $\mathcal L_\kappa(x,i)<0$.
\begin{proof}
    Consider the mixed process from Example \ref{ex:IDK man}. Notice that if $v^1:=(2,1,0)$ and $v^2:=(0,1,2)$, then one has $v^1M_1=(7,1,0)$ and $v^2M_2=(0,1,7)$. Let $h(x,i)=1-(1+v^i\cdot x)^{-1}$, and notice that
    \begin{align*}
        \mathcal L_\kappa(x,1)
        &=\frac1{1+2x_1+x_2}\left(\frac{\kappa(-2x_1+2x_3)}{1+x_2+2x_3}+\frac{7x_1}{1+2x_1+x_2+7}+\frac{x_2}{1+2x_1+x_2+1}+\frac{\alpha x_3(0)}{1+2x_1+x_2}\right)\\
        &=\frac1{1+2x_1+x_2}\left(\frac{2\kappa(x_3-x_1)}{x_2+2x_3+1}+\frac{7x_1}{2x_1+x_2+8}+\frac{x_2}{2x_1+x_2+2}\right)\\
        &\le\frac1{1+2x_1+x_2}\left(\kappa-\frac{2\kappa x_1}{x_2+2x_3+1}+\frac{7}{2}+1\right).
    \end{align*}
    This upper bound is evidently negative iff the expression inside the parentheses is negative. Moreover, for any fixed values of $\kappa$, $x_2$, and $x_3$, the expression will be negative for all sufficiently large values of $x_1$. The result follows.
\end{proof}
\end{prop}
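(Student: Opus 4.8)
The plan is to exhibit a single concrete model for which the generator computation can be done by hand, and the model from Example~\ref{ex:IDK man} is the natural candidate. First I would take the two mass-action CRNs $\mathcal R_1,\mathcal R_2$ of that example together with the vectors $v^1=(2,1,0)$ and $v^2=(0,1,2)$. Verifying the three bullet-point hypotheses is immediate: every reaction has an at-most-monomolecular source by inspection; a direct multiplication gives $v^1M_1=(7,1,0)$ and $v^2M_2=(0,1,7)$, which are strictly positive on $\supp(v^1)=\{1,2\}$ and on $\supp(v^2)=\{2,3\}$ respectively; and $\bigcap_j\supp(v^j)=\{2\}\ne\emptyset$. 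So the only real work is to produce, for given $\kappa$ and $b$, a state $(x,i)$ with $v^i\cdot x>b$ yet $\mathcal L_\kappa h(x,i)<0$, where $h(x,i)=1-(1+v^i\cdot x)^{-1}$ is the very Lyapunov function from Theorem~\ref{thm:slow-switching-transience}.

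Next I would compute $\mathcal L_\kappa h(x,1)$ explicitly, taking $i=1$ (one environment suffices). The generator splits into the environment-switch term $\kappa(h(x,2)-h(x,1))$ and a sum of reaction terms. The crucial structural observation is that the two reactions touching $S_3$ (the degradation $S_3\to 0$ and the inflow $0\to S_3$) are invisible to $v^1=(2,1,0)$ and hence contribute exactly zero to $\mathcal L_\kappa h(\cdot,1)$; only $S_1\to 4S_1+S_2$ and $S_2\to S_1$ contribute, and both contributions are manifestly positive but bounded. Factoring $(1+v^1\cdot x)^{-1}=(1+2x_1+x_2)^{-1}$ out of every term, the switch term becomes $\kappa(2x_3-2x_1)/(1+x_2+2x_3)$, which carries the decisive negative piece $-2\kappa x_1/(1+x_2+2x_3)$.

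The final step is an elementary estimate. Bounding the two reaction terms by the constants $7/2$ and $1$, and bounding the positive part $2\kappa x_3/(1+x_2+2x_3)$ of the switch term by $\kappa$, I would obtain
\[
\mathcal L_\kappa h(x,1)\le\frac{1}{1+2x_1+x_2}\left(\kappa-\frac{2\kappa x_1}{1+x_2+2x_3}+\frac72+1\right).
\]
Holding $x_2$ and $x_3$ fixed (say $x_2=x_3=0$) and letting $x_1$ grow, the bracketed expression is dominated by the linear-in-$x_1$ term $-2\kappa x_1/(1+x_2+2x_3)$ and so becomes negative once $x_1$ is large, while simultaneously $v^1\cdot x=2x_1+x_2\to\infty$. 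Choosing $x_1$ large enough to make the bracket negative and to force $2x_1>b$ then finishes the proof.

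I do not anticipate a genuine obstacle, since the computation is routine once the right model is in hand. The only conceptual subtlety---and the reason this example is instructive---is understanding \emph{why} the naive Lyapunov function fails here: because $v^1$ sees $S_1$ while $v^2$ does not, a state with $x_1$ huge but $x_3$ small makes $h(\cdot,1)$ sit near its supremum, so switching into environment $2$ (where the relevant coordinate $v^2\cdot x=x_2+2x_3$ is tiny) sharply \emph{decreases} $h$. This is precisely the mechanism that the equal-support hypothesis of Theorem~\ref{thm:slow-switching-transience} rules out, and it is what drives $\mathcal L_\kappa h(\cdot,1)$ negative no matter how large $v^1\cdot x$ is taken to be.
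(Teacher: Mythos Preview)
Your proposal is correct and follows essentially the same approach as the paper's own proof: the same example, the same vectors $v^1,v^2$, the same explicit computation of $\mathcal L_\kappa h(x,1)$, and the same upper bound culminating in the observation that the bracketed term is driven negative by sending $x_1\to\infty$ with $x_2,x_3$ fixed. Your additional commentary on \emph{why} the Lyapunov function fails (the switch from environment $1$ to $2$ collapses $h$ because $v^2$ ignores $S_1$) is a helpful gloss that the paper does not make explicit, but the argument itself is the same.
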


\section*{Appendix}

The section is for results and definitions which are worthy of inclusion but not central for this paper.

\subsection{Lyapunov function results}

For convenience, we state the following result from \cite{Meyn_Tweedie_1993}.

\begin{theorem*}[Theorem 7.1 in \cite{Meyn_Tweedie_1993}]
    Let $X$ be an irreducible continuous-time Markov chain on a countable discrete state space $\mathbb S$ with generator $\mathcal L$. Suppose that there exists a function $V:\mathbb S\to[0,\infty)$ and constants $c\in(0,\infty)$ and $d\in\RR$ such that $\{x\in\mathbb{S}\,:\,V(x)\leq R\}$ is finite for all $R\in [0,\infty)$ and
    \[
    \mathcal LV(x)\le -cV(x)+d
    \]
    for every $x\in\mathbb S$. Then $X$ is exponentially ergodic.
\end{theorem*}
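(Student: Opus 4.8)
The plan is to obtain exponential ergodicity from the geometric drift inequality $\mathcal LV\le -cV+d$ through the standard Foster--Lyapunov methodology for continuous-time chains. First I would convert the infinitesimal drift into an integrated bound. Writing $u(t):=\E_x[V(X_t)]$, the drift condition gives $u'(t)=\E_x[\mathcal LV(X_t)]\le -c\,u(t)+d$, where the interchange of derivative and expectation, together with non-explosion, is justified by a localization argument over the hitting times of the finite sublevel sets $C_R:=\{x\in\mathbb S:V(x)\le R\}$. Multiplying by $e^{ct}$ and integrating --- a one-line Gronwall estimate --- yields
\[
\E_x[V(X_t)]\le e^{-ct}V(x)+\frac{d}{c},
\]
which shows the chain is non-explosive and that $V$ genuinely confines the mass of $P_t(x,\cdot)$.

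Second, I would exploit the hypothesis that each $C_R$ is finite. Because $X$ is irreducible on a countable discrete state space, every finite set --- in particular each $C_R$ --- is a small (petite) set. For $R$ sufficiently large, splitting the right-hand side of the drift inequality according to whether $x\in C_R$ lets one rewrite the condition in the familiar form
\[
\mathcal LV(x)\le -c'V(x)+b\,\mathbf 1_{C_R}(x)
\]
for suitable $c'\in(0,c)$ and $b<\infty$; that is, strictly negative drift off a finite small set. This is precisely the continuous-time geometric drift condition, which delivers positive recurrence, a unique stationary distribution $\pi$, and the finiteness $\pi(V)<\infty$.

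Third, I would invoke the continuous-time geometric-ergodicity theorem: geometric drift toward a petite set implies $V$-uniform, hence total-variation, exponential convergence. Concretely this produces constants $\eta>0$ and $B<\infty$ with $\norm{P_t(x,\cdot)-\pi}_{\mathrm{TV}}\le B\,(V(x)+1)\,e^{-\eta t}$, which is the asserted bound with $f(x)=B(V(x)+1)$.

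The hard part is this third step: upgrading positive recurrence to a \emph{geometric} rate with a uniform exponent $\eta$. In discrete time this rests on the Nummelin splitting/regeneration construction together with a geometric tail bound on return times to the small set; in continuous time one must additionally control the behavior between jumps. The technically delicate point is transferring geometric ergodicity from a sampled skeleton chain (or the resolvent chain) back to the continuous-time semigroup with a rate independent of the sampling, which requires checking that the skeleton inherits both the drift condition and the petiteness and aperiodicity of $C_R$. This is exactly the content packaged into the cited Meyn--Tweedie theorem, so in practice I would isolate these verifications and then appeal to their general machinery rather than rebuild the regeneration argument from scratch.
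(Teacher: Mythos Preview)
The paper does not prove this statement at all: it is stated in the Appendix as a quotation of Theorem~7.1 from Meyn--Tweedie, prefaced by ``For convenience, we state the following result from \cite{Meyn_Tweedie_1993},'' and is then used as a black box in the proof of the subsequent Theorem~\ref{thm:Lyapunov-exponential-ergodicity}. There is therefore no proof in the paper to compare your proposal against.

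That said, your sketch is a reasonable outline of the standard Foster--Lyapunov route to exponential ergodicity: integrate the drift to get a moment bound, observe that finite sublevel sets are petite in an irreducible countable chain, recast the drift as $\mathcal LV\le -c'V+b\mathbf 1_C$ toward a petite set, and then invoke the Meyn--Tweedie machinery (skeleton/resolvent plus regeneration) to upgrade to $V$-uniform geometric convergence. You are right that the substantive work lies entirely in that last step, and you correctly identify that you would be appealing to exactly the theorem being stated rather than proving it from first principles. If the intent were genuinely to \emph{prove} the Meyn--Tweedie theorem rather than reduce to it, the proposal would be circular; if the intent is only to explain why the cited result applies, the sketch is fine but unnecessary, since the paper treats the result as given.
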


We now state a closely related result for continuous-time Markov chains that are not necessarily irreducible. We could not find a proof for this formulation in the literature, so we give one here for completeness. The proof could probably be adapted from that of Theorem 7.1 in \cite{Meyn_Tweedie_1993}, but we propose a shorter proof that makes use of Theorem 7.1 in \cite{Meyn_Tweedie_1993} instead.

\begin{theorem}\label{thm:Lyapunov-exponential-ergodicity}
    Let $X$ be a continuous-time Markov chain on a countable discrete state space $\mathbb S$ with generator $\mathcal L$. Suppose that there exists a function $V:\mathbb S\to[0,\infty)$ and constants $c\in(0,\infty)$ and $d\in\RR$ such that $\{x\in\mathbb{S}\,:\,V(x)\leq R\}$ is finite for all $R\in [0,\infty)$ and
    \[
    \mathcal LV(x)\le -cV(x)+d
    \]
    for every $x\in\mathbb S$. Then $X$ converges exponentially fast.
\begin{proof}
    The continuous-time Markov chain restricted to a closed communicating class can be regarded as an irreducible Markov chain. Hence, $X$ restricted to a closed communicating class is exponentially ergodic by Theorem~7.1 in \cite{Meyn_Tweedie_1993}. Let $A$ be the union of closed communicating classes and let $\tau=\inf\{t\in[0,\infty)\,:\,X(t)\in A\}$. By Theorem~7.3 in \cite{Anderson_Cappelletti_Kim_Nguyen_2020} we have that $\mathbb{E}_x[\tau]<\infty$ for all $x\in\mathbb{S}$. Let $Q$ be the transition rate matrix of $X$ and let $\mu$ be a probability measure on $\mathbb{S}\setminus A$. Define a continuous-time Markov chain $\hat{X}$ on $\mathbb{S}$ with transition rate matrix $\hat{Q}$ given by
    \[
    \hat{Q}(i,j)=\begin{cases}
    \mu(j)\min\left\{1, \dfrac{1}{V(j)}, \dfrac{1}{\mathbb{E}_j[\tau]}\right\}&\text{if }i\in A, j\notin A\\
    Q(i,j) &\text{otherwise}
    \end{cases}
    \]
    for all $i,j\in\mathbb{S}$ with $i\neq j$, where we set $1/0=\infty$. Since $\mathbb{E}_x[\tau]<\infty$ for all $x\in\mathbb{S}$ and there is a positive transition rate from any state $i\in A$ to any state $j\notin A$, the Markov chain $\hat{X}$ is irreducible. Moreover, we can couple the two processes $X$ and $\hat{X}$ in a way that $X(t)=\hat{X}(t)$ for all $t\leq \tau$. With this choice of coupling, we can write $\tau=\inf\{t\in[0,\infty)\,:\,\hat{X}(t)\in A\}$. Finally, let $\hat{\mathcal{L}}$ be the generator of $\hat{X}$. For all $x\notin A$ we have $\hat{\mathcal{L}} V(x)=\mathcal{L}V(x)\leq -cV(x)+d$, and for all $x\in A$
    \begin{align*}
        \hat{\mathcal{L}} V(x)&=\mathcal{L}V(x)+\sum_{j\notin A}\mu(j)\min\left\{1, \dfrac{1}{V(j)}, \dfrac{1}{\mathbb{E}_j[\tau]}\right\}(V(j)-V(x))\\
        &\leq \mathcal{L}V(x)+\sum_{j\notin A}\mu(j)\dfrac{V(j)}{V(j)}-V(x)\sum_{j\notin A}\mu(j)\min\left\{1, \dfrac{1}{V(j)}, \dfrac{1}{\mathbb{E}_j[\tau]}\right\}\\
        &\leq \mathcal{L}V(x)+1\leq -cV(x)+d+1.
    \end{align*}
    Hence, Theorem~7.1 in \cite{Meyn_Tweedie_1993} applies and we can conclude that $\hat{X}$ is exponentially ergodic. Hence, due to \cite[Proposition 12]{kingman1964stochastic}, stated as Theorem 1.1 and rigorously proved in \cite{kim2025path}, there exists $\gamma\in\RR_{>0}$ such that for all $x\in\mathbb S$ we have $\E_x[e^{\gamma\tau_x}]<\infty$, where
    \begin{equation*}
        \tau_x=\inf\{t\in\RR_{>0}\,:\,\hat{X}(t)=x, \hat{X}(s)\neq x \text{ for some } 0\leq s<t\}.
    \end{equation*}
   Let $y\in A$. By irreducibility, given any $x\in \mathbb S$ we have $P_y(\tau_x<\tau_y)>0$. Hence, by strong Markov property
   \begin{align*}
       \infty>\E_y[e^{\gamma\tau_y}]&\geq P_y(\tau_x<\tau_y)\;\E_y[e^{\gamma\tau_x}|\tau_x<\tau_y]\;\E_x[e^{\gamma\tau_y}]\\
       &\geq P_y(\tau_x<\tau_y)\;\E_y[e^{\gamma\tau_x}|\tau_x<\tau_y]\;\E_x[e^{\gamma\tau}]
   \end{align*}
   It follows that $\E_x[e^{\gamma\tau}]<\infty$ for every $x\in \mathbb S$, and the proof is complete.
\end{proof}
\end{theorem}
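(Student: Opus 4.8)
The plan is to verify the two defining properties of \emph{converges exponentially fast} separately. The first property is essentially free: restricted to any closed communicating class $K$, the chain $X$ is irreducible, and since $K$ is closed the restricted generator agrees with $\mathcal L$ on $K$, so the drift inequality $\mathcal L V\le -cV+d$ holds verbatim for the restriction. Theorem~7.1 in \cite{Meyn_Tweedie_1993} then gives that $X$ restricted to $K$ is exponentially ergodic. All the difficulty is concentrated in the second property, namely producing a single $\gamma>0$ for which $\E_x[e^{\gamma\tau}]<\infty$ for every $x$, where $\tau$ is the hitting time of $A$, the union of the closed communicating classes.

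First I would record the weaker fact that $\E_x[\tau]<\infty$ for every $x$. This is the standard consequence of a Foster--Lyapunov drift condition with finite sublevel sets: the drift forces the chain back to the finite set $\{V\le R\}$, from which a closed class must eventually be entered, with expected time controlled by $V(x)$; I would cite the version in \cite{Anderson_Cappelletti_Kim_Nguyen_2020}. The obstacle is that finiteness of the \emph{mean} is far from the required uniform \emph{exponential} moment. A naive supermartingale argument fails precisely because of the additive constant $d$: near $\{V\le R\}$ the drift of $V$ is not negative, so $e^{\gamma t}V(X(t))$ is not a supermartingale there, and the non-irreducibility of $X$ blocks a direct appeal to the sharp irreducible theory in which exponential ergodicity is equivalent to exponential return moments.

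The key device I would use to overcome this is to embed $X$ into an auxiliary \emph{irreducible} chain $\hat X$ that coincides with $X$ up to time $\tau$. Concretely, I would leave all rates of $X$ unchanged except to add, from each state of $A$, small ``restart'' transitions into the transient region $\mathbb S\setminus A$, with rate into $j$ proportional to $\mu(j)\min\{1,1/V(j),1/\E_j[\tau]\}$ for a fixed probability measure $\mu$ on $\mathbb S\setminus A$. The three factors in the minimum each do one job: the $1/V(j)$ factor guarantees that the term added to $\hat{\mathcal L}V(x)$ for $x\in A$ is bounded by $1$, so the drift inequality survives with $d$ replaced by $d+1$; the $1/\E_j[\tau]$ factor together with positivity of the rates makes $\hat X$ irreducible with finite mean return times; and agreement with $X$ before $\tau$ is automatic since the new transitions only fire from $A$. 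Applying Theorem~7.1 in \cite{Meyn_Tweedie_1993} to $\hat X$ then yields exponential ergodicity of $\hat X$, and the equivalence between exponential ergodicity and exponential return moments (\cite{kingman1964stochastic}, proved in \cite{kim2025path}) supplies a uniform $\gamma>0$ with $\E_x[e^{\gamma\tau_x}]<\infty$ for all $x$, where $\tau_x$ is the return time to $x$ in $\hat X$.

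Finally I would transfer this back to the original chain. Fixing any $y\in A$ and using the coupling $X=\hat X$ up to $\tau$, we have $\tau\le\tau_y$, so $\E_x[e^{\gamma\tau}]\le \E_x[e^{\gamma\tau_y}]$; decomposing a return to $y$ through a visit to $x$ via the strong Markov property gives $\E_y[e^{\gamma\tau_y}]\ge P_y(\tau_x<\tau_y)\,\E_y[e^{\gamma\tau_x}\mid \tau_x<\tau_y]\,\E_x[e^{\gamma\tau_y}]$, and since the left side is finite and each factor preceding $\E_x[e^{\gamma\tau_y}]$ is a positive constant, we conclude $\E_x[e^{\gamma\tau}]<\infty$ for every $x$. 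The main obstacle throughout is the construction of $\hat X$: one must simultaneously restore irreducibility, preserve the drift inequality up to a harmless additive constant, and keep the pre-$\tau$ law intact, and it is exactly the careful choice of restart rates---especially the $1/V(j)$ truncation---that makes all three requirements compatible.
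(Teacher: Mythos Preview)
Your proposal is correct and follows essentially the same approach as the paper: restrict to closed classes for part one, then construct the auxiliary irreducible chain $\hat X$ by adding restart transitions from $A$ with rates $\mu(j)\min\{1,1/V(j),1/\E_j[\tau]\}$, verify the drift survives with $d+1$, apply Meyn--Tweedie to $\hat X$, invoke the Kingman/kim2025path equivalence for exponential return moments, and transfer via the strong Markov decomposition through a fixed $y\in A$. The only cosmetic difference is that you spell out the role of each factor in the minimum more explicitly than the paper does.
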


Above, we made use of the following result to prove transience. For a proof of the version given here, see for instance the appendix of \cite{Anderson_Howells_2023}.

\begin{theorem}\label{thm:Lyapunov-transience}
    Let $X$ be a non-explosive continuous-time Markov chain on a countable discrete state space $\mathbb S$ with generator $\mathcal L$. Let $B\subset\mathbb S$, and let $\tau_B$ be the time for the process to enter $B$. Suppose there is some bounded function $V$ such that for all $x\in B^c$,
    \[
    \mathcal LV(x)\ge0.
    \]
    Then $\PP_{x_0}(\tau_B<\infty)<1$ for any $x_0$ such that
    \[
    \sup_{x\in B}V(x)<V(x_0).
    \]
\end{theorem}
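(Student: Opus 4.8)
The plan is to run the standard stopped-submartingale argument, exploiting the fact that the hypothesis $\mathcal LV(x)\ge 0$ on $B^c$ makes $V(X(t))$ a submartingale up until the process enters $B$. First I would fix $x_0$ with $\sup_{x\in B}V(x)<V(x_0)$ and consider the stopped process $V(X(t\wedge\tau_B))$. Since $X$ is non-explosive, $X(t)$ is well-defined for every $t$, and since $V$ is bounded, Dynkin's formula applies with the bounded stopping time $t\wedge\tau_B$ to give
\[
\E_{x_0}\!\left[V(X(t\wedge\tau_B))\right]=V(x_0)+\E_{x_0}\!\left[\int_0^{t\wedge\tau_B}\mathcal LV(X(s))\,ds\right].
\]
For every $s<\tau_B$ we have $X(s)\in B^c$, so $\mathcal LV(X(s))\ge0$ by hypothesis, and hence the integral term is non-negative. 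This yields the key lower bound $\E_{x_0}[V(X(t\wedge\tau_B))]\ge V(x_0)$ for all $t\ge0$.

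Next I would argue by contradiction: suppose $\PP_{x_0}(\tau_B<\infty)=1$. On the event $\{\tau_B<\infty\}$ (which is now almost sure), for all $t$ sufficiently large we have $t\wedge\tau_B=\tau_B$, so $X(t\wedge\tau_B)=X(\tau_B)$, and because $X$ is right-continuous with values in the discrete space $\mathbb S$ the state at the hitting time lies in $B$, i.e.\ $X(\tau_B)\in B$. Thus $V(X(t\wedge\tau_B))\to V(X(\tau_B))$ almost surely as $t\to\infty$. Since $V$ is bounded, the dominated convergence theorem gives
\[
\lim_{t\to\infty}\E_{x_0}\!\left[V(X(t\wedge\tau_B))\right]=\E_{x_0}\!\left[V(X(\tau_B))\right]\le\sup_{x\in B}V(x)<V(x_0).
\]
This contradicts the lower bound $\E_{x_0}[V(X(t\wedge\tau_B))]\ge V(x_0)$ established above, and therefore $\PP_{x_0}(\tau_B<\infty)<1$, as claimed.

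The conceptual core of the argument is elementary once Dynkin's formula is in hand, so the main obstacle is purely in the technical justifications rather than in any clever idea. Specifically, I would need to confirm that Dynkin's formula is valid in this generality: this is where non-explosivity is used (to rule out the process running off to infinity before the relevant time) together with the boundedness of $V$ (which guarantees all the expectations are finite and removes any integrability concerns, so no localization is needed beyond stopping at $\tau_B$). The second point requiring a little care is the claim $X(\tau_B)\in B$, which relies on right-continuity of the piecewise-constant CTMC sample paths together with the discreteness of $\mathbb S$; this ensures the infimum defining $\tau_B$ is attained and the process is genuinely in $B$ at that instant. Both of these are standard facts for continuous-time Markov chains on countable state spaces, so I expect the proof to be short; indeed, for a careful writeup of this formulation one can refer to the appendix of \cite{Anderson_Howells_2023}.
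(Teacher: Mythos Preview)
Your argument is correct and is the standard stopped-submartingale proof via Dynkin's formula. The paper does not actually supply its own proof of this theorem but simply refers the reader to the appendix of \cite{Anderson_Howells_2023}, which is precisely the reference you cite, so your approach coincides with the paper's treatment.
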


\subsection{Linear algebra results}\label{sec:lin-alg}

This section summarizes some definitions and results from linear algebra. No originality is claimed. Our goal is to allow the hypotheses of the main theorems of this paper to be understood in terms that may be more familiar. Specifically, Proposition \ref{prop:dec/inc direction}(i) tells us that a hypothesis of Theorem \ref{thm:fast-switching-ergodicity} (namely, that there exists a $v\in\RR^d_{>0}$ with $vM\in\RR^d_{<0}$) is equivalent to the matrix $M$ being Hurwitz stable (having only eigenvalues with negative real part), and similarly for the matrices $M_i$ in Theorem \ref{thm:slow-switching-ergodicity}. Meanwhile, Proposition \ref{prop:Hurwitz-unstable} tells us that the similar conditions in Theorems \ref{thm:fast-switching-transience} and \ref{thm:slow-switching-transience} are equivalent to the matrices $M$ and $M_i$, respectively, being Hurwitz unstable (having at least one eigenvalue with positive real part).

Let $d\in\ZZ_{>0}$, let $M=(m_{ij})$ be a $d\times d$ matrix and $I\subseteq[d]$ be non-empty. We assume the set $I$ inherits the ordering from $[d]$. Let $M^I$ denote the $|I|\times|I|$ matrix consisting of $m_{ij}$ for $i,j\in I$. We refer to $M^I$ as the \emph{$I$-th principal submatrix of $M$}, or simply as a \emph{principal submatrix} of $M$. Similarly, for $v=(v_1,\cdots,v_d)\in\RR^d$ let $v^I$ be the vector of length $|I|$ consisting of $v_i$ for $i\in I$.

A square matrix $P$ is a \emph{permutation matrix} if exactly one entry in each row of $P$ is $1$, exactly one entry in each column of $P$ is $1$, and all other entries of $P$ are zero. A square matrix $M$ is \emph{reducible} if there exists a permutation matrix $P$ such that
\begin{align*}
    P^\intercal MP=\begin{bmatrix}
        M_{11} & M_{12}\\ 0 & M_{22}
    \end{bmatrix},
\end{align*}
where $M_{11}$ and $M_{22}$ are square matrices and $0$ denotes a (possibly non-square) matrix of all zeros. A square matrix which is not reducible is call \emph{irreducible}. With a suitable permutation any square matrix can be written as a block-upper-triangular matrix where the diagonal matrices are irreducible; that is, for any $M$ there exists a permutation matrix $P$ such that
\begin{align*}
    P^\intercal MP=\begin{bmatrix}
        M_{11} & M_{12} & \cdots & M_{1k}\\
        0 & M_{22} & \cdots & M_{2k}\\
        \vdots & \vdots & \ddots & \vdots\\
        0 & 0 & \cdots & M_{kk}
    \end{bmatrix},
\end{align*}
where each $M_{ii}$ is irreducible. This expression is known as the \emph{irreducible normal form} or \emph{Frobenius normal form} (see \cite[Chapter 8, Section 3, Problem 8]{Horn_Johnson_2013}) of $M$. The Frobenius normal form is unique up to permutations of the blocks and permutations within the blocks. For the existence and uniqueness of Frobenius normal form, see e.g.~\cite[chapter XIII, Sec. 4]{Gantmacher_1959}. Note that each $M_{ii}$ is a (permutation of a) principal submatrix of $M$; if $M^I$ is a principal submatrix of $M$ which can be written as such an $M_{ii}$ then we say $M^I$ is a \emph{Frobenius principal submatrix}, or sometimes a \emph{maximal irreducible principal submatrix}.

A matrix $M=(m_{ij})$ is said to be \emph{Metzler} if $m_{ij}\ge0$ whenever $i\ne j$. A matrix $M$ (not necessarily Metzler) is said to be \emph{Hurwitz stable} if every eigenvalue of $M$ has strictly negative real part; on the other hand, if there exists an eigenvalue of $M$ with strictly positive real part then $M$ is said to be \emph{Hurwitz unstable}.

\begin{remark}\label{rmk:passing to submatrices}
Notice that passing to a principle submatrix preserves the Metzler property. In addition, note that because the Frobenius normal form is block-triangular, the eigenvalues of $M$ will be exactly the collective eigenvaules of the Frobenius principal submatrices of $M$. In particular, $M$ is Hurwitz stable iff every Frobenius principal submatrix of $M$ is Hurwitz stable, and $M$ is Hurwitz unstable iff some Frobenius principal submatrix of $M$ is Hurwitz unstable.
%
\hfill //
\end{remark}

A $d\times d$ matrix $M$ \emph{has an increasing direction} if there exists a $w\in\RR^d_{>0}$ such that $wM\in\RR^d_{>0}$, and such a $w$ is called an \emph{increasing direction for $M$}. Similarly, if there is a $w\in\RR^d_{>0}$ such that $wM\in\RR^d_{<0}$, we say $w$ is a \emph{decreasing direction for $M$} and that $M$ \emph{has a decreasing direction}.

The following proposition characterizes the matrices which have increasing and decreasing directions. Part (i) gives an alternate formulation of the hypotheses of Theorems \ref{thm:fast-switching-ergodicity} and \ref{thm:slow-switching-ergodicity}. Part (ii) will be used in the proof of Proposition \ref{prop:Hurwitz-unstable}.

\begin{prop}\label{prop:dec/inc direction}
Let $M$ be a Metzler matrix.
\begin{itemize}
    \item[(i)] $M$ has decreasing direction iff $M$ is Hurwitz stable iff all Frobenius principal submatrices of $M$ are Hurwitz stable.

    \item[(ii)] $M$ has increasing direction iff all Frobenius principal submatrices of $M$ are Hurwitz unstable.
\end{itemize}
\end{prop}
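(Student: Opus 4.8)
The plan is to derive both parts from Perron--Frobenius theory for Metzler matrices, exploiting throughout the single structural fact that every off-diagonal entry, and hence every off-diagonal block, of a Metzler matrix is nonnegative. Write $\mu(A)$ for the spectral abscissa of a square matrix $A$ (the largest real part of an eigenvalue). For a Metzler $A$, writing $A=N-sI$ with $s$ large enough that $N:=A+sI\ge0$, the Perron--Frobenius theorem applied to $N$ gives that $\mu(A)=\rho(N)-s$ is itself an eigenvalue of $A$ admitting a nonnegative right eigenvector and a nonnegative left eigenvector, and that these are strictly positive (and $\mu(A)$ simple) when $A$ is irreducible. I will also use that if $A$ is Metzler and Hurwitz stable then $\rho(N)<s$, so that $(-A)^{-1}=s^{-1}\sum_{k\ge0}(N/s)^k\ge0$ entrywise, with strictly positive diagonal. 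Note that the second ``iff'' in part (i) is precisely Remark \ref{rmk:passing to submatrices}, so for (i) it remains only to show that $M$ has a decreasing direction iff $M$ is Hurwitz stable.

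For part (i) I would argue both implications using $M$ itself rather than its blocks. Forward: if $w\in\RR^d_{>0}$ is a decreasing direction and $u\ge0$ is a nonzero right eigenvector of $M$ for $\mu(M)$, then $\mu(M)\,(w\cdot u)=w(Mu)=(wM)u<0$, since $wM\in\RR^d_{<0}$ and $u$ is nonnegative and nonzero; as $w\cdot u>0$ this forces $\mu(M)<0$. Converse: if $M$ is Hurwitz stable then $(-M)^{-1}\ge0$ with positive diagonal as above, so fixing any $c\in\RR^d_{>0}$ and setting $w:=c\,(-M)^{-1}$ yields $wM=-c\in\RR^d_{<0}$ while $w_j\ge c_j\,[(-M)^{-1}]_{jj}>0$ for every $j$; thus $w$ is a decreasing direction. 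This settles (i).

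For part (ii) I would first dispatch the irreducible case, which is the only case the paper actually invokes. When $M$ is irreducible there is a single Frobenius principal submatrix, namely $M$, so the claim reads ``$M$ has an increasing direction iff $M$ is Hurwitz unstable.'' The forward direction is the pairing of the previous paragraph with the inequalities reversed ($wM\in\RR^d_{>0}$ gives $\mu(M)>0$), and the converse takes $w$ to be the strictly positive left Perron eigenvector, for which $wM=\mu(M)\,w\in\RR^d_{>0}$. For the general statement I would pass to the Frobenius normal form (permutation conjugation preserves the Metzler property, the spectrum, and the existence of increasing/decreasing directions), so that $M$ is block upper triangular with irreducible diagonal blocks $M_{11},\dots,M_{kk}$ and nonnegative off-diagonal blocks $M_{ij}$ for $i<j$. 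The implication from ``all $M_{jj}$ Hurwitz unstable'' to the existence of an increasing direction is then clean: choosing for each $j$ the strictly positive left Perron eigenvector $\widehat w_j$ of $M_{jj}$ and assembling $w=(\widehat w_1,\dots,\widehat w_k)$, the $j$-th block of $wM$ is $\sum_{i<j}\widehat w_i M_{ij}+\widehat w_j M_{jj}$, which has all entries strictly positive because the first sum is nonnegative and the last term is strictly positive.

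The hard part, and the step I would scrutinize most carefully, is the remaining implication of (ii): that an increasing direction forces \emph{every} Frobenius principal submatrix to be Hurwitz unstable. The naive per-block pairing does not suffice, because the nonnegativity of the off-diagonal blocks can inject positivity into a later coordinate block, so $(wM)_j>0$ need not certify $\mu(M_{jj})>0$ for a non-source block $j$. Indeed, the whole-matrix pairing only delivers $\mu(M)>0$, i.e.\ via Remark \ref{rmk:passing to submatrices} that \emph{some} block is unstable, which is strictly weaker than the ``every block'' conclusion asserted. I would therefore try to exploit the triangular order of the Frobenius form directly --- inducting from the source blocks, for which $(wM)_j=w_jM_{jj}$ already exhibits an increasing direction of $M_{jj}$ and hence instability, and then attempting to propagate the constraint downstream while tracking the upstream contributions to each $(wM)_j$. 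This bookkeeping with the nonnegative off-diagonal blocks is exactly the delicate point: it must rule out a stable downstream block being ``carried'' into apparent instability by an unstable upstream block, and it is the only place where I expect the argument to demand real care rather than a direct Perron--Frobenius computation.
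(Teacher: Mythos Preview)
Your arguments for part (i) and for both directions of (ii) in the irreducible case are correct and more self-contained than the paper's: the paper simply cites Berman--Plemmons and Horn--Johnson for (i), whereas you give a short Perron--Frobenius computation. For the $\Leftarrow$ direction of (ii) in the reducible case your concatenation of left Perron eigenvectors also works as written; the paper does essentially the same but inserts a scaling of the blocks by powers of a large parameter $\gamma$, which your observation that the off-diagonal blocks contribute nonnegatively renders superfluous.

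Your instinct about the remaining $\Rightarrow$ direction of (ii) is not merely well-founded but decisive: the statement is false as written, and the obstruction is exactly the phenomenon you flag. Take the Metzler matrix $M=\left(\begin{smallmatrix}1&1\\0&-1\end{smallmatrix}\right)$, already in Frobenius normal form with diagonal blocks $(1)$ and $(-1)$. Then $w=(2,1)$ is an increasing direction since $wM=(2,1)$, yet the second Frobenius block is Hurwitz stable --- the stable downstream block is indeed ``carried'' by the unstable upstream one. The paper's proof of this direction hides the problem in the step ``Assume w.o.l.g.\ that $i=1$. Otherwise, we choose a different permutation so that the first principal submatrix of $A$ is Hurwitz stable'': a Frobenius block can be permuted to first position while preserving the block-upper-triangular structure only if it is a source in the condensation DAG, which a downstream stable block need not be. What your pairing argument \emph{does} establish is that an increasing direction forces $\mu(M)>0$, i.e.\ that \emph{some} Frobenius block is Hurwitz unstable; a glance at Proposition~\ref{prop:Hurwitz-unstable} shows that this weaker statement, together with the irreducible case of (ii) which you do prove, is all the paper ever uses. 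So the portion you have completed already covers the applications, and there is no missing idea to supply for the stronger claim, because it is not true.
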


\begin{remark}
Proposition \ref{prop:dec/inc direction}(i) is well-known (see the references provided below). We believe that (ii) is also well-known, but we lack a reference and therefore have included a proof.\hfill //
\end{remark}

\begin{proof}[Proof of Proposition \ref{prop:dec/inc direction}]
    The first biimplication in (i) follows from \cite[Theorem 2.3($I_{28}$)]{Berman_Plemmons_1994}; see also \cite[Theorem 2.5.3 (2.5.3.12)]{Horn_Johnson_1991}. The second biimplication in (i) was discussed in Remark \ref{rmk:passing to submatrices}.

    For (ii),  Let $\hat{A}$ be a normal form of $A$: $A=P \hat{A} P^T$ for some permutation matrix $P$.  If there exists $\hat{w}>0$ such that $\hat{w}^T\hat{A}>0$, let $w=P\hat{w}$. Then we have $w>0$ and $w^TA=\hat{w}^TP^TP \hat{A} P^T= \hat{w}^T \hat{A}P^T>0$ (note $\hat{w}^T\hat{A}>0$ and hence reordering the coordinates of a positive vector by a permutation matrix $P^T$ will preserve positivity of the vector). Hence w.l.o.g. assume $A$ is in a normal form. Since principal submatrices $A_{ii}$ are irreducible and Metzler, we have $s(A_{ii})I+A_{ii}$ is nonnegative and hence $\rho(s(A_{ii})I+A_{ii})=r(s(A_{ii})I+A_{ii})=s(A_{ii})+r(A_{ii})$ \cite[Sec.8.3, Problem 9]{Horn_Johnson_2013}. 

    $\Leftarrow$. Since $r(A_{ii})>0$ for all $i=1,\ldots,k$,  note $s(A_{ii})I+A_{ii}$ is nonnegative and irreducible, by Perron-Frobenius Theorem \cite[Theorem~8.4.4]{Horn_Johnson_2013}, there exists a unique (up to a scalar) positive left Perron eigenvector $w_i$ such that
    $$
    w_i^T(s(A_{ii})I+A_{ii})=\rho(s(A_{ii})I+A_{ii})w_i^T=(s(A_{ii})+r(A_{ii}))w_i^T,
    $$
    which implies that $w_i^TA_{ii}=r(A_{ii})w_i^T$. Let  $w=(w_1,\gamma w_2,\cdots,\gamma^{k-1}w_k)^T$. In the light of that the sign of a polynomial for all large values is determined by the sign of the coefficient of its monomial  of highest degree, one can show for all sufficiently large $\gamma>0$,
    $$
    w^TA=\left[w_1^TA_{11},w_1^TA_{12}+\gamma w_2^TA_{22},\cdots,\sum_{j=1}^{\ell}\gamma^{j-1}w_j^TA_{jj},\cdots,\sum_{j=1}^k\gamma^{k-1}w_j^TA_{jj}\right]
    $$
    is positive.
    
    $\Rightarrow$. We prove by contraposition. Suppose $r(A_{ii})\le0$ for some $1\le i\le k$. If $r(A_{ii})<0$, then $A_{ii}$ is a Hurwitz stable Metzler matrix. Assume w.o.l.g. that $i=1$. Otherwise, we choose a different permutation so that the first principal submatrix of $A$ is Hurwitz stable. Let $A_{11}\in\mathcal{M}^+_{n_1\times n_1}(\mathbb{R})$. If there exists $w=(w_1,\ldots,w_k)^T>0$ such that $w^TA>0$, then $w_1^TA_{11}>0$. Consider the $n_1$ dimensional linear ODE $x'(t)=A_{11}x$. Then by linear stability theory of ODE, $r(A_{11})<0$ implies that $\lim_{t\to\infty}x(t)=0$. Now consider $V(x)=(w_1^T\cdot x)^2$. Then $\frac{d}{dt}V(x(t))=2x^Tw_1w_1^TA_{11}x(t)\ge c V(x(t))$ for all $x(t)\in\mathbb{R}^{n_1}_+\setminus\{0\}$, where $c=2\min_{i=1}^{n_1}\frac{(w_1^TA_{11})_i}{(w_1^T)_i}>0$. Hence it implies that $V(x(t))\ge V(x(0))e^{ct}$. This shows  that $\lim_{t\to\infty}\|x(t)\|_1=\infty$ for all $x(0)\in\mathbb{R}^{n_1}_+\setminus\{0\}$. The contradiction  implies that $r(A_{jj})\ge0$ for all $j=1,\ldots,k$.  Hence $r(A_{ii})=0$ for some $1\le i\le k$. Again assume w.o.l.g. that $i=1$. Then $A_{11}-\varepsilon I_{n_1}$ is Hurwitz stable and Metzler for any $\varepsilon>0$, where $I_{n_1}$ is the $n_1$ by $n_1$ identity matrix. If there exists $w=(w_1,\ldots,w_k)^T>0$ such that $w^TA>0$, then $w_1^TA_{11}>0$. Choose $\varepsilon>0$ such that $w_1-\varepsilon\mathbf{1}>0$ and $w_1^TA_{11}-\varepsilon\mathbf{1}^T>0$, where $\mathbf{1}=(1,\ldots,1)\in\mathbb{R}^{n_1}$. Applying the same argument to the Hurwitz stable Metzler matrix $A_{11}-\varepsilon I_{n_1}$ also yields a contradiction. In sum, $r(A_{ii})>0$ for all $1\le i\le k$.
\end{proof}

Lastly, a matrix $N$ is said to be \emph{non-negative} if all entries of $N$ are non-negative. By the Perron--Frobenius theorem, any non-negative matrix has a non-negative real eigenvalue which is at least as large as the modulus of each other eigenvalue of $N$. This eigenvalue will necessarily have the largest real part of any eigenvalue of $N$. We will abuse terminology slightly and refer to this (not necessarily simple) eigenvalue as \emph{the largest eigenvalue of $N$}.

Notice that if $M\in\RR^{d\times d}$ is a Metzler matrix, then for any large enough $\alpha\in\RR$ we have that $N:=M+\alpha I_d$ is non-negative, where $I_d$ denotes the $d\times d$ identity matrix. Since the eigenvalues of $M+\alpha I_d$ are the eigenvalues of $M$ shifted by $\alpha$, we see that $M$, like $N$, will have an a real eigenvalue with the largest real part of any eigenvalue of $M$. As above, we will refer to this eigenvalue as \emph{the largest eigenvalue of $M$}.

\begin{lemma}\label{lem:Metzler largest eigenvalue}
Suppose that $M$ is a $d\times d$ Metzler matrix. Let $r$ denote the largest eigenvalue of $M$. Let $r'$ denote the maximum over $\emptyset\ne I\subsetneq[d]$ of the largest eigenvalue of the (Metzler) matrix $M^I$. Then $r'\le r$.
\begin{proof}
    This lemma, but for specifically non-negative matrices rather than general Metzler matrices, is Proposition 4 in \cite[Chapter XIII, Section 3]{Gantmacher_1959}. To extend to Metzler matrices $M$, let $N:=M+\alpha I_d$ for some $\alpha\in\RR_{>0}$ no smaller in absolute value than all diagonal entries of $M$, so that $N$ is non-negative. Then the largest eigenvalue of $N$ is $r+\alpha$. Moreover, for each $\emptyset\ne I\subseteq[d]$ the largest eigenvalue of $N^I$ is $\alpha$ plus the largest eigenvalue of $M^I$. It follows that the maximum over $I$ of the largest eigenvalue of $N^I$ is $r'+\alpha$. Since the lemma holds for non-negative matrices we have $r'+\alpha\le r+\alpha$; the lemma in full generality follows.
\end{proof}
\end{lemma}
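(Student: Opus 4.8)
The plan is to reduce this to the classical Perron--Frobenius fact for \emph{non-negative} matrices and then transfer to the Metzler case by the same shift trick the paper already exploits. The essential input is the non-negative version of the statement: if $N$ is a $d\times d$ non-negative matrix and $\emptyset\ne I\subsetneq[d]$, then the largest eigenvalue (equivalently, the spectral radius $\rho$) of $N^I$ is at most that of $N$. First I would record this non-negative version, either by citing Gantmacher as the surrounding discussion does, or by giving the short self-contained argument sketched next.

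To prove the non-negative version directly, I would embed $N^I$ back into a $d\times d$ matrix $\widetilde N$ by placing the entries of $N^I$ in the rows and columns indexed by $I$ and setting every other entry to zero. Padding with zero rows and columns only introduces additional zero eigenvalues, so $\rho(\widetilde N)=\rho(N^I)$. Moreover $0\le\widetilde N\le N$ entrywise, and for non-negative matrices the spectral radius is monotone under entrywise domination (a standard Perron--Frobenius consequence). Hence $\rho(N^I)=\rho(\widetilde N)\le\rho(N)$, which is exactly the claim for $N$.

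The second step is the shift, and it is pure bookkeeping. I would choose $\alpha\in\RR_{>0}$ at least as large in absolute value as every diagonal entry of $M$, so that $N:=M+\alpha I_d$ is non-negative. Translating by $\alpha I_d$ shifts the whole spectrum of $M$ by $\alpha$, so the largest eigenvalue of $N$ is $r+\alpha$; and since principal submatrices commute with the shift, $N^I=M^I+\alpha I_{|I|}$, whence the largest eigenvalue of $N^I$ is $\alpha$ plus the largest eigenvalue of $M^I$. Taking the maximum over admissible $I$ shows that the maximum over $I$ of the largest eigenvalue of $N^I$ equals $r'+\alpha$. Applying the non-negative version to $N$ then gives $r'+\alpha\le r+\alpha$, and cancelling $\alpha$ yields $r'\le r$.

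The only genuine content lies in the non-negative case, so that is where I expect the sole obstacle to be; the shift and the reduction are routine. If one declines to simply cite the non-negative statement, the two points deserving a line of care are that the padded matrix $\widetilde N$ shares the spectral radius of $N^I$, and that entrywise domination of non-negative matrices forces domination of spectral radii---both immediate from Perron--Frobenius theory.
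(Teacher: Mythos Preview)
Your proposal is correct and follows essentially the same route as the paper: cite (or prove) the non-negative case via Perron--Frobenius, then shift $M$ by $\alpha I_d$ to reduce to that case. The only addition beyond the paper's proof is your optional self-contained sketch of the non-negative version via zero-padding and entrywise monotonicity of the spectral radius, which the paper simply cites from Gantmacher.
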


The following proposition is one of the main results of this subsection. Condition (i) below appears in the hypotheses of Theorems \ref{thm:fast-switching-transience} and \ref{thm:slow-switching-transience}. Conditions (ii), (iii), and especially (iv) provide an alternate way of viewing condition (i).

\begin{prop}\label{prop:Hurwitz-unstable}
Let $M=(m_{ij})$ be a $d\times d$ Metzler matrix. Then the following are equivalent:
\begin{enumerate}
    \item[(i)] There exists some nonzero $v\in\RR^d_{\ge0}$ such that $(vM)_i$, the $i$-th entry of the vector $vM$, is strictly positive whenever $v_i>0$.
    
    \item[(ii)] There exists a principal submatrix of $M$ which has an increasing direction.
    
    \item[(iii)] There exists a principal submatrix of $M$ which is Hurwitz-unstable.

    \item[(iv)] $M$ is Hurwitz-unstable.
\end{enumerate}
\end{prop}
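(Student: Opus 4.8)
The plan is to prove the four conditions equivalent by establishing the cycle $(i)\Rightarrow(ii)\Rightarrow(iii)\Rightarrow(iv)\Rightarrow(i)$, relying throughout on three facts already available: that the Metzler property is inherited by principal submatrices, that a Metzler matrix has an increasing direction iff all of its Frobenius principal submatrices are Hurwitz unstable (Proposition \ref{prop:dec/inc direction}(ii)), and that the largest eigenvalue of a principal submatrix of a Metzler matrix never exceeds the largest eigenvalue of the whole matrix (Lemma \ref{lem:Metzler largest eigenvalue}). A useful consequence of Proposition \ref{prop:dec/inc direction}(ii) that I will invoke repeatedly is that an \emph{irreducible} Metzler matrix has an increasing direction iff it is Hurwitz unstable, since its only Frobenius principal submatrix is itself.

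The equivalence $(i)\Leftrightarrow(ii)$ is the heart of the bookkeeping, and hinges on a single identity. Writing $I:=\supp(v)$, the fact that $v_k=0$ for $k\notin I$ gives, for every $i\in I$,
\[
(vM)_i=\sum_{k\in I}v_kM_{ki}=(v^IM^I)_i.
\]
For $(i)\Rightarrow(ii)$: given the nonzero $v\in\RR^d_{\ge0}$ of condition (i), the restriction $v^I$ is strictly positive (as $I=\supp(v)$) and the displayed identity shows $v^IM^I\in\RR^{|I|}_{>0}$, so $v^I$ is an increasing direction for the principal submatrix $M^I$. Conversely, for $(ii)\Rightarrow(i)$, if $w\in\RR^{|I|}_{>0}$ is an increasing direction for some $M^I$, I extend it by zeros to a vector $v\in\RR^d_{\ge0}$ with $\supp(v)=I$; the same identity gives $(vM)_i=(wM^I)_i>0$ for each $i\in\supp(v)$, which is exactly condition (i).

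For the remaining arc, $(ii)\Rightarrow(iii)$ follows because an increasing direction for $M^I$ forces, via Proposition \ref{prop:dec/inc direction}(ii), every Frobenius principal submatrix of $M^I$ to be Hurwitz unstable, whence $M^I$ itself is Hurwitz unstable by Remark \ref{rmk:passing to submatrices}; thus $M^I$ is a Hurwitz-unstable principal submatrix of $M$. Then $(iii)\Rightarrow(iv)$ is immediate from Lemma \ref{lem:Metzler largest eigenvalue}: a Hurwitz-unstable principal submatrix has positive largest eigenvalue, and the largest eigenvalue of $M$ is at least as large (trivially so if the submatrix is $M$ itself), so $M$ is Hurwitz unstable. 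Finally, to close with $(iv)\Rightarrow(i)$, I pass to the Frobenius normal form: by Remark \ref{rmk:passing to submatrices} some Frobenius principal submatrix $M^J$ is Hurwitz unstable, and since $M^J$ is irreducible Metzler it has an increasing direction $w\in\RR^{|J|}_{>0}$ with $wM^J\in\RR^{|J|}_{>0}$; extending $w$ by zeros exactly as in $(ii)\Rightarrow(i)$ produces the vector required by condition (i).

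I do not expect a genuine obstacle here: the proposition is essentially an assembly of the previously established linear-algebra facts, and the only place demanding care is the left-multiplication convention together with the attendant identity $(vM)_i=(v^IM^I)_i$ on $\supp(v)$, which is what lets condition (i)—a statement about the ambient matrix $M$ constrained only on the support of $v$—be translated faithfully into a statement about the principal submatrix $M^I$ and back.
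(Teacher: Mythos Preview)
Your proof is correct and takes essentially the same approach as the paper: both rely on the identity $(vM)_i=(v^IM^I)_i$ for $i\in\supp(v)$ to pass between (i) and (ii), on Proposition~\ref{prop:dec/inc direction}(ii) and Remark~\ref{rmk:passing to submatrices} to link (ii) with (iii), and on Lemma~\ref{lem:Metzler largest eigenvalue} for the step from (iii) to (iv). The only cosmetic difference is organization---you close a cycle $(i)\Rightarrow(ii)\Rightarrow(iii)\Rightarrow(iv)\Rightarrow(i)$, whereas the paper proves three biconditionals $(i)\Leftrightarrow(ii)$, $(ii)\Leftrightarrow(iii)$, $(iii)\Leftrightarrow(iv)$ separately---but the ingredients are identical.
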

\begin{proof}
Suppose that $I\subseteq[d]$ is nonempty, and suppose $v\in\RR^d_{\ge0}$ satisfies $I=\supp(v)$. Then for all $j\in I$,
\begin{align}\label{eq:(vM)_j}
    (vM)_j
    =\sum_{i=1}^d v_i m_{ij}
    =\sum_{i\in I}v_i m_{ij}
    =(v^IM^I)_j.
\end{align}

\emph{(i) $\implies$ (ii):} Suppose condition (i) is satisfied, and let $v$ be the vector given by the condition. Define $I:=\supp(v)$, and let $w=v^I$. Then $I$ is nonempty since $v$ is nonzero by assumption, and by assumption on $vM$ and the definition of $I$ we have that $(vM)_j>0$ whenever $j\in I$. But it then follows from equation \eqref{eq:(vM)_j} that the vector $wM^I$ is strictly positive, so $w$ is an increasing direction for $M^I$.

\emph{(ii) $\implies$ (i):} Suppose that $I$ is nonempty and that $w$ is an increasing direction for $M^I$. Define $v\in\RR^d_{\ge0}$ via $v_i=w_i$ for $i\in I$ and $v_i=0$ for $i\notin I$. Then $I=\supp(v)$ by construction, so equation \eqref{eq:(vM)_j} tells us that $(vM)_j$ is strictly positive whenever $v_j>0$.

\emph{(ii) $\implies$ (iii):} Suppose that $I$ is nonempty and $M^I$ has an increasing direction. Then by Proposition \ref{prop:dec/inc direction}(ii), all Frobenius principal submatrices of $M^I$ are Hurwitz unstable. It follows that $M^I$ is Hurwitz unstable (see, e.g., Remark \ref{rmk:passing to submatrices}).

\emph{(iii) $\implies$ (ii):} Suppose that $J$ is nonempty and $M^J$ is Hurwitz unstable; that is, $M^J$ has an eigenvalue with strictly positive real part. Consider the Frobenius normal form of $M^J$. As discussed in Remark \ref{rmk:passing to submatrices}, the eigenvalues of $M^J$ are exactly the eigenvalues of the Frobenius principal submatrices of $M^J$, so there must exist a nonempty $I\subseteq J$ such that the Frobenius principal submatrix $(M^J)^I=M^I$ has an eigenvalue with strictly positive real part. But then $M^I$ is an irreducible Hurwitz-unstable matrix, so by Proposition \ref{prop:dec/inc direction}(ii) we conclude that $M^I$ has an increasing direction.

\emph{(iv) $\implies$ (iii):} Obvious ($M$ is a principal submatrix of itself).

\emph{(iii) $\implies$ (iv):} Following the notation of Lemma \ref{lem:Metzler largest eigenvalue}, let $r$ denote the largest eigenvalue of $M$ and let $r'$ denote the maximal largest eigenvalue of a proper submatrix of $M$. Suppose there exists a Hurwitz unstable submatrix of $M$. Either this submatrix is all of $M$ and we are done, or the submatrix is included in the maximum defining $r'$ and so $0<r'$. But by Lemma \ref{lem:Metzler largest eigenvalue} we know $r'\le r$, so $0<r$ and we are done.
\end{proof}

\bibliography{Stochastic_Environments}{}
\bibliographystyle{plain}

\end{document}